\documentclass[11pt,reqno]{amsart}
\usepackage[margin=1.2in]{geometry}

\usepackage{graphicx}
\usepackage{enumerate}
\usepackage{url}
\usepackage[shortlabels]{enumitem}
\usepackage{hyperref}
\usepackage{comment}
\usepackage{multirow}
\usepackage{amssymb}
\usepackage{amsmath}
\usepackage{amsxtra}


\newtheorem{prop}{Proposition}[section]
\newtheorem{thm}[prop]{Theorem}
\newtheorem{coro}[prop]{Corollary}
\newtheorem{lemma}[prop]{Lemma}
\newtheorem{conj}{Conjecture}

\newtheorem{remark}[prop]{Remark}
\numberwithin{table}{section}


\DeclareMathOperator{\Aut}{Aut}

\DeclareMathOperator{\Gal}{Gal}

\DeclareMathOperator{\cond}{cond}
\DeclareMathOperator{\Spec}{Spec}


\newcommand{\Om}{{\mathcal{O}}}
\newcommand{\Disc}{{\mathcal{D}}}

\def\ZZ{\mathbb Z}
\def\Z{{\mathbb Z}}             

\def\Q{{\mathbb Q}}             

\def\FF{\mathbb F}
\def\F{\mathbb F}
\def\C{{\mathbb C}}             


\def\<#1>{{\left\langle{#1}\right\rangle}}


\def\id#1{{\mathfrak{#1}}}      

\DeclareMathOperator{\norm}{{\mathcal N}}

\let\kro\dkro
\def\normid#1{{\norm{\id{#1}}}}

\def\p{{\mathfrak p}}
\def\q{{\mathfrak q}}

\DeclareMathOperator{\GL}{GL}
\DeclareMathOperator{\SL}{SL}

\DeclareMathOperator{\PGL}{PGL}

\def\vp{v_{\p}}
\def\vq{v_{\q}}



\newcommand{\lmfdbec}[3]{\href{http://www.lmfdb.org/EllipticCurve/Q/#1/#2/#3}{{\text{\rm#1.#2#3}}}}
\newcommand{\lmfdbeciso}[2]{\href{http://www.lmfdb.org/EllipticCurve/Q/#1/#2}{\text{\rm#1.#2}}}

\newcommand{\lmfdbecnf}[4]{\href{http://www.lmfdb.org/EllipticCurve/#1/#2/#3/#4}{{\text{\rm#1-#2-#3#4}}}}
\newcommand{\lmfdbecnfiso}[3]{\href{http://www.lmfdb.org/EllipticCurve/#1/#2/#3}{#1-#2-#3}}

\title[Elliptic curves over imaginary quadratic fields]
      {On elliptic curves of prime power conductor over imaginary
        quadratic fields with class number one}

\author{John Cremona}
\address{Warwick Mathematics Institute, University of Warwick,
  Coventry, CV4 7AL, UK}
\email{j.e.cremona@warwick.ac.uk}
\thanks{JEC is supported by EPSRC Programme Grant EP/K034383/1
  \textit{LMF: L-Functions and Modular Forms}, and the Horizon 2020
  European Research Infrastructures project \textit{OpenDreamKit}
  (\#676541).}

\author{John Cremona and Ariel Pacetti}
\address{FaMAF-CIEM, Universidad Nacional de C\'ordoba. C.P:5000, C\'ordoba, Argentina.}
\email{apacetti@famaf.unc.edu.ar}
\thanks{AP was supported by a Leverhulme Trust Visiting Professorship
 and by grant PIP 2014-2016 11220130100073.}

\subjclass{Primary: 11G05, Secondary: 14H52}

\begin{document}

\begin{abstract}
  The main result of this paper is to extend from $\Q$ to each of the
  nine imaginary quadratic fields of class number one a result of
  Serre (1987) and Mestre-Oesterl\'e (1989), namely that if $E$ is an
  elliptic curve of prime conductor then either $E$ or a $2$-, $3$- or
  $5$-isogenous curve has prime discriminant.  For four of the nine
  fields, the theorem holds with no change, while for the remaining
  five fields the discriminant of a curve with
  prime conductor is (up to isogeny) either prime or the square of a prime.  The proof is
  conditional in two ways: first that the curves are modular, so are
  associated to suitable Bianchi newforms; and second that a certain
  level-lowering conjecture holds for Bianchi newforms.  We also
  classify all elliptic curves of prime power conductor and
  non-trivial torsion over each of the nine fields: in the case of
  $2$-torsion, we find that such curves either have CM or with a small
  finite number of exceptions arise from a family analogous to the
  Setzer-Neumann family over $\Q$.
\end{abstract}

\maketitle


\section*{Introduction}

The theory of elliptic curves plays a crucial role in modern number
theory. An important advance came with the systematic construction of
tables, as done by the first author for elliptic curves defined over
$\Q$ (\cite{CremonaAlgo}). The original purpose of the present article
was to prove analogues over imaginary quadratic fields of class number
one to well-known bounds relating the conductor and discriminant of an
elliptic curve over $\Q$ with prime power conductor. A useful
application of such a result is an effective algorithm to construct
tables of elliptic curves of prime power conductor, via solving Thue
equations. While succeeding in carrying out our original aim, we also
found several examples of phenomena for curves over imaginary
quadratic fields that do not occur over $\Q$, and were able to
classify all elliptic curves with prime power conductor and nontrivial
torsion over the fields in question, extending the results of Miyawaki
\cite{Miyawaki} for elliptic curves over $\Q$ and Shumbusho
\cite{Shumbusho} over imaginary quadratic fields.

Over $\Q$, there are only finitely many elliptic curves with complex
multiplication (CM curves) of prime power conductor $p^r$. The reason
is, first, that there are finitely many imaginary quadratic orders
$\Om$ of class number $1$, and second, that if $p$ divides the
discriminant of $\Om$, then all curves~$E$ with endomorphism ring
isomorphic to~$\Om$ have additive reduction at $p$, and the field
where $E$ attains good reduction is not abelian. In particular, any
other rational curve isomorphic to~$E$ (that is, a twist of~$E$) will
also have bad reduction at $p$, so the prime power discriminant
condition gives at most one curve per order.

Over an imaginary quadratic field~$K$, this finiteness statement no
longer holds true. If $E/K$ has CM by $\Om_K$, the ring of integers of
$K$, and $p$ ramifies in $K$, the curve attains good reduction over an
abelian extension of the completion $K_p$. In particular, some twist
of $E$ (quadratic, quartic or sextic, depending on the number of roots
of unity in $K$) attains good reduction at $p$. However the required
local extension does not come from a global one, so we do not get a
curve over~$K$ of conductor $1$ (which do not exist), but an infinite
family, all twists of each other, of CM curves of prime square
conductor; moreover in each case the density of primes of $K$ whose
squares arise in this way is in each case $1/\#\Om_K^*$.  In
Section~\ref{section CM}, we explain this phenomenon, and describe
precisely all elliptic curves of prime power conductor and complex
multiplication.

Another notable new phenomenon that occurs over imaginary quadratic
fields, is the existence of elliptic curves of prime conductor whose
residual Galois representations modulo $p$ are irreducible but not
absolutely irreducible. When $p$ is odd, this phenomenon does not
occur for elliptic curves over~$\Q$: the reason is the existence of
complex conjugation in the absolute Galois group, whose image under
the mod-$p$ representation is similar to
$\left(\begin{smallmatrix}1 & 0\\ 0 & -1 \end{smallmatrix}
\right)$. Over an imaginary quadratic field, or any number field which
is totally complex (i.e. has no real embedding), the absolute Galois
group has no such complex conjugation elements and this argument does
not apply.

At the prime $2$ the situation is more exciting! Over~$\Q$, if $E$ is
a semistable elliptic curve whose discriminant is a square, then its
residual image is reducible. There are two different ways to prove
such an assertion: one comes from the study of finite flat group
schemes over $\Spec(\Z)$. The hypotheses ($E$ being semistable and the
discriminant being a square) imply by a theorem of Mazur (see
Theorem~\ref{thm:Mazur} below) that $E[2]$ is a finite flat group
scheme of type $(2,2)$. As explained in \cite{Mestre}, there are only
$4$ such group schemes, and only two of them satisfy that the
determinant of the group scheme is isomorphic to the group scheme
$\mu_2$; they both have an invariant subspace. A different approach
comes from the study of the mod-$2$ Galois representation itself: if
it is irreducible, then by Ribet's level-lowering result, there would
exist a modular form of level $1$ and weight $2$, while if the image
were irreducible but not absolutely irreducible (i.e. it is the cyclic
subgroup of order $3$ in $\GL_2(\F_2)$), then there would exist a
cubic extension of $\Q$ unramified outside $2$; but such an extension
does not exist.

Over an imaginary quadratic field~$K$ of class number one, in four
cases such an extension does not exist either, so the same result
holds (for modular elliptic curves, assuming an analogous
level-lowering conjecture).  However for $K=\Q(\sqrt{-d})$ for
$d\in\{11,19,43,67,163\}$ where $2$ is inert in $K$, there is no
reason for such Galois representation not to exist.  In particular,
they supply new examples of irreducible finite flat group schemes of
type $(2,2)$ over $\Spec(\Om_K)$ (the irreducibility comes from the
fact that irreducible $2$-groups are only the additive and the
multiplicative ones as proved in \cite[Corollary page 21]{Oort-Tate},
which give rational $2$-torsion).  One simple example of this
phenomenon comes from the elliptic curves over~$\Q$ of conductor~$11$,
which have minimal discriminant either $-11$ or $-11^5$; after
base-change to $K=\Q(\sqrt{-11})$ the conductor is still prime but the
minimal discriminants are all now square.  For an example which is not
a base-change, the curve \lmfdbecnf{2.0.11.1}{47.1}{a}{1} (using its
LMFDB label, see \cite{lmfdb}):
\[
  E:\quad y^2+y=x^3+\alpha x^2-x
  \]
where $\alpha=(1+\sqrt{-11})/2$, has prime conductor $\p=(\pi)$ of
norm~$47$, with $\pi=7-2\alpha$, and discriminant $\pi^2$, and is
alone in its isogeny class.  This phenomenon is studied in detail in
Section~\ref{section:evenexponents} below. We suspect that there are
infinitely many examples like this over each of these five fields.

Another interesting problem that arose is that of classifying elliptic
curves (up to $2$-isogenies and twists) with a rational $2$-torsion
point. The prime $2$ is always hard to handle, and we develop
tools (including a result characterizing such curves, see
Theorem~\ref{thm:classification2torsion}) that allow us to give a
complete description of them. The ideas developed
here could be adapted to more general situations, but the fact that
$K$ has finitely many units appears to play an important role.

The main results of the article are Theorem~\ref{thm:mainthm} and
Corollary~\ref{coro:Szpiro}, which establish Szpiro's conjecture for
``modular'' prime power conductor elliptic curves over $K$, assuming a
form of level-lowering result
(Conjecture~\ref{conj:loweringlevel}). The general strategy of the
proof is as follows: let $E/K$ be a modular elliptic curve (see
Conjecture~\ref{conj:modularity}) of prime power conductor. If $E$ has
potentially good reduction, there is a well-known bound for its
minimal discriminant~$\Disc(E)$, so we can focus on the semistable case. If
$\ell$ is a rational prime dividing the valuation of $\Disc(E)$, by a
theorem of Mazur, $E[\ell]$ is a finite flat group scheme. If the
residual Galois representation is absolutely irreducible, then the
level-lowering conjecture implies the existence of a Bianchi modular
form modulo~$\ell$ of level $1$ and weight $2$ whose Galois
representation matches that of $E$. Since for $\ell$ odd, there are no such forms over
the fields in question, we get a contradiction. We want to
emphasise that the level-lowering result might be hard to prove for
small primes (which divide the order of elliptic points). For this
reason, we include an unconditional proof (not relying on modularity) for $\ell=2$ and $3$ in Theorem~\ref{thm:mainthm} for the nine fields considered.

Assume otherwise that the residual image is absolutely reducible, i.e. it is
either reducible over~$\F_{\ell}$, or irreducible but reducible
over~$\F_{\ell^2}$. For the first possibility, we prove that either
such curves do not exist, or otherwise there is an isogenous curve
whose discriminant valuation is prime to $\ell$ (the case $\ell=2$
being hardest). When $\ell$ is odd, the second case is eliminated by a
detailed study of the residual Galois representation.  Lastly, for
$\ell=2$ we prove that the discriminant valuation is at most $2$, where
(for certain of the fields~$K$ only) the exceptional curves with
square discriminant arise as described above.

The article is organized as follows. The first section contains a brief
description of Bianchi modular forms and modularity of elliptic
curves. It also contains the conjectural level-lowering statement in the
spirit of Ribet's result, which we expect to hold in the setting of
Bianchi modular forms. An important difference in our statement comes
from the fact that we do not expect forms of minimal level to always
lift to characteristic zero: this is why the statement of
Conjecture~\ref{conj:loweringlevel} is in terms of group cohomology
over~$\F_{\ell}$.

The second section contains the main theorem, and its proof when the
image is absolutely irreducible. The third section studies elliptic
curves over $K$ with complex multiplication. The main results includes
a complete classification of all CM curves of odd prime
power discriminant.  The case of small image at an odd prime is
treated in the fourth section, where elliptic curves over $K$ of odd
prime power conductor with a rational point of odd order~$\ell$ are
considered. In Theorem \ref{thm:3-torsionspecialcase} we prove that if
$\ell=3$ and $K=\Q(\sqrt{-3})$, then any such curve either has
discriminant valuation not divisible by $3$, or its $3$-isogenous curve
satisfies this property. For all other choices of $\ell$ and $K$, we
give a finite list of possible curves: see
Table~\ref{table:oddtorsioncurves}.

The fifth section studies curves of odd conductor over $K$ with a
rational $2$-torsion point. An important result is
Theorem~\ref{thm:classification2torsion}, where a description of all
such curves (up to twist) is given. This result is very general, and
can be applied to different situations. Using this characterization,
we describe all curves of prime power conductor $\id{p}^r$ with a
rational $2$-torsion point, according to the following possible cases:
either a twist of the curve has good reduction at $\id{p}$, or all
twists have additive reduction at $\id{p}$, or the curve has a twist
of multiplicative reduction. We prove that in the first case all
curves have CM and come in families, described explicitly in
Theorem~\ref{thm:goodtwist}, using the results from
Section~\ref{section CM}.  For the additive ones, we prove in
Theorem~\ref{thm:additivetwist} that there are only some sporadic
cases (also all CM), and for multiplicative ones, we prove that there
are potentially (and probably) infinitely many, almost all belonging
to a family analogous to the so-called \emph{Setzer-Neumann} family
(as in \cite{Setzer}) for curves defined over $\Q$. As in the
classical case, all such curves have rank $0$.

The important consequence of this detailed case by case study for our
main result is that if $E$ is an elliptic curve defined over $K$ of
odd prime conductor with a rational $2$-torsion point, then
either $E$ or a $2$-isogenous curve has odd discriminant valuation.

Finally, in the last section, we study the case of curves whose Galois
image modulo~$2$ is cyclic of order $3$, giving curves of prime
conductor but prime square discriminant, as explained above.

\subsection*{Notation and terminology.} $K$ will denote an imaginary
quadratic field $\Q(\sqrt{-d})$ of class number~$1$, with ring of
integers $\Om_K$.  As is well known, there are~$9$ such fields, with
$d\in\{1,2,3,7,11,19,43,67,163\}$.  Many of the results of
Section~\ref{section 2 torsion} also apply to the case $K=\Q$.

We say that an ideal or element of $\Om_K$ is \emph{odd} if it is
coprime to~$2$.  Let $e_2$ be the ramification degree of $2$ in
$K/\Q$, so that $e_2=1$ except for $K=\Q(\sqrt{-1})$ and
$K=\Q(\sqrt{-2})$.  Note also that $2$ splits only in
$K=\Q(\sqrt{-7})$.  The primes dividing $2$ play a crucial role in
Section~\ref{section 2 torsion}, where we will denote by $\q$ a prime
dividing $2$, and by $\p$ any prime ideal (which might divide $2$ or
not).  The valuation at~$\p$ is denoted~$\vp()$.  We denote
by~$\varepsilon$ a generator of the finite unit group~$\Om_K^*$ so
that $\varepsilon=-1$ except for $K=\Q(\sqrt{-1})$ and
$K=\Q(\sqrt{-3})$ when $\varepsilon=\sqrt{-1}$, or $\varepsilon$ is
a $6$th root of unity, respectively.

All curves explicitly mentioned will be labeled with their LMFDB label, see
  \cite{lmfdb}.

\begin{remark}
  It is well-known that given an elliptic curve $E/K$, if the class
  number of $K$ equals $1$ then $E$ has a global minimal model. Such
  model might not be unique, hence there is in general no notion of a
  \emph{minimal discriminant}. However, over an imaginary quadratic
  field, all units are annihilated by $12$, and hence the value of the
  minimal discriminant~$\Disc(E)$ is well-defined.
\end{remark}

\medskip

\textbf{Acknowledgments:} we would like to thank Nicolas Vescovo for
participating in some discussions of the present article, to Luis
Dieulefait, for explaining to us some technicalities used in the proof of
Theorem \ref{thm:Kraus}, to Angelos Koutsianas for his computations
used in the proof of Theorem~\ref{thm:mainthm}, and to Samir Siksek
and Haluk {\c S}eng\"un for many useful conversations. The second
author would like to thank the University of Warwick for its
hospitality during his visit as a Leverhulme Visiting Professor. Last
but not least, we would like to thank the anonymous referee for many
useful comments and corrections.

\section{Bianchi modular forms and modularity of elliptic curves}
For background on Bianchi modular forms and modularity of elliptic
curves defined over imaginary quadratic fields, we refer to the survey
article of {\c S}eng\"un \cite{Sengun} and the work of the first
author (\cite{CremonaTessellations}, \cite{CremonaTwist},
\cite{Cremona-Whitley}).  For our purposes we may restrict our
attention to the space $S_2(\id{n})$ of Bianchi modular forms which
are cuspidal and of weight~$2$ for the congruence subgroup
$\Gamma_0(\id{n})\leq\GL_2(\Om_K)$, where the level
$\id{n}\subseteq\Om_K$ is an integral ideal of~$K$.  This space is a
finite-dimensional vector space equipped with a Hecke action. Its
newforms subspace is spanned by eigenforms which are simultaneous
eigenvectors for the algebra of Hecke operators.  Bianchi modular
forms can also be seen within the context of cohomological automorphic
forms, since we have the isomorphism
$S_2(\id{n})\cong H^1(Y_0(\id{n}),\C)$, where $Y_0(\id{n})$ is the
quotient of hyperbolic $3$-space $\mathcal{H}_3$
by~$\Gamma_0(\id{n})$.  A more concrete description of these Bianchi
modular forms is as real analytic functions $\mathcal{H}_3\to\C^3$
satisfying certain conditions.  In the work of the second author and
his students (\cite{CremonaTessellations}, \cite{Cremona-Whitley})
explicit methods were developed to compute the spaces $S_2(\id{n})$
over~$K$ for each of the nine imaginary quadratic fields~$K$ of class
number~$1$. Recall that the orders of the isotropy groups of points of
$Y_0(1)$ are only divisible by the primes $2$ and $3$, hence for
$\ell \ge 5$ the cohomology $H^1(Y_0(1),\F_\ell)$ matches the group
cohomology
$H^1(\PGL_2(\Om_K),\F_\ell) = H^1(\PGL_2(\Om_K),\Z)\otimes \F_\ell$
(see for example \cite[Section 1.4]{Ash-Stevens}). Then the previous
computations, together with results of \cite{Haluk-expmath}, establish
the following result:
\begin{thm}
For each of the nine imaginary quadratic fields of class number~$1$,
the space $S_2(1)$ of weight~$2$ cuspidal Bianchi modular forms of
level~$1$ is trivial.  Moreover for all primes $\ell\ge 5$ the space
$H^1(Y_0({1}),\F_{\ell})$ is also trivial.
\label{thm:noleveloneforms}
\end{thm}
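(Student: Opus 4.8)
The plan is to prove the two assertions (triviality of $S_2(1)$, and triviality of $H^1(Y_0(1),\F_\ell)$ for $\ell\ge5$) by reducing both to explicit cohomology computations for $\PGL_2(\Om_K)$ over each of the nine fields, and then invoking the tabulated results. The first statement is about weight-$2$ cuspidal Bianchi forms of level~$1$; via the isomorphism $S_2(1)\cong H^1(Y_0(1),\C)$ recalled in the text (more precisely the cuspidal part), this is a statement about the complex cohomology of the arithmetic quotient, which in turn is governed by $H^1(\PGL_2(\Om_K),\C)$. So first I would recall that these complex cohomology groups have been computed explicitly in the references \cite{CremonaTessellations} and \cite{Cremona-Whitley} for all nine fields, and that in every case the relevant cuspidal space is found to be zero at level~$1$; this is the content I would cite directly.

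For the second statement, the key structural input is already isolated in the excerpt: because $Y_0(1)$ is a quotient of $\mathcal{H}_3$ by $\PGL_2(\Om_K)$ (up to the usual $\GL$-versus-$\PGL$ bookkeeping), and because the isotropy groups of points of $Y_0(1)$ have orders divisible only by $2$ and $3$, the primes $\ell\ge5$ are \emph{invertible} on all the stabilizers. Hence I would first establish the comparison
\[
  H^1(Y_0(1),\F_\ell)\;\cong\;H^1(\PGL_2(\Om_K),\F_\ell),
\]
justified by the fact that the higher cohomology of the finite isotropy groups vanishes with $\F_\ell$-coefficients when $\ell$ does not divide their order (this is the equivariant/spectral-sequence argument referenced via \cite[Section 1.4]{Ash-Stevens}). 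The same coprimality lets me pass from field coefficients to integral coefficients tensored down, giving $H^1(\PGL_2(\Om_K),\F_\ell)=H^1(\PGL_2(\Om_K),\Z)\otimes\F_\ell$, exactly as stated. Thus triviality of the $\F_\ell$-cohomology for all $\ell\ge5$ is equivalent to the integral group $H^1(\PGL_2(\Om_K),\Z)$ having no $\ell$-torsion for any $\ell\ge5$ and trivial free part.

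With these reductions in place, the proof becomes a finite verification: for each of the nine fields one must know the integral first cohomology $H^1(\PGL_2(\Om_K),\Z)$, or equivalently the abelianization of $\PGL_2(\Om_K)$, and check that its only possible torsion is supported at $2$ and $3$ (so that after $\otimes\,\F_\ell$ with $\ell\ge5$ it dies) and that it has rank zero. This is precisely what the computations of \cite{Haluk-expmath}, combined with the earlier explicit tessellation computations, provide. I would therefore cite these as giving, field by field, the vanishing of the free part and the $\ell$-torsion for $\ell\ge5$, completing the argument.

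The main obstacle is not conceptual but computational and is external to the present paper: the entire result rests on the accuracy and completeness of the prior explicit computations of $S_2(1)$ and of $H^1(\PGL_2(\Om_K),\Z)$ for each of the nine fields. The comparison isomorphism is clean once one has the coprimality of $\ell$ with the isotropy orders, so the delicate point to handle carefully is the correct statement of that comparison — ensuring the identification of $Y_0(1)$'s stabilizers and the vanishing of higher isotropy cohomology in the $\ell\ge5$ range — after which the conclusion follows by assembling the tabulated data.
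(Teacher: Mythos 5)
Your proposal is correct and follows essentially the same route as the paper: the paper likewise deduces the vanishing of $S_2(1)$ from the explicit level-one computations of \cite{CremonaTessellations} and \cite{Cremona-Whitley}, and for $\ell\ge5$ uses the coprimality of $\ell$ with the isotropy orders of $Y_0(1)$ (divisible only by $2$ and $3$) to identify $H^1(Y_0(1),\F_\ell)$ with $H^1(\PGL_2(\Om_K),\F_\ell)=H^1(\PGL_2(\Om_K),\Z)\otimes\F_\ell$ via \cite[Section 1.4]{Ash-Stevens}, concluding with the computations of \cite{Haluk-expmath}. The theorem is presented in the paper exactly as the outcome of these cited computations, so your reduction-plus-citation structure is the intended proof.
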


It is known that these Bianchi modular forms have associated
$\ell$-adic Galois representations $\rho_{F,\ell}$.  These were first
constructed by Taylor \textit{et al.}~in \cite{TaylorI},
\cite{TaylorII} with subsequent results by Berger and Harcos
in~\cite{Berger}.  Below we only need to refer to the residual mod-$\ell$
representations $\overline{\rho_{F,\ell}}$.

For an elliptic curve~$E$ defined over an imaginary quadratic
field~$K$, we say that $E$ is modular if $L(E,s)=L(F,s)$ for some
$F\in S_2(\id{n})$ over~$K$, where $\id{n}$ is the conductor of~$E$.
The following conjecture, a version of which was first made by
Mennicke, is part of Conjecture~9.1 of \cite{Sengun}, following
\cite{CremonaTwist}:
\begin{conj}
Let $E$ be an elliptic curve defined over an imaginary quadratic
field~$K$ of class number~$1$ that does not have complex
multiplication by an order in~$K$.  Then $E$ is modular.
\label{conj:modularity}
\end{conj}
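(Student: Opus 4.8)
The plan is to attack Conjecture~\ref{conj:modularity} through the automorphy-lifting (Taylor--Wiles--Kisin) philosophy, transported from $\GL_2/\Q$ to $\GL_2$ over the imaginary quadratic field $K$. Fix a non-CM curve $E/K$ of conductor $\id{n}$ and a rational prime $\ell$, and consider the residual representation $\overline{\rho}_{E,\ell}\colon \Gal(\overline{K}/K)\to\GL_2(\F_\ell)$ together with the $\ell$-adic lift $\rho_{E,\ell}$. The goal is an ``$R=T$'' statement identifying a suitable universal deformation ring $R$ of $\overline{\rho}_{E,\ell}$ (with local conditions matching the behaviour of $E$ at each prime) with the Hecke algebra $\T$ acting on the cohomology $H^1(Y_0(\id{n}),-)$ of the arithmetic hyperbolic $3$-manifold $Y_0(\id{n})$. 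Granting such an identification, the curve $E$ defines a point of $\Spec R$; modularity of $\overline{\rho}_{E,\ell}$ forces that point to be modular, producing the required $F\in S_2(\id{n})$ with $L(E,s)=L(F,s)$.

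For the base of the induction one needs residual modularity of $\overline{\rho}_{E,\ell}$ for at least one $\ell$. Over $\Q$ this is supplied by Langlands--Tunnell at $\ell=3$; over $K$ no such input is available directly, so I would instead establish \emph{potential} modularity by a Moret-Bailly argument, moving to an auxiliary CM extension in which $\overline{\rho}_{E,\ell}$ becomes visibly automorphic (for instance induced from a Hecke character, hence dihedral), and then descend. The exceptional residual images must be separated off and handled by hand exactly as in the classical case: the dihedral and CM cases by class field theory and base change, the reducible cases by Eisenstein-congruence arguments. Here the minimal-level vanishing proved in Theorem~\ref{thm:noleveloneforms} is exactly the kind of input that closes off the smallest levels, since it rules out stray level-one forms in characteristic zero and, for $\ell\ge 5$, in characteristic $\ell$.

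The hard part is the lifting step itself, and it is here that I expect the main obstacle. Unlike a modular curve, $Y_0(\id{n})$ has its cohomology spread over two adjacent degrees rather than concentrated in one, so the Euler-characteristic ``numerical coincidence'' underlying Taylor--Wiles patching fails (the defect is $l_0=1$); moreover a two-dimensional representation of $\Gal(\overline{K}/K)$ is not in general essentially self-dual, so the polarized automorphy-lifting theorems do not apply off the shelf. One is therefore forced into the derived framework of Calegari--Geraghty, patching complexes instead of modules, which reduces the problem to three genuinely difficult inputs: (i) the existence and full local--global compatibility of Galois representations attached to the possibly torsion classes in $H^1(Y_0(\id{n}),\F_\ell)$; (ii) concentration of this cohomology in the expected range of degrees, together with an Ihara-type lemma at the patching primes; and (iii) a descent from potential to genuine modularity, for which solvable base change is unavailable in the relevant non-solvable extensions. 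Input (i) is now within reach, but the local--global compatibility at $\ell$ combined with the required cohomological vanishing in (ii), and the descent in (iii), are the principal obstructions; it is precisely their absence in full generality that keeps Conjecture~\ref{conj:modularity} a conjecture rather than a theorem.
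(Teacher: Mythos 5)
You have not given a proof, and indeed you could not have matched one against the paper: the statement is labelled Conjecture~\ref{conj:modularity} in the paper precisely because it is open. The paper never proves it; it only assumes modularity as a hypothesis in its main results (e.g.\ Theorem~\ref{thm:mainthm}) and cites the partial evidence that exists --- some cases proved in \cite{DGP}, and \emph{potential} modularity under mild hypotheses by \cite[Theorem 1.1]{Calegari-Geraghty}. Your text is an accurate survey of the modern strategy (Taylor--Wiles--Kisin patching transported to the arithmetic $3$-manifold $Y_0(\id{n})$, in the derived Calegari--Geraghty formulation forced by the defect $l_0=1$), but a survey of a strategy is not a proof, and your own final sentence concedes the point: the three inputs you label (i)--(iii) --- local--global compatibility for the Galois representations attached to torsion classes in $H^1(Y_0(\id{n}),\F_\ell)$, concentration/vanishing of cohomology in the expected degrees, and descent from potential to genuine modularity --- are exactly the steps a proof would have to supply, and you supply none of them.

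To be concrete about where the argument would break if one tried to push it through: the residual-modularity base case has no substitute for Langlands--Tunnell over $K$, and the Moret-Bailly/potential-modularity route produces automorphy only over an auxiliary extension that need not be solvable, so cyclic (solvable) base change cannot bring it back down to $K$ --- this is your input (iii), and it is not a technicality but the same obstruction that has kept the conjecture open. Likewise, invoking an ``$R=\T$'' identification presupposes that the Hecke algebra acting on the (torsion-laden) cohomology of $Y_0(\id{n})$ carries Galois representations with full local--global compatibility, including at $\ell$; this is known only in weaker forms (cf.\ the discussion of \cite{Scholze} in the paper, where the attached representations are residual and need not lift to characteristic zero at level $\id{n}$). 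Your appeal to Theorem~\ref{thm:noleveloneforms} also does less than you suggest: it rules out level-one forms and mod-$\ell$ cohomology classes for $\ell\ge5$, which is what the paper needs for its \emph{level-lowering} applications, but it is not the Eisenstein/exceptional-image input a lifting argument requires. In short, the proposal correctly diagnoses why Conjecture~\ref{conj:modularity} is still a conjecture, but it proves nothing beyond what the paper already cites.
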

Some cases of the conjecture have been proved (see \cite{DGP}), and
under some mild hypothesis it can be shown that any such curve is
potentially modular (see \cite[Theorem 1.1]{Calegari-Geraghty}).
Curves with complex multiplication correspond to Hecke characters and
belong to non-cuspidal modular forms. These will be considered in
Section~\ref{section CM}.

Following the classical result of Ribet
(\cite{Ribetlowering}), we make the following conjecture.

\begin{conj}
  Let $F$ be a weight $2$ Bianchi newform of level
  $\Gamma_0(\id{n}\id{p})$, with $\id{p}$ a prime number. Suppose that
  $\ell\ge 5$ is a prime for which the representation $\rho_{F,\ell}$
  satisfies:
  \begin{enumerate}
  \item The residual representation is absolutely irreducible.
  \item The residual representation is finite at $\id{p}$.
  \end{enumerate}
  Then there is an automorphic form $G \in H^1(Y_0(\id{n}),\F_{\ell})$ whose Galois
  representation is isomorphic to $\overline{\rho_{F,\ell}}$.
\label{conj:loweringlevel}
\end{conj}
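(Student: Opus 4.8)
The plan is to transpose Ribet's level-lowering argument \cite{Ribetlowering} from classical modular curves to the cohomology of the arithmetic $3$-manifolds $Y_0(\id{n})$, working throughout with $\F_{\ell}$-coefficients rather than in characteristic zero. Working integrally is not optional here: as noted above, the minimal-level eigenclass need not lift, which is exactly why the conclusion is phrased in terms of $H^1(Y_0(\id{n}),\F_{\ell})$. Assume $\id{p}\nmid\id{n}$, let $\T$ be the Hecke algebra acting on $H^1(Y_0(\id{n}\id{p}),\F_{\ell})$, and let $\id{m}\subset\T$ be the maximal ideal cut out by $F$, so that $\overline{\rho_{F,\ell}}$ is the (semisimple) representation attached to $\T/\id{m}$. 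The two degeneracy maps $Y_0(\id{n}\id{p})\to Y_0(\id{n})$ obtained by forgetting the $\id{p}$-level structure in the two natural ways induce
\[
\delta\colon H^1(Y_0(\id{n}),\F_{\ell})^{2}\longrightarrow H^1(Y_0(\id{n}\id{p}),\F_{\ell}),
\]
whose image is the ``old at $\id{p}$'' subspace, together with a pushforward map in the opposite direction. The aim is to produce a nonzero eigenclass in $H^1(Y_0(\id{n}),\F_{\ell})$ carrying the same Hecke eigenvalues as $\id{m}$ away from $\id{p}$.

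The first main step is a cohomological analogue of Mazur's principle. Hypothesis (2), that $\overline{\rho_{F,\ell}}$ is finite at $\id{p}$, should constrain the local representation at $\id{p}$ to the shape occurring in the finite-flat case of Ribet's theorem, and hence force the class attached to $F$ to be detected, after localising at $\id{m}$, by the image of $\delta$; concretely one analyses the action of $U_{\id{p}}$ on the $\id{m}$-localised old subspace via the Eichler--Shimura relation, whose characteristic roots are those of $X^{2}-T_{\id{p}}X+\mathrm{N}(\id{p})$. Hypothesis (1), absolute irreducibility, guarantees that $\id{m}$ is non-Eisenstein: this is what makes $\overline{\rho_{F,\ell}}$ well defined and separates the relevant eigenclasses from the Eisenstein and boundary contributions to the cohomology. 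Granting these inputs one extracts an eigenclass $G$ at level $\id{n}$, and the isomorphism of its associated Galois representation with $\overline{\rho_{F,\ell}}$ follows from Chebotarev density once the Hecke eigenvalues have been matched at all primes away from $\id{n}\id{p}\ell$.

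The hard part, and the reason the statement must remain a conjecture, is the analogue of Ihara's lemma: that after localising at the non-Eisenstein ideal $\id{m}$ the map $\delta$ is injective, equivalently that its kernel is Eisenstein. Over $\Q$ this is proved geometrically, using the Jacobian $J_0(N)$ and Ihara's theorem controlling the kernel of $J_0(N)^{2}\to J_0(Np)$; but $Y_0(\id{n})$ is a real hyperbolic $3$-manifold with no underlying algebraic variety, so there is no Jacobian, no N\'eron model, and no Deligne--Rapoport description of a reduction mod $\id{p}$ to exploit. One must also deal directly with $\ell$-torsion in the integral cohomology and with non-liftable eigenclasses. A purely group-theoretic proof of Ihara's lemma for $\GL_2$ over $K$---for instance via strong approximation and congruence phenomena for $\SL_2$ over $K$---would suffice, but no such argument is presently known; supplying it is the essential obstruction, and is precisely why we record this level-lowering statement as a conjecture.
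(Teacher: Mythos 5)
You have correctly recognised the essential point: this statement is Conjecture~\ref{conj:loweringlevel}, and the paper itself offers no proof of it. There is therefore no ``paper proof'' to compare against; the authors state the conjecture by analogy with Ribet's theorem \cite{Ribetlowering}, remark that partial results in its direction appear in \cite{Calegari}, and explain (via Scholze's work \cite{Scholze} on torsion Galois representations) why the conclusion must be phrased in terms of $H^1(Y_0(\id{n}),\F_{\ell})$ rather than in terms of a characteristic-zero newform of level $\id{n}$. Your proposal is accordingly not a proof and does not claim to be one: it is a strategy sketch that concedes the decisive step is unavailable, which is exactly the honest position.

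Your diagnosis of the obstruction is also accurate and matches both the paper's framing and the state of the literature. The two halves of Ribet's argument that you isolate --- a Mazur's-principle step exploiting finiteness at $\id{p}$ together with non-Eisensteinness from absolute irreducibility, and an Ihara-type injectivity statement for the degeneracy map $\delta\colon H^1(Y_0(\id{n}),\F_{\ell})^{2}\to H^1(Y_0(\id{n}\id{p}),\F_{\ell})$ localised at $\id{m}$ --- are precisely the ingredients whose classical proofs lean on the algebraic geometry of modular curves (Jacobians, N\'eron models, Deligne--Rapoport reduction), none of which exists for the hyperbolic $3$-orbifolds $Y_0(\id{n})$. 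Your further point, that large torsion in the integral cohomology and non-liftable mod-$\ell$ eigenclasses must be confronted directly, is the same phenomenon the paper highlights when explaining why the form $G$ is not expected to be global. The one thing to keep in mind if you pursue this: some partial substitutes for these geometric inputs do exist (the torsion Jacquet--Langlands correspondence of \cite{Calegari}, and Galois-deformation-theoretic approaches in the spirit of Calegari--Geraghty), so the situation is not quite as barren as ``no such argument is presently known'' suggests; but it is true that none of these yields the conjecture in the generality stated, and in particular not with the small-prime and torsion issues handled. Your write-up is a faithful account of why the statement is recorded as a conjecture rather than a theorem.
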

Some results in the direction of the conjecture are proven in
\cite{Calegari}. Note that the conjecture refers to an element in the
mod $\ell$ cohomology. In contrast to modular forms for $\SL_2(\ZZ)$,
for Bianchi modular forms the homology (and the cohomology) of
$Y_0(\id{n})$ contains a big torsion part. By results of Scholze
(\cite{Scholze}) the torsion forms do have Galois representations
attached, but these are residual representations that in general do
not admit a lift of level $\id{n}$ to characteristic zero, so the
form in Conjecture~\ref{conj:loweringlevel} is not expected to
be global.

\section{Modular elliptic curves of prime power conductor}

Recall the following result due to Serre and Mestre-Oesterl\'e (see \cite{Mestre}).
\begin{thm} Let $E/\Q$ be a modular elliptic curve of prime conductor
  $p$. If $p >37$ then $\Disc(E) = \pm p$ up to $2$-isogenies (i.e.,
  either $\Disc(E) = \pm p$, or there exists a curve
  $2$-isogenous to~$E$ over~$\Q$ with discriminant~$\pm p$).
\label{thm:MO}
\end{thm}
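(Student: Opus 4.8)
The plan is to work with the mod-$\ell$ Galois representation $\bar\rho_{E,\ell}$ attached to $E$ for each rational prime $\ell$ dividing the discriminant valuation, and to derive a contradiction from the hypothesis $p>37$ unless $\Disc(E)=\pm p$ up to $2$-isogeny. First I would reduce to the semistable case: since $E$ has prime conductor $p$, it has either good or multiplicative reduction at $p$ (additive reduction would force $\cond_p(E)\ge 2$, contradicting $\cond(E)=p$), so $E$ is semistable, and good reduction at a prime of norm $p$ combined with conductor exactly $p$ means $E$ has multiplicative reduction at $p$ and good reduction everywhere else. Writing $\Disc(E)=\pm\prod q^{v_q}$, the support is contained in $\{p\}$ together with primes of additive reduction, but semistability rules the latter out, so in fact $\Disc(E)=\pm p^{v_p}$ is a prime power; the task is to show $v_p=1$ after allowing a single $2$-isogeny.

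The core mechanism is level-lowering. Fix a rational prime $\ell\mid v_p$. By the theorem of Mazur invoked in the introduction, $E[\ell]$ extends to a finite flat group scheme over $\Spec(\Z)$ (the semistability at $p$ plus $\ell\mid v_p$ makes $\bar\rho_{E,\ell}$ finite at $p$). If $\bar\rho_{E,\ell}$ is absolutely irreducible, then Ribet's level-lowering theorem (the classical analogue of Conjecture~\ref{conj:loweringlevel}, which for $E/\Q$ is a \emph{theorem}) produces a weight $2$ newform of level $1$ with the same residual representation. But $S_2(\Gamma_0(1))=0$, a contradiction. Hence no such $\ell$ with absolutely irreducible residual image can divide $v_p$, which pushes us into the reducible case. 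For $\ell\ge 5$ the image of $\GL_2(\F_\ell)$ cannot be irreducible-but-not-absolutely-irreducible while containing the image of complex conjugation (similar to $\mathrm{diag}(1,-1)$), so over $\Q$ ``absolutely irreducible'' and ``irreducible'' coincide for odd $\ell$; the residual representation is therefore reducible, and $E$ admits an $\ell$-isogeny over $\Q$. By Mazur's theorem on rational isogenies, for prime conductor and $p>37$ the only possible prime isogeny degrees are $\ell\in\{2,3,5\}$.

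The remaining work is to handle these small primes $\ell\in\{2,3,5\}$ directly and show that after at most a single $2$-isogeny the discriminant valuation is exactly $1$. For the reducible case at odd $\ell$ I would analyse the two Jordan–Hölder characters $\chi_1,\chi_2$ of $\bar\rho_{E,\ell}$ with $\chi_1\chi_2=\bar\chi_{\mathrm{cyc}}$: since $E$ has good reduction outside $p$ and multiplicative reduction at $p$, each $\chi_i$ is unramified outside $\{p,\ell\}$ and controlled at $p$ by the Tate parameter, which forces $\ell\mid v_p(\Disc)$ to be incompatible with the characters being globally consistent unless the isogenous curve absorbs the $\ell$-part of the valuation; concretely the $\ell$-isogenous curve $E'$ has $v_p(\Disc(E'))=v_p(\Disc(E))\pm\ell$ or is related by the standard change of the Tate parameter, and one checks the valuation drops to be prime to $\ell$. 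For $\ell=2$ the Setzer–Neumann-type analysis and the classification of type $(2,2)$ group schemes over $\Spec(\Z)$ (only four, two with $\mu_2$ determinant, both reducible) forces $\bar\rho_{E,2}$ reducible, and a $2$-isogeny changes $v_p$ by an odd amount, so after at most one such isogeny $v_p$ becomes odd and in fact $1$. The main obstacle is precisely this case-by-case control of how the discriminant valuation transforms under the isogeny at $\ell=2,3,5$: one must verify, using the explicit relation between the Tate parameters (or Vélu's formulas) of $\ell$-isogenous semistable curves, that the $\ell$-adic valuation of $\Disc$ can always be reduced below $\ell$, and combine these across the finitely many $\ell$ dividing $v_p$ to conclude $v_p=1$. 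The hypothesis $p>37$ enters exactly to exclude the sporadic prime isogeny degrees $>5$ permitted by Mazur's theorem only for small $p$, so that no isogeny degree other than $2,3,5$ can occur.
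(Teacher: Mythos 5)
A preliminary remark: the paper itself gives no proof of Theorem~\ref{thm:MO} — it is recalled from Mestre--Oesterl\'e \cite{Mestre}, with Theorem~\ref{thm:Mazur} cited as the key ingredient — so the fair comparison is with the classical argument, which is also the template for the paper's own proof of Theorem~\ref{thm:mainthm}. Your outline has the right skeleton (semistability, Mazur's finite flatness, Ribet level-lowering against $S_2(\Gamma_0(1))=0$ in the absolutely irreducible case, group schemes at $2$), but it has a genuine gap at the crucial reduction step. You claim that ``by Mazur's theorem on rational isogenies, for prime conductor and $p>37$ the only possible prime isogeny degrees are $\ell\in\{2,3,5\}$'', and later that $p>37$ enters exactly to exclude the sporadic degrees permitted by that theorem. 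This is not correct: Mazur's isogeny theorem allows $\ell\in\{2,3,5,7,11,13,17,19,37,43,67,163\}$, it says nothing about conductors, and for $\ell=7$ and $\ell=13$ the curves $X_0(7)$ and $X_0(13)$ have genus~$0$, so rational $\ell$-isogenies occur in \emph{infinite} families — neither that theorem nor the hypothesis $p>37$ excludes them. What closes the reducible case is finer: writing the representation as in the paper's Theorem~\ref{thm:Kraus}, semistability forces the two characters $\theta_1,\theta_2$ to have conductor supported at $\ell$ (Kraus, \cite{Kraus}), at most one of them can be ramified at $\ell$, and the other, being unramified everywhere over $\Q$, is trivial; hence $E$ or the $\ell$-isogenous curve has a rational point of order~$\ell$. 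Then Mazur's \emph{torsion} theorem gives $\ell\le 7$, and the classification of prime-conductor curves over $\Q$ with a rational torsion point (Miyawaki \cite{Miyawaki}, Setzer \cite{Setzer} — the $\Q$-analogue of the paper's Sections~4 and~5 and Table~\ref{table:oddtorsioncurves}) shows the only prime conductors that occur are $11,17,19,37$, whence $p\le 37$. This torsion-point-plus-classification step is the actual content of the theorem, and it is absent from your argument.

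The second gap is in your control of the discriminant under isogeny. For a curve with multiplicative reduction at $p$, an $\ell$-isogeny replaces the Tate parameter $q$ by $q^{\ell}$ or $q^{1/\ell}$, so $v_p(\Disc(E))$ is \emph{multiplied or divided} by $\ell$; it does not change by $\pm\ell$, and a $2$-isogeny does not change it ``by an odd amount'' (from $v_p=4$ one passes to $v_p=2$ or $v_p=8$). Consequently your final sentence does not establish $v_p\in\{1,2\}$, which is what the statement requires. The correct endgame runs: for $p>37$ no odd prime divides $v_p(\Disc(E))$ (by the torsion argument above), so $v_p$ is a power of $2$; if $v_p$ is even, then $\overline{\rho}_{E,2}$ is reducible (your group-scheme argument, or the nonexistence of $S_3$- and $C_3$-extensions of $\Q$ unramified outside $2$), so $E$ has a rational $2$-torsion point, and Setzer's classification then shows $E$ lies in the Neumann--Setzer family $p=u^2+64$, whose isogeny class consists of exactly two curves with discriminants $\pm p^2$ and $\pm p$. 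It is this classification, not any counting of valuation changes along isogenies, that bounds $v_p$ by $2$ and produces the $2$-isogenous curve of prime discriminant.
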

A key ingredient in the proof is the following result of Mazur.

\begin{thm}[(Mazur)] Let $K$ be a number field, let $E/K$ be a
  semistable elliptic curve with discriminant~$\Disc(E)$, and let
  $\ell$ be a prime number. Then $E[\ell]$ is a finite flat group
  scheme if and only if $\ell \mid v_{\id{q}}(\Disc(E))$ for all
  primes $\id{q}$ in $\Om_K$, i.e. if the ideal $(\Disc(E))$ is an
  $\ell$-th power.
\label{thm:Mazur}
\end{thm}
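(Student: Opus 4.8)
The plan is to reduce the statement to a local question at each prime of $\Om_K$ and then to analyze the primes of bad reduction by means of Tate uniformization, the genuinely delicate case being the primes above~$\ell$.

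First I would use that finite flatness over the Dedekind ring $\Om_K$ may be checked one prime at a time: the group scheme $E[\ell]$ extends to a finite flat group scheme over $\Om_K$ if and only if it does so over each completed local ring $\Om_{K,\id{q}}$. At a prime $\id{q}$ of good reduction the N\'eron model of $E$ is an abelian scheme and its $\ell$-torsion is automatically finite flat; moreover such primes satisfy $v_{\id{q}}(\Disc(E))=0$, so the divisibility $\ell\mid v_{\id{q}}(\Disc(E))$ holds trivially. Thus the entire content of the theorem is concentrated at the finitely many primes of bad reduction, which by semistability are all of multiplicative type.

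At such a prime $\id{q}$ I would invoke the Tate parametrization: over $K_{\id{q}}$ one has $E\cong \mathbb{G}_m/q^{\ZZ}$ for a Tate parameter $q\in K_{\id{q}}^{*}$ with $v_{\id{q}}(q)=v_{\id{q}}(\Disc(E))$, and the $\ell$-torsion sits in a canonical extension of Galois modules
\[
0\longrightarrow \mu_\ell \longrightarrow E[\ell] \longrightarrow \ZZ/\ell\ZZ \longrightarrow 0
\]
whose class in $K_{\id{q}}^{*}/(K_{\id{q}}^{*})^{\ell}$ is the Kummer class of~$q$. Everything now reduces to deciding when this extension admits a finite flat model over $\Om_{K,\id{q}}$, and the claim to establish is that this happens exactly when $\ell\mid v_{\id{q}}(q)$.

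For $\id{q}\nmid\ell$ this is elementary: a finite flat group scheme of order prime to the residue characteristic is \'etale, so ``finite flat'' coincides with ``unramified'', and inertia acts trivially on $E[\ell]$ --- equivalently $q^{1/\ell}$ generates an unramified extension of $K_{\id{q}}$ --- precisely when $\ell\mid v_{\id{q}}(q)$. The hard part is the case $\id{q}\mid\ell$, where $\mu_\ell$ is connected, $E[\ell]$ is ramified regardless, and the unramifiedness criterion is useless. Here I would appeal to the structure theory of finite flat group schemes over a mixed-characteristic discrete valuation ring (Oort--Tate and Raynaud), or equivalently to Grothendieck's flatness criterion phrased through the N\'eron model, to show that the extension admits a finite flat model if and only if its class lies in the image of fppf cohomology, which again amounts to $\ell\mid v_{\id{q}}(q)$. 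Assembling the local conclusions over every $\id{q}$ then yields that $E[\ell]$ is finite flat over $\Om_K$ if and only if $\ell\mid v_{\id{q}}(\Disc(E))$ for all primes $\id{q}$, that is, if and only if the ideal $(\Disc(E))$ is an $\ell$-th power.
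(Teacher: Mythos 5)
Your skeleton is the standard one, and it is in fact more than the paper itself supplies: the paper's ``proof'' of Theorem~\ref{thm:Mazur} is simply the citation \cite[Proposition 9.1]{Mazur2}. Your reduction to a prime-by-prime statement, the triviality at primes of good reduction, and the case of a multiplicative prime $\id{q}\nmid\ell$ (finite flat $=$ \'etale $=$ unramified, together with the Tate-curve identity $v_{\id{q}}(q)=v_{\id{q}}(\Disc(E))$; the unramified quadratic twist occurring for nonsplit multiplicative reduction is harmless) are all correct and complete.

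The genuine gap is at the primes $\id{q}\mid\ell$, which you rightly identify as the hard case but then dispatch by assertion. The claim you need --- that the extension of $\Z/\ell$ by $\mu_\ell$ with Kummer class $q$ admits a finite flat model over $\Om_{K,\id{q}}$ if and only if $\ell\mid v_{\id{q}}(q)$ --- \emph{is} the theorem at these primes, and none of the tools you invoke delivers it. ``Grothendieck's flatness criterion phrased through the N\'eron model'' does not apply: a finite flat model with nontrivial connected part is not smooth over the base, so the N\'eron mapping property produces no map from it to the N\'eron model. Oort--Tate classifies group schemes of prime order, not extensions of order $\ell^2$. The implication ``Kummer class lies in the image of $H^1_{\mathrm{fppf}}(\Om_{K,\id{q}},\mu_\ell)\cong\Om_{K,\id{q}}^*/(\Om_{K,\id{q}}^*)^\ell$ $\Rightarrow$ a finite flat model exists'' is indeed fine, since an fppf extension of $\Z/\ell$ by $\mu_\ell$ is representable by a finite flat group scheme whose generic fibre is the given module. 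But the converse is where all the work lies: an arbitrary finite flat model $G$ is not given to you as an extension of group schemes. One must form the schematic closure of $\mu_\ell\subset E[\ell]$ in $G$; the resulting closed subgroup scheme and the quotient are merely \emph{prolongations} of $\mu_\ell$ and of $\Z/\ell$, not necessarily $\mu_\ell$ and $\Z/\ell$ themselves. Raynaud's uniqueness of prolongations requires $e<\ell-1$, which fails in cases the theorem explicitly covers (for $\ell=2$ over every $K$, and for instance for $\ell=3$ at the ramified prime of $\Q(\sqrt{-3})$); already over $\Z_2$ the trivial Galois module of order $2$ has the two distinct prolongations $\Z/2$ and $\mu_2$. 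So to conclude $\ell\mid v_{\id{q}}(q)$ one must classify all prolongations of the sub and of the quotient (this is where Oort--Tate actually enters) and compute every $\mathrm{Ext}^1$ between them in the category of finite flat group schemes over $\Om_{K,\id{q}}$, checking that all resulting generic Kummer classes are unit classes. Over $\Z_\ell$ this is the Fontaine--Raynaud ``peu ramifi\'ee'' theory; over a general $\Om_{K,\id{q}}$ with large ramification it is a substantial theorem --- precisely the content the paper outsources to \cite[Proposition 9.1]{Mazur2}. As written, your sentence ``admits a finite flat model if and only if its class lies in the image of fppf cohomology'' assumes exactly what has to be proved.
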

\begin{proof}
  See \cite[Proposition 9.1]{Mazur2}.
\end{proof}

If $E/K$ is an elliptic curve of prime conductor~$\id{p}$, and $\ell$
is a prime dividing the valuation $v_{\id{p}}(\Disc(E))$ of $\Disc(E)$,
then $E[\ell]$ is a finite flat group scheme over $\Spec(\Om_K)$ by
Mazur's theorem. To apply a level-lowering result such as
Conjecture~\ref{conj:loweringlevel}, one also needs a \emph{big image}
hypothesis, i.e. that the residual representation at $\ell$ is
absolutely irreducible. While working over the rationals, either the
residual image is reducible, in which case the curve has a 
$\Q$-point (which does not happen for $\ell>7$) or it is absolutely
irreducible. The reducible case for small primes can be discarded by a
result of Fontaine (\cite[Theorem B]{Fontaine}).

One of the main results of this article is a generalization of
Theorem~\ref{thm:MO} to $K$.
\begin{thm}
  Let $K$ be an imaginary quadratic field of class number $1$. Let
  $E/K$ be a modular elliptic curve of prime conductor. Assume
  Conjecture \ref{conj:loweringlevel} holds. Then there exists a curve
  isogenous to~$E$ over~$K$ with prime or prime square discriminant.
\label{thm:mainthm}
\end{thm}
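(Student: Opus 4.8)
The plan is to reduce the theorem to the case of a semistable curve and then to analyze the residual mod-$\ell$ representation for each prime $\ell$ dividing the discriminant valuation. First I would dispose of the potentially good reduction case: if $E/K$ has prime conductor $\id{p}$ but is not semistable at $\id{p}$, then $E$ has additive potentially good reduction, and there is a classical bound (via the local conductor-discriminant relation and the theory of the N\'eron model) on $v_{\id{p}}(\Disc(E))$ in terms of the local behaviour, forcing $\Disc(E)$ to be a bounded power of $\id{p}$; the finitely many resulting cases can be checked directly. So the substance of the proof is the semistable case, where $\Disc(E)=\id{p}^m$ for some $m\ge 1$ and we must show that, up to isogeny, $m\le 2$.

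Assume then that $E$ is semistable of prime conductor $\id{p}$, and suppose for contradiction that $m=v_{\id{p}}(\Disc(E))$ has a prime divisor $\ell$ with the isogeny class having no member of discriminant valuation coprime to $\ell$ (equivalently, no member with $\ell\nmid v_{\id{p}}(\Disc(\cdot))$). Fix such an $\ell$. By Theorem~\ref{thm:Mazur}, since $(\Disc(E))=\id{p}^m$ is an $\ell$-th power whenever $\ell\mid m$, the group scheme $E[\ell]$ is finite and flat over $\Spec(\Om_K)$. Now split according to the nature of the residual representation $\overline{\rho_{E,\ell}}$. If $\overline{\rho_{E,\ell}}$ is absolutely irreducible, then $E$ is modular (by Conjecture~\ref{conj:modularity}, or its known cases), so $E$ corresponds to a Bianchi newform $F$ of level $\id{p}$; applying the level-lowering Conjecture~\ref{conj:loweringlevel} produces a class in $H^1(Y_0(1),\F_\ell)$ with representation $\overline{\rho_{F,\ell}}\cong\overline{\rho_{E,\ell}}$, contradicting the vanishing in Theorem~\ref{thm:noleveloneforms} for $\ell\ge 5$. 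This handles all $\ell\ge 5$ cleanly and forces the only surviving prime divisors of $m$ to be $\ell\in\{2,3\}$, together with the reducible (not absolutely irreducible) subcases.

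The remaining work, and the main obstacle, is the small-image analysis for $\ell\in\{2,3\}$ and the reducible cases, which the introduction flags as being handled unconditionally in later sections. For $\overline{\rho_{E,\ell}}$ reducible over $\F_\ell$, the curve acquires an $\ell$-isogeny, and I would follow the isogeny to a curve whose discriminant valuation becomes coprime to $\ell$: over $\Q$ this is the Mestre--Oesterl\'e argument via the $X_0(\ell)$ parametrization and finite flat group scheme classification (Oort--Tate), and the same machinery works over $\Om_K$ except that for $\ell=2$ one must control the type $(2,2)$ group schemes carefully. The genuinely new difficulty, special to imaginary quadratic $K$, is the case where $\overline{\rho_{E,\ell}}$ is irreducible over $\F_\ell$ but \emph{reducible over $\F_{\ell^2}$} (image cyclic of order $\ell+1$ type): over $\Q$ complex conjugation excludes this, but $K$ is totally complex so this argument is unavailable. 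For $\ell$ odd I would eliminate it by a direct study of the image and the associated small Galois extension (the nonexistence of suitable unramified-outside-$\ell$ extensions of $K$), while for $\ell=2$ this case cannot be excluded and is precisely what produces the exceptional prime-square discriminants: there one shows the discriminant valuation is at most $2$ via the finite flat type $(2,2)$ group scheme classification over $\Spec(\Om_K)$. Assembling these pieces — that for $\ell\ge 5$ no prime divides $m$, that $\ell=3$ is removed up to isogeny, and that $\ell=2$ contributes at most a factor making $\Disc$ a prime square — yields the stated dichotomy of prime or prime-square discriminant.
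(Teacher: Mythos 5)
Your overall architecture does match the paper's: split according to the primes $\ell$ dividing $v_{\id{p}}(\Disc(E))$ and the type of the residual image, use modularity plus Conjecture~\ref{conj:loweringlevel} and Theorem~\ref{thm:noleveloneforms} to kill the absolutely irreducible case, handle reducible images via isogenies and torsion classification, and let the cyclic-order-$3$ image at $\ell=2$ account for the prime-square discriminants. However, there is a genuine gap: you never treat the \emph{absolutely irreducible} case for $\ell=2$ and $\ell=3$. Your level-lowering argument is restricted to $\ell\ge5$, and necessarily so: Conjecture~\ref{conj:loweringlevel} is only stated for $\ell\ge5$, and $H^1(Y_0(1),\F_\ell)$ is only asserted to vanish for $\ell\ge5$ (the isotropy groups of $Y_0(1)$ have $2$- and $3$-torsion, which is exactly why the paper stresses that level-lowering is problematic at small primes). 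Yet your subsequent case list for $\ell\in\{2,3\}$ covers only the reducible and the irreducible-but-absolutely-reducible images, tacitly assuming the image at $2$ and $3$ is small. This is precisely what the paper proves unconditionally inside the proof of Theorem~\ref{thm:mainthm}: for $\ell=2$, writing $\Disc(E)=\varepsilon\pi^2$ with $\varepsilon\in\Om_K^*$, absolute irreducibility would make $K(E[2])/K$ an $S_3$-extension unramified outside~$2$ with quadratic subfield $K(\sqrt{\varepsilon})$, and an explicit class field theory computation shows no such extension exists over any of the nine fields; for $\ell=3$, the image is confined to the normaliser of a non-split Cartan subgroup (the subgroup cut out by $K(\sqrt[3]{\Disc(E)})$), and since the only quadratic extension of $K$ unramified outside~$3$ is $K(\zeta_3)$, which cuts out the \emph{wrong} index-$2$ subgroup because the determinant is the cyclotomic character, absolute irreducibility is again impossible. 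Without these two arguments your proof is incomplete at exactly the primes the paper flags as the hard ones.

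Three smaller points. First, your opening reduction is vacuous: a curve of prime conductor has conductor exponent $1$ at $\id{p}$, hence multiplicative reduction there and good reduction elsewhere, so it is automatically semistable; the potentially-good-reduction bound is only needed for the prime-\emph{power} conductor statement (Corollary~\ref{coro:Szpiro}). Second, you omit the case $\id{p}\mid2$, which the paper disposes of by Koutsianas's computation that no curve of conductor $\id{p}\mid2$ exists over any of the nine fields; this is needed because the torsion classifications invoked for small $\ell$ apply only to odd conductor. Third, ``the same machinery works over $\Om_K$'' in the reducible case understates where most of the paper's effort goes: the Kraus-type argument (using class number one) produces a rational $\ell$-torsion point up to isogeny, but one must then classify \emph{all} curves of odd prime power conductor with such a point, and over $K$ this is not the finite check it is over $\Q$ --- for instance, curves with a rational $3$-torsion point over $\Q(\sqrt{-3})$ plausibly form an infinite family, and Theorem~\ref{thm:3-torsionspecialcase} (via rational points on twisted Fermat cubics) is required to show that their discriminant valuations are not, up to isogeny, divisible by~$3$; similarly the $\ell=2$ reducible case rests on the full Setzer--Neumann-type classification of Section~\ref{section 2 torsion} rather than on an Oort--Tate group-scheme count.
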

\begin{proof}
Let $\id{p}$ be the conductor of $E$.  For the case $\id{p}\mid 2$,
A.~Koutsianas used the methods of \cite{Angelos} to compute for us all
elliptic curves with conductor a power of $\id{p}$, and found that
over each of the nine fields there are none with conductor $\id{p}$.
This is consistent with the tables of automorphic forms of
\cite{CremonaTessellations} and \cite{Cremona-Whitley}.

From now on we may assume that $\id{p} \nmid 2$. Suppose that
$\Disc(E)$ is not prime, and let $\ell$ be a prime number dividing
$v_{\id{p}}(\Disc(E))$.  Recall that by semistability and Mazur's
result, the Galois module $E[\ell]$ is unramified away from~$\ell$,
and also that its determinant is the $\ell$th cyclotomic character. We
claim that the hypotheses of the theorem imply that for every prime
number $\ell$ dividing $v_{\id{p}}(\Disc(E))$, the module \textit{$E[\ell]$ is
  not absolutely irreducible}.

Assume the claim.  For odd $\ell$, Theorem~\ref{thm:Kraus} below implies
that either~$E$, or an $\ell$-isogenous curve, has a rational torsion
point of order~$\ell$. In Section \ref{smallimage} all curves of odd
prime conductor with an $\ell$-torsion point are computed (see
Table~\ref{table:oddtorsioncurves}) with the exception of curves with
a rational $3$-torsion point over $\Q(\sqrt{-3})$. It is easy to
verify from Table~\ref{table:oddtorsioncurves} that all curves of
prime conductor listed there have (up to an $\ell$-isogeny)
discriminant with valuation prime to $\ell$. Finally,
Theorem~\ref{thm:3-torsionspecialcase} proves that any curve with a
rational $3$-torsion point over $\Q(\sqrt{-3})$ either has
discriminant valuation prime to $3$, or a $3$-isogenous curve does.
Hence, up to isogeny, the claim (for all odd $\ell$) implies that
$v_{\id{p}}(\Disc(E))$ has no odd prime factors.

Suppose that $v_{\id{p}}(\Disc(E))$ is even.  The hypothesis implies
that $\Disc(E)=\varepsilon\pi^2$ for some~$\varepsilon\in\Om_K^*$ and
$\pi\in\Om_K$.  If $E[2]$ is absolutely irreducible, then $\Disc(E)$
is not a square, and the extension $K(\sqrt{\varepsilon})/K$ has
degree~$2$ and is unramified away from~$2$.  However, using explicit
class field theory (as implemented in \cite{PARI2}), we may check that
none of the nine fields~$K$ has an extension~$L/K$ with Galois group
$\GL_2(\F_2)\cong S_3$ which is unramified away from~$2$ and has
quadratic subfield $K(\sqrt{\varepsilon})$.  Hence $E[2]$ is not
absolutely irreducible, establishing the claim for $\ell=2$.  Now
either $E[2]$ is reducible, in which case
Corollary~\ref{coro:2torsion-odddiscriminant} shows that up to
$2$-isogeny $v_{\id{p}}(\Disc(E))=1$; or $E[2]$ is irreducible (but
not absolutely irreducible) in which case Theorem~\ref{thm:absred2}
implies that $v_{\id{p}}(\Disc(E))=2$.

It remains to prove the claim for odd primes~$\ell$ which divide
$v_{\id{p}}(\Disc(E))$.  We consider first the case $\ell=3$, where as
with $\ell=2$ an elementary argument establishes the claim
unconditionally.  If $K\ne\Q(\sqrt{-3})$, then every unit in~$K$ is a
cube, so the hypothesis that $3\mid v_{\id{p}}(\Disc(E))$ implies that
$\Disc(E)$ is a cube in~$K$.  In case $K=\Q(\sqrt{-3})$ we can only
say that $\Delta(E)$ is a unit times a cube, and we may have
$K(\sqrt[3]{\Disc(E)})=\Q(\zeta_9)$.  In general,
$K(\sqrt[3]{\Disc(E)})$ is the subfield of $K(E[3])$ cut out by the
Sylow $2$-subgroup, which has index~$3$ and order~$16$, in
$\GL_2(\F_3)$; this subgroup is the normaliser of the non-split Cartan
subgroup.  Hence the field $K(E[3])$ is obtained by a succession of
quadratic extensions of $K(\sqrt[3]{\Disc(E)})$ each unramified away
from~$3$.  Neither $\Q(\sqrt{-3})$ nor $\Q(\zeta_9)$ possesses any
such quadratic extension, so for $K=\Q(\sqrt{-3})$ the image of the
mod-$3$ representation is either trivial (when $\Delta$ is a cube) or
of order~$3$, in which case it is conjugate to
$\left(\begin{smallmatrix} 1 & *\\ 0 & 1\end{smallmatrix} \right)$.

For the other fields, the residual image is contained in the
normaliser of the non-split Cartan subgroup, of order~$16$.  If $E[3]$
were absolutely irreducible then the residual image could not be
contained in the non-split Cartan subgroup itself, since that is
absolutely reducible; hence there would be a (non-trivial) quadratic
character unramified outside~$3$ whose kernel has residual image equal
to the non-split Cartan subgroup.  However, for each of the eight
fields~$K$ in question, the only quadratic extension ramified only
above $3$ is $K(\zeta_3)$, which cuts out a \textit{different}
index~$2$ subgroup of the non-split Cartan normaliser, since the
non-split Cartan contains matrices of determinant~$-1$. (Here we are
using the fact that the determinant of the representation is the
cyclotomic character.)  This contradiction shows that $E[3]$ is not
absolutely irreducible.

Finally, we prove the claim when $v_{\id{p}}(\Disc(E))$ is divisible
by a prime~$\ell\ge5$.  By Theorem~\ref{thm:Mazur}, $E[\ell]$ is a
finite flat group scheme. If the residual representation of $E$ at the
prime $\ell$ is absolutely irreducible, the modularity assumption on
$E$ gives the hypothesis of Conjecture \ref{conj:loweringlevel}, so
there must exist a mod $\ell$ Bianchi modular form of level $1$ whose
Galois representation matches the residual representation of $E$
modulo $\ell$; this contradicts Theorem~\ref{thm:noleveloneforms}.
\end{proof}

As a corollary, we have the following version of Szpiro's conjecture.
\begin{coro}
  Let $K$ be an imaginary quadratic field of class number $1$. Assume
  that Conjecture~\ref{conj:loweringlevel} holds. Let $E/K$ be a
  modular elliptic curve of prime power conductor. Then
  $\norm(\Disc_{\min}(E)) \le \norm(\cond(E))^{6}$, except for the
  curve \lmfdbec{11}{a}{2} over $\Q(\sqrt{-11})$, whose conductor has
  valuation~$1$ while its discriminant has valuation~$10$.
\label{coro:Szpiro}
\end{coro}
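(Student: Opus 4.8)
The plan is to deduce Szpiro's bound directly from Theorem~\ref{thm:mainthm}, which already controls the discriminant up to isogeny for \emph{prime} conductor, and then to extend the analysis to genuine prime \emph{power} conductor using the standard local theory of conductor and discriminant exponents. First I would separate the two reduction types at the supporting prime~$\id{p}$. In the semistable (multiplicative) case the conductor exponent is~$1$, so $\cond(E)=\id{p}$ is actually prime, and Theorem~\ref{thm:mainthm} applies: up to isogeny $\Disc(E)$ is $\id{p}$ or $\id{p}^2$. I would then use the fact that an isogeny changes $\norm(\Disc)$ only by a bounded factor (indeed, since over $K$ the conductor is an isogeny invariant, and the valuation of the discriminant for a multiplicative-reduction curve can only increase by a controlled amount under isogeny, as governed by the degree), and more importantly that the minimal discriminant norm is bounded in terms of the conductor norm. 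Concretely, once we know some isogenous curve has $\vp(\Disc)\in\{1,2\}$, the target curve $E$ itself satisfies $\vp(\Disc(E))=\vp(\Disc)\cdot(\text{bounded by the isogeny})$; for the Szpiro exponent~$6$ it suffices to check that no isogeny in the class pushes $\norm(\Disc_{\min})$ above $\norm(\cond)^6$, which is a finite check given the small isogeny degrees~$2,3,5$ permitted by Mazur-type torsion bounds.

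Next I would treat potentially good (additive) reduction, where the conductor exponent at~$\id{p}$ satisfies $f_{\id{p}}\ge 2$ and the discriminant valuation is bounded by the standard Ogg--Saito/Tate formula. The key elementary input is that for additive reduction the minimal discriminant exponent $\vp(\Disc_{\min}(E))$ is at most a fixed constant (at primes away from $2$ and $3$ it is at most~$6$ for potentially good reduction, corresponding to the possible quadratic/quartic/sextic twists, and is a bit larger at the primes above~$2$ and~$3$). Since simultaneously $\vp(\cond(E))\ge 2$, the ratio $\vp(\Disc_{\min})/\vp(\cond)$ is controlled, and squaring-out the norm (recalling $\norm(\id{p})=p$ or $p^2$ depending on the splitting type of $p$ in $K$) gives $\norm(\Disc_{\min})\le \norm(\cond)^6$ directly. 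This is where I would invoke the explicit Kraus/Tate tables for the residue characteristics $2$ and $3$ to make the bound uniform across the nine fields.

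The main obstacle, and the reason for the single stated exception, is the genuinely multiplicative case with \emph{small} conductor norm, where the asymptotic inequality $\norm(\Disc)\le\norm(\cond)^6$ can fail because the exponent~$6$ is not generous enough relative to the absolute constants. Here Theorem~\ref{thm:MO}, or rather its generalization Theorem~\ref{thm:mainthm}, only guarantees the \emph{existence} of an isogenous curve with small discriminant, but for $\Disc_{\min}(E)$ of the \emph{specific} curve $E$ we must rule out large discriminant valuations by hand. The curve \lmfdbec{11}{a}{2} over $\Q(\sqrt{-11})$ is exactly the base-change phenomenon discussed in the introduction: it has $\vp(\cond)=1$ but $\vp(\Disc_{\min})=10$ (inherited from the rational curve of conductor~$11$ and discriminant $-11^5$, whose valuation doubles after base-change because $11$ ramifies), so $\norm(\Disc)=11^{10}>11^6=\norm(\cond)^6$. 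I would therefore carry out a finite search over small conductor norms (using the level-$1$ triviality of Theorem~\ref{thm:noleveloneforms} and the tables of \cite{CremonaTessellations}, \cite{Cremona-Whitley}) to confirm that this is the only curve of prime power conductor over the nine fields violating the bound, thereby establishing the corollary with the stated single exception.
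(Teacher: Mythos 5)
Your outline has the right skeleton --- the well-known bound in the potentially good case, and Theorem~\ref{thm:mainthm} in the semistable case --- but it contains two genuine gaps. First, your dichotomy ``multiplicative versus potentially good (additive)'' is not exhaustive: a curve of conductor $\id{p}^2$ can have additive but \emph{potentially multiplicative} reduction, i.e.\ be a quadratic twist by a generator of~$\id{p}$ of a semistable curve, and for such curves the claim that $v_{\id{p}}(\Disc_{\min}(E))$ is bounded by an absolute constant is false: the exponent is $6+v$, where $v=-v_{\id{p}}(j)$ is the (unbounded) discriminant exponent of the semistable twist. Such curves genuinely occur with prime power conductor (e.g.\ the twists by $u\pi$ in case~(3) of Theorem~\ref{thm:multiplicativecase}), so this case cannot be absorbed into Ogg--Saito/Tate-type tables. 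The paper disposes of it by the simple but necessary observation that if the semistable curve satisfies $\norm(\Disc_{\min})\le\normid{p}^{6}$, then its twist has discriminant norm multiplied by exactly $\normid{p}^{6}$ while its conductor becomes $\id{p}^2$, so the exponent-$6$ inequality is inherited.

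Second, and more seriously, your passage from ``some curve in the isogeny class has discriminant $\id{p}$ or $\id{p}^2$'' to a bound for the given curve $E$ cannot be completed by ``a finite search over small conductor norms''. A violation of $v_{\id{p}}(\Disc_{\min}(E))\le 6$ is a statement about the isogeny structure (for multiplicative reduction an $\ell$-isogeny scales the discriminant exponent by $\ell^{\pm1}$), and nothing confines violations to small $\normid{p}$: \emph{any} prime-conductor curve admitting a $5$-isogeny with $\id{p}$ ramified in $K$ would violate the bound, whatever the conductor norm, and likewise a long enough chain of $2$-isogenies would. What rules these out is not a search but precisely the classification results of the paper: Table~\ref{table:oddtorsioncurves} with Theorems~\ref{thm:3-torsion}, \ref{thm:5-torsion}, \ref{thm:7-torsion} and \ref{thm:3-torsionspecialcase} show that the semistable classes with an odd-degree isogeny have discriminant exponents $\{1,3\}$ or $\{2,6\}$ ($3$-isogenies) and that the unique semistable $5$-isogeny class is \lmfdbeciso{11}{a}, with exponents $\{1,5\}$, or $\{2,10\}$ over $\Q(\sqrt{-11})$ where $11$ ramifies; Corollary~\ref{coro:2torsion-odddiscriminant} and Theorem~\ref{thm:multiplicativecase} bound the $2$-isogeny classes (exponents $r$, $2r$ with $r$ odd, plus sporadic classes with exponent at most~$4$); and Theorem~\ref{thm:absred2} gives exponent exactly~$2$ when $E[2]$ is irreducible but not absolutely irreducible. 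This is how one sees that \lmfdbec{11}{a}{2} over $\Q(\sqrt{-11})$ is the \emph{only} exception; note also that its quadratic twists by $\pm\sqrt{-11}$ do not yield further exceptions only because those twists ramify at~$2$ (Lemma~\ref{lemma:ramificationat2}) and hence do not have prime power conductor --- another point your proposal would need to address, since your transfer argument only propagates the truth of the bound, not the uniqueness of its failure.
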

\begin{proof}
  If $E$ has potentially good reduction, then the result is
  well-known, see for example \cite[p.~365, Table~4.1]{Silverman2}.
  Otherwise, either $E$ is semistable or is a
  quadratic twist of a semistable curve. Recall that there are no
  curves of prime conductor dividing $2$, so we can restrict to odd
  conductor. It is enough to consider the semistable case, since if
  $E$ has prime conductor $\id{p}$ and satisfies
  $\norm(\Disc_{\min}(E)) \le \normid{p}^{6}$, the twisted curve $E'$
  satisfies
  $\norm(\Disc_{\min}(E'))=\norm(\Disc_{\min}(E)) \cdot \normid{p}^6$
  and $\cond(E')=\id{p}^2$.

  By Theorem~\ref{thm:mainthm} the result holds for some curve in the
  isogeny class.  Furthermore, when the isogeny class of a semistable
  curve has at most $2$ elements, Theorem~\ref{thm:absred2} and
  Corollary~\ref{coro:2torsion-odddiscriminant} prove that the
  discriminant valuation is at most $2$.

  When there is a $3$-isogeny, the isogenous curve has discriminant
  valuation $3$ or $6$ (depending on whether or not the conductor
  is ramified in $K$), and there is a unique semistable case with a
  $5$-isogeny (see Theorem~\ref{thm:5-torsion}), which is precisely
  the curve \lmfdbec{11}{a}{2}.
\end{proof}
We end this section with a result used in the proof of
Theorem~\ref{thm:mainthm} above.

\begin{thm}Let $K$ be an imaginary quadratic field of class number $1$
  and $E/K$ be a semistable elliptic curve. Let $\ell \ge 3$ be a
  prime number such that the residual representation of $E$ at $\ell$
  is not absolutely irreducible.  Then either $E$, or a curve
  $\ell$-isogenous to~$E$ over~$K$, has a point of order $\ell$
  defined over $K$.
\label{thm:Kraus}
\end{thm}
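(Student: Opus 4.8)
The plan is to analyze the residual representation $\bar\rho = \overline{\rho_{E,\ell}}\colon G_K \to \GL_2(\F_\ell)$ under the hypothesis that it is \emph{not} absolutely irreducible, and to show that the curve or an $\ell$-isogenous curve acquires a rational $\ell$-torsion point. The dichotomy ``not absolutely irreducible'' splits into two cases: either $\bar\rho$ is already reducible over $\F_\ell$, or it is irreducible over $\F_\ell$ but becomes reducible over $\F_{\ell^2}$. I would treat these separately.

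In the reducible case, $\bar\rho$ has an invariant line, so up to conjugation $\bar\rho \cong \bigl(\begin{smallmatrix}\chi_1 & *\\ 0 & \chi_2\end{smallmatrix}\bigr)$ with characters $\chi_1,\chi_2\colon G_K \to \F_\ell^*$. Since $E$ is semistable and (by Mazur's Theorem~\ref{thm:Mazur}, which applies because we are in the situation where $E[\ell]$ is finite flat) the representation is unramified away from $\ell$ and finite flat at $\ell$, the characters $\chi_1,\chi_2$ are unramified outside $\ell$ and satisfy $\chi_1\chi_2 = \bar\chi_{\text{cyc}}$, the mod-$\ell$ cyclotomic character. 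The key point is to pin down these characters using the fact that $K$ has class number $1$: an unramified-outside-$\ell$ character of $G_K$ valued in $\F_\ell^*$ factors through the ray class group of conductor a power of $\ell$, and class number one together with control of the finite-flat local condition at $\ell$ should force each $\chi_i$ to be either trivial or $\bar\chi_{\text{cyc}}$. If $\chi_1$ is trivial, the invariant line is fixed pointwise by $G_K$, giving a rational point of order $\ell$ on $E$ itself; if instead $\chi_2$ is trivial, then the quotient line is rational, so the $\ell$-isogenous curve $E' = E/(\text{invariant line})$ carries the rational $\ell$-torsion point (the image of the quotient). This is where the class-number-one hypothesis, emphasised in the introduction, does the essential work.

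In the absolutely irreducible over $\F_{\ell^2}$ case, $\bar\rho$ is irreducible but becomes reducible over the quadratic extension, so its image lies in the normaliser of a nonsplit Cartan subgroup of $\GL_2(\F_\ell)$ but not in the Cartan itself. My plan is to show this case simply cannot occur here, so it does not produce an exception to the conclusion. The restriction of $\bar\rho$ to $G_{K'}$, where $K'/K$ is the quadratic extension cut out by the Cartan-versus-normaliser, is reducible with the two characters conjugate under $\Gal(K'/K)$; the determinant being cyclotomic forces these characters to be constrained, and $K'$ is ramified only above $\ell$. One then argues, exactly as in the $\ell=3$ analysis inside the proof of Theorem~\ref{thm:mainthm}, that the requisite quadratic extension of $K$ ramified only above $\ell$ with the correct behaviour does not exist over an imaginary quadratic field of class number one, because the only available such extension is $K(\zeta_\ell)$-type and it cuts out the wrong index-$2$ subgroup (the nonsplit Cartan contains elements of determinant a nonsquare, incompatible with the cyclotomic determinant condition). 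Hence this branch is empty and the conclusion holds vacuously for it.

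The main obstacle will be the reducible case: rigorously showing that the characters $\chi_1,\chi_2$ must be $\{1,\bar\chi_{\text{cyc}}\}$ rather than some other pair compatible with $\chi_1\chi_2=\bar\chi_{\text{cyc}}$. A priori one could have $\chi_1 = \psi$, $\chi_2 = \psi^{-1}\bar\chi_{\text{cyc}}$ for a nontrivial unramified-outside-$\ell$ character $\psi$, and eliminating such $\psi$ requires a careful local-at-$\ell$ analysis combining the finite-flat condition (which restricts $\chi_i|_{I_\ell}$ to powers of the fundamental/cyclotomic character via Raynaud or Oort--Tate theory) with the global constraint from $h_K=1$. I expect the cleanest route is to use that the semistable finite-flat condition forces each $\chi_i$ on inertia at $\ell$ to be trivial or cyclotomic, and then class number one kills any residual unramified twist, so that after possibly replacing $E$ by its $\ell$-isogenous quotient one of the two characters is globally trivial. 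Once this is secured, the passage from a rational invariant or quotient line to a rational $\ell$-torsion point on $E$ or $E'$ is immediate.
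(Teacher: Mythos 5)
Your overall architecture coincides with the paper's: the same dichotomy (reducible over $\F_\ell$ versus irreducible but reducible over $\F_{\ell^2}$), the same reduction of the first branch to showing that one of the two diagonal characters is globally trivial, and your treatment of the second branch (image in the normaliser of a nonsplit Cartan, the quadratic extension it cuts out is unramified outside $\ell$, and the cyclotomic-determinant condition rules out the only candidate extension) is in substance what the paper does there and in the $\ell=3$ part of Theorem~\ref{thm:mainthm}. The genuine gap is in the reducible case, precisely at the step you yourself flag as the main obstacle; your proposed resolution would not close it. A preliminary point: the theorem does not assume $E[\ell]$ is finite flat, only that $E$ is semistable, so you cannot invoke Theorem~\ref{thm:Mazur}; the paper gets that the characters $\theta_i$ are unramified outside $\ell$ from semistability alone (Kraus's Lemme 1: inertia at a prime of multiplicative reduction acts unipotently). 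The decisive failure is that your mechanism --- ``the finite-flat condition forces $\chi_i|_{I_\ell}$ to be trivial or cyclotomic, and class number one kills any residual unramified twist'' --- does not eliminate the dangerous configurations. If $\ell=\id{l}_1\id{l}_2$ splits, take $\chi_1$ ramified only at $\id{l}_1$ and $\chi_2$ ramified only at $\id{l}_2$: each character is trivial or cyclotomic on each inertia group, so your local condition holds, yet neither is globally trivial, and the offending twist $\psi=\chi_1$ is \emph{ramified}, so ``unramified characters are trivial'' is irrelevant. The paper kills this configuration by a parity argument that your plan lacks: $\chi_i$ restricted to $I_{\id{l}_i}$ is cyclotomic, so $\chi_i(-1)=-1$; but $\chi_i$ has conductor $\id{l}_i$ and factors through the corresponding ray class group, hence is trivial on the image of the global units, forcing $\chi_i(-1)=+1$, a contradiction since $\ell$ is odd. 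This is where class number one genuinely enters --- through the unit group in the ray class exact sequence, not through the vanishing of unramified characters.

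Moreover, when $\ell$ ramifies in $K$ (so $K=\Q(\sqrt{-\ell})$, with ramification degree $e=2$ at $\ell$), your claimed local dichotomy is false even granting finite flatness: over such a base, the inertial character of a finite flat group scheme of order $\ell$ can be any power $\psi^j$ with $0\le j\le e$ of the level-one fundamental character $\psi$, and $j=1$ gives a square root of the cyclotomic character, which is neither trivial nor cyclotomic. This is exactly the hard case of the paper's proof: it shows that if both $\theta_i$ are ramified at the prime $\id{l}$ above $\ell$ then $E$ is supersingular there, applies Serre's Proposition 10 to conclude that both $\theta_i$ restrict to $\psi$ on inertia, deduces $\theta_1=\theta_2$ and $\theta_1^2=\chi_\ell$, so that $\theta_1$ has even order $\ell-1$ and cuts out a quadratic extension of $K$ unramified outside $\ell$, and finally invokes an explicit class-field-theory computation showing that no such quadratic extension exists for these fields. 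Your proposal contains no substitute for the supersingular analysis nor for this computational input, so the reducible case --- the heart of the theorem --- remains open in your write-up.
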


\begin{proof} The residual representation associated to $E$ takes
  values in $\GL(2,\F_\ell)$, and may be either reducible
  (over~$\F_{\ell}$), irreducible but absolutely reducible (i.e.,
  reducible over $\F_{\ell^2}$) or absolutely irreducible.  The
  hypothesis in the theorem excludes only the absolutely irreducible
  case.  Over $\Q$, or any number field with at least one real place, the second case
  cannot occur, due to the action of complex conjugation: any
  invariant line over $\F_{\ell^2}$ must be defined over~$\F_{\ell}$.
  In our situation, we need to consider both the reducible and the
  absolutely reducible cases separately.

  \noindent $\bullet$ {\bf Reducible case:} without
  loss of generality, the residual representation is of the form
  \begin{equation}
    \label{eq:reducibledecomposition}
      \overline{\rho_{E,\ell}} \simeq \left(\begin{smallmatrix} \theta_1 & *\\ 0
    & \theta_2\end{smallmatrix} \right),
  \end{equation}
  for $\theta_i$ characters of $\Gal(\overline{K}/K)$ such that
  $\theta_1 \theta_2 = {\chi_{\ell}}$ (the cyclotomic
  character). Since $E$ is semistable, the conductors of the characters
  $\theta_i$ are supported in $\ell$ (see \cite[Lemme 1]{Kraus}).  Also since $K$ has class
  number~$1$, the only unramified character is the trivial character.
  Hence we must show that at least one of the characters is
  unramified, since $\theta_1$ trivial implies that $E$ has a point of
  order~$\ell$ while if $\theta_2$ is trivial then the isogenous curve
  has such a point.  If $\ell$ is unramified in $K$, then the result
  follows from the proof of \cite[Corollaire 1]{Kraus} (pages
  249-250), as we now explain.

  If $\ell$ is inert in~$K$ then only one of the $\theta_i$ can be
  ramified at $\ell$ (see \cite[Lemme 1]{Kraus}), hence the statement.
  Then we can restrict to the case when both characters are ramified at a prime
  dividing $\ell$.

  If $\ell = \id{l}_1 \id{l_2}$ splits, we can assume that $\theta_i$
  has conductor $\id{l}_i$ since by \cite[Lemme 1]{Kraus} they cannot
  both be ramified at the same prime. Then on one hand the restriction
  of $\theta_i$ to the inertia group at $\id{l}_i$ is the cyclotomic
  character, so $\theta_i(-1)=-1$, and on the other hand it is a character
  of $(\Om_K/\id{l}_i)^\times$ which is trivial in $\Om_K^\times$ (as
  it factors through the Artin map), so $\theta_i(-1)=1$. This is
  impossible since $\ell\not=2$.

  Lastly, suppose that $\ell$ ramifies in $K$, so $\ell \equiv 3 \pmod
  4$ and $K=\Q(\sqrt{-\ell})$. In particular, the restriction of the
  cyclotomic character to~$\Gal(\overline{K}/K)$ has (odd) order
  $(\ell-1)/2$.  Let $\id{l}$ denote the prime of $K$ dividing~$\ell$
  and $I_{\id{l}}$ the associated inertia subgroup. Then the
  $\theta_i$ are both characters of level $1$ at $\id{l}$ (since
  characters of level $2$ have irreducible image), $\theta_1 \theta_2
  = \chi_{\ell}$ and $E$ has good supersingular reduction at
  $\ell$. In the notation of \cite{Serre2}, let $a_\ell$ denote the
  $\ell$-th coefficient in the series for multiplication by $\ell$ in
  the formal group of $\tilde{E}$, the reduced curve over
  $\Om_K/\id{l}\cong\F_{\ell}$. By \cite[Proposition 10, page
    272]{Serre2}, if $v_\ell(a_\ell) > 1$, then
  $\theta_i|_{I_{\id{l}}}$ for $i=1,2$ would both be squares of
  fundamental characters of level $2$, whose image is not in
  $\FF_{\ell}$; hence the valuation is $1$, and the same proposition
  implies that $\theta_i|_{I_{\id{l}}}$ equals the fundamental
  character of level $1$ for $i=1,2$, and $*$ is unramified at
  $\ell$. So $\theta_1/\theta_2$ is unramified, hence trivial, and so
  $\theta_1 = \theta_2$ and $\theta_1^2=\chi_{\ell}$.

  The order of $\theta_1$ equals $\ell -1$ and it factors through a
  cyclic degree $(\ell-1)$ extension of $K$ unramified outside $\ell$,
  containing $\zeta_{\ell}$ (the $\ell$-th roots of unity). This implies in
  particular the existence of a quadratic extension of $K$
  unramified outside $\ell$. It can be easily verified (using
  \cite{PARI2} for example) that there are no such quadratic
  extensions, hence this case cannot occur.

  \medskip
  
  \noindent $\bullet$ {\bf Irreducible but absolutely reducible
    case}. In this case, there is a character of $K$ of order
  ${\ell^2-1}$ or $\frac{\ell^2-1}{2}$ unramified outside $\ell$ (by
  the same argument as before). This implies the existence of a quadratic
  extension of $K$ unramified outside $\ell$ which, as we saw in the previous case, does not exist.
\end{proof}

\section{Curves with complex multiplication}
\label{section CM}

Let $K$ denote one of the nine imaginary quadratic fields with class
number one.  Let $E/K$ be an elliptic curve with complex
multiplication by an order~$\Om$ in~$K$. Since $K$ has class number
one, the $j$-invariant $j(E)$ is rational, in fact in $\Z$, so $E$ is
a twist of the base extension of an elliptic curve defined over $\Q$.

For each field~$K$, we fix one such curve $E$, choosing it to have bad
reduction only at the unique ramified prime $\id{p}$ of~$K$, and to
have endomorphism ring isomorphic to the maximal order~$\Om_K$.  Every
other elliptic curve with CM by an order in $K$ is then isogenous to a
twist of this base curve~$E$.  For $d=1,2,3,7,11,19,43,67,163$
respectively we take the base curve to be the base-change to $K$ of
the elliptic curve over $\Q$ with LMFDB label \lmfdbec{64}{a}{4},
\lmfdbec{256}{d}{1}, \lmfdbec{27}{a}{4}, \lmfdbec{49}{a}{4},
\lmfdbec{121}{b}{2}, \lmfdbec{361}{a}{2}, \lmfdbec{1849}{b}{2},
\lmfdbec{4489}{b}{2}, \lmfdbec{26569}{a}{2}, respectively.  Our goal
in this section is to determine which twists of these base curves $E$
have odd prime power conductor, recalling that for $d=1$ and $d=3$
respectively, we must consider quartic and sextic twists, not only
quadratic twists.

\begin{remark}
For $d>3$ the base curve listed is uniquely determined (up to
isomorphism over~$K$) by the condition that it has CM by $\Om_K$ and
bad reduction only at the ramified prime $\id{p}=(\sqrt{-d})$.
However for $d=1,2,3$ there are several choices.  In the results of
this section, we consider elliptic curves with prime power conductor
as explicit twists of the base curve, so it is important to fix this
choice.
\end{remark}

The automorphic form attached to $E/K$ consists of the sum of two
conjugate Hecke Grossencharacters $\{\chi,\bar{\chi}\}$ of infinity
types $(1,0)$ and $(0,1)$, and conductor $\id{n}$ which is a power
of~$\id{p}$. In particular, $\cond(E) = \id{n}^2$.  Note that the
Grossencharacters take values in $K^\times$.  The character $\chi$ is
unramified outside $\id{p}$, so has conductor a power of~$\id{p}$, and
the local character $\chi_{\id{p}}$ restricted to
$\Om_{\id{p}}^\times$ is a finite character taking values in the roots
of unity of $K$. In particular, it is quadratic except for $d=-1,
-3$.

\subsection{$K=\Q(\sqrt{-d})$ with $d \neq 1, 3$} The character $\chi$
is quadratic, and unramified outside the prime $\id{p}=(\sqrt{-d})$,
the unique ramified prime of $K$. If we twist $E$ by a quadratic
character whose ramification at $\id{p}$ matches that of $\chi$, we
get a curve with good reduction at $\id{p}$. Note that there is no
global quadratic Grossencharacter unramified outside $\id{p}$ which
locally matches $\chi_{\id{p}}$, as the Archimedean part of all such
characters is trivial. In particular, although we can move the
ramification by twisting, there is no twist with everywhere good
reduction.

\begin{remark}
  Rational $2$-torsion is preserved under twisting, so the quadratic
  twists of $E$ have rational $2$-torsion if and only if $E$ does. The
  fields $K$ for which the curves $E$ have a $K$-rational two torsion
  point are $K=\Q(\sqrt{-d})$ for $d=2, 7$.
\label{rem:2-torsionCM}
\end{remark}

\begin{thm}
  Let $K=\Q(\sqrt{-d})$, with $d \neq 1,3$, and let $E/K$ be the base
  elliptic curve with CM by~$\Om_K$ defined above. Then all elliptic
  curves with complex multiplication over $K$ of odd prime power
  conductor are isogenous to $E$ or to the quadratic twist of $E$ by
  $\pi\sqrt{-d}$, where $\pi$ is a prime such that $\pi \equiv
  u^2\sqrt{-d} \pmod 4$ for $d\not=2$, respectively $\pi \equiv
  u^2(1+\sqrt{-d}) \pmod 4$ for $d=2$, with $u$ odd.  In particular,
  for $d=7$, the condition reads $\pi \equiv \sqrt{-7} \pmod 4$, for
  $d=2$ the condition reads $\pi \equiv \pm1 + \sqrt{-2} \pmod 4$ and
  for $d\ge11$, the condition reads $\pi \equiv w^{2k}\sqrt{-d}\pmod
  4$, for $0\le k\le2$, where $w =\frac{1+\sqrt{-d}}{2}$.
\label{theorem:CMnorootsofunity}
\end{thm}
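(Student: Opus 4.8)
The plan is to classify the odd-prime-power-conductor twists of the fixed base curve $E$ by translating "odd prime power conductor" into a purely local condition at the prime $\p=(\sqrt{-d})$ and at the odd primes, then compute exactly which quadratic characters achieve it.

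**Strategy and key steps.** First I would recall from the preceding discussion that every CM elliptic curve over $K$ is isogenous to a quadratic twist $E_\delta$ of the base curve $E$ (for $d\neq1,3$ only quadratic twists arise, since $\Om_K^*=\{\pm1\}$). The conductor of $E_\delta$ is determined prime-by-prime by the twisting character $\chi_\delta=\left(\frac{\cdot}{\delta}\right)$, so I would decompose $\delta$ into its prime factors and analyse the reduction type of $E_\delta$ at each prime separately. Away from $2$ and away from the primes dividing $\delta$, the curve $E_\delta$ has the same reduction as $E$; since $E$ has bad reduction only at $\p$, I must arrange that $E_\delta$ has good reduction everywhere except at one odd prime.

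**The crucial local computation** is at $\p=(\sqrt{-d})$: as the excerpt explains, the CM Hecke character $\chi$ is ramified at $\p$ with a specific quadratic ramification, so twisting by a quadratic character whose local behaviour at $\p$ matches $\chi_\p$ produces \emph{good} reduction at $\p$, whereas leaving it alone keeps bad (additive) reduction at $\p$. This forces the twisting element $\delta$ to contain a factor of $\sqrt{-d}$ in order to cancel the ramification at $\p$ and shift the conductor elsewhere. At each odd prime $\pi\mid\delta$ with $\pi\neq\p$, the twist introduces additive reduction unless it is already accounted for, so to keep prime power conductor I can allow at most one such $\pi$, namely a single odd prime. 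Thus $\delta$ must be (up to units and squares) of the form $\pi\sqrt{-d}$ for a single odd prime $\pi$. The remaining and most delicate step is the condition \emph{at} $2$: I would use the theory of quadratic twists and local conductor exponents at the (possibly ramified or split) prime above $2$ to determine precisely which congruence class of $\pi$ modulo $4$ makes $E_{\pi\sqrt{-d}}$ have good reduction at every prime dividing $2$. This is where the congruences $\pi\equiv u^2\sqrt{-d}\pmod 4$ (and the modified form $\pi\equiv u^2(1+\sqrt{-d})\pmod 4$ for $d=2$, reflecting $e_2=2$) come from: they are exactly the conditions ensuring the mod-$4$ data of the twist is a square, equivalently that twisting does not introduce ramification at $2$.

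**The main obstacle** I expect is the analysis at the prime(s) above $2$, since this is precisely where the reduction type of a quadratic twist is subtle and where the arithmetic of $K$ (inert, ramified, or split $2$) genuinely differs across the fields. The congruence conditions modulo $4$ must be derived by computing Kraus-type conditions or by direct inspection of minimal models of the twisted Weierstrass equations, checking when a global minimal model remains smooth modulo each prime above $2$; handling the ramified cases $d=2$ (and the excluded $d=1$) requires care because $e_2=2$ changes the relevant modulus. Once the local good-reduction criterion at $2$ is pinned down, the specialisations listed (e.g. $\pi\equiv\sqrt{-7}\pmod 4$ for $d=7$, and the cube-coset description $\pi\equiv w^{2k}\sqrt{-d}\pmod4$ for $d\ge 11$ using $w=\frac{1+\sqrt{-d}}{2}$) follow by unwinding what "$u^2\sqrt{-d}$ for odd $u$" means modulo $4$ in each $\Om_K/4$, which is a finite check field by field.
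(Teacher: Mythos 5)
Your proposal follows essentially the same route as the paper's proof: reduce to quadratic twists of the base curve, force the twisting element to be $\pi\sqrt{-d}$ for a single odd prime $\pi$ (to cancel the CM ramification at $(\sqrt{-d})$ without creating bad reduction at more than one odd prime), and then characterise good reduction above $2$ by the condition that the twist is congruent to a unit square modulo $4$ --- which is precisely the paper's Lemma~\ref{lemma:ramificationat2}, applied field by field exactly as you describe, including anchoring the $d=2$ case on a specific twist of odd conductor before imposing the mod-$4$ square condition.
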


Before giving the proof, we need an auxiliary result.

\begin{lemma}
  Let $K$ be a $2$-adic field, and $\alpha \in \Om_K$ be a $2$-adic integer
  which is a unit and is not a square. Then the extension
  $K(\sqrt{\alpha})$ is unramified if and only if there exist a unit
  $u \in \Om_K$ such that $u^2 \equiv \alpha \pmod 4$.
\label{lemma:ramificationat2}
\end{lemma}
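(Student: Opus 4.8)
The plan is to analyse the quadratic extension $K(\sqrt{\alpha})/K$ through the filtration of the unit group and the behaviour of squaring at the prime $2$. Write $\mathfrak m$ for the maximal ideal of $\Om_K$, $\pi$ for a uniformizer, $\F_q$ for the residue field (of characteristic $2$), and set $e=v_K(2)$, so that $(4)=\mathfrak m^{2e}$. Two preliminary facts will drive everything. First, since $\F_q$ is perfect, every unit is congruent to a square modulo $\mathfrak m$, so the largest integer $m\le 2e$ for which $\alpha\equiv u^2\pmod{\mathfrak m^m}$ is solvable with $u\in\Om_K^\times$ satisfies $m\ge1$. Second, a successive-approximation argument shows that for $n>e$ squaring maps $1+\mathfrak m^n$ onto $1+\mathfrak m^{n+e}$ (the induced map on graded pieces is multiplication by $2$), whence $1+\mathfrak m^{2e+1}\subseteq(\Om_K^\times)^2$; consequently, if $\alpha$ is a non-square then $m\le 2e$ exactly. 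The condition $\alpha\equiv u^2\pmod4$ is then equivalent to $m=2e$, so the lemma reduces to showing that, for a non-square unit $\alpha$, the extension $K(\sqrt{\alpha})/K$ is unramified if and only if $m=2e$.

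For the ``if'' direction I would first divide $\alpha$ by $u^2$ to assume $\alpha\equiv1\pmod4$, which does not change $K(\sqrt\alpha)$. Writing $\alpha=1+4\gamma$ and substituting $\sqrt\alpha=1+2y$ turns the defining equation into the Artin--Schreier-type relation $y^2+y=\gamma$. Its reduction $y^2+y-\bar\gamma\in\F_q[y]$ has derivative $1$, hence is separable; since $\alpha$ is not a square, the polynomial $y^2+y-\gamma$ is irreducible over $K$, and Hensel's lemma forbids its reduction from splitting (a split reduction would lift to a factorization). Therefore the reduction is irreducible, the residue extension is quadratic, and $K(\sqrt\alpha)=K(y)/K$ is unramified.

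For the ``only if'' direction I would argue by contraposition, assuming $m<2e$ and proving ramification. The key intermediate claim is a parity statement: \emph{if $m<2e$ then $m$ is odd}. This follows because an even approximation exponent can always be improved: if $\alpha\equiv u^2\pmod{\mathfrak m^{2j}}$ with $2j<2e$, then writing the discrepancy as $\pi^{2j}w$ with $\bar w=\bar s^{\,2}$ in the perfect field $\F_q$ and completing the square with $1+\pi^j s$, the error terms have valuations $\ge 2j+1$ (using $v_K(2\pi^j s)=e+j>2j$, valid since $j<e$), contradicting maximality of $m$. Granting this, I normalize $\alpha=1+\eta$ with $v_K(\eta)=m$ odd, set $\theta=\sqrt\alpha-1$, and use $\theta^2+2\theta=\eta$. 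If $K(\sqrt\alpha)/K$ were unramified, then $v_K$ extends to $L$ with integer values; comparing $v(\theta^2)=2t$, $v(2\theta)=e+t$ and $v(\eta)=m$ for $t=v(\theta)$, each of the cases $2t<e+t$, $2t=e+t$, $2t>e+t$ either forces an even value to equal the odd number $m$ or yields $m\ge2e$, all contradictions. Hence $L/K$ is ramified.

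The main obstacle is precisely the wild behaviour of squaring at $p=2$: in contrast with the tame case, $(1+\mathfrak m)^2$ is \emph{not} contained in $1+\mathfrak m^{2e}$ once $2$ ramifies, so one cannot read off the answer from a naive count of filtration quotients. The genuine content is therefore the parity lemma together with the boundary inclusion $1+\mathfrak m^{2e+1}\subseteq(\Om_K^\times)^2$, which together pin down exactly the residue class modulo $4$ that produces the (unique) unramified quadratic extension; the Artin--Schreier substitution is merely the device that makes the ``if'' direction transparent.
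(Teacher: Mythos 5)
Your proof is correct, but it takes a genuinely different route from the paper's. The paper's proof is a discriminant--index computation: the order $\Om_K[\sqrt{\alpha}]$ has discriminant $4\alpha$ over $\Om_K$, so (since $\alpha$ is a unit) $L=K(\sqrt{\alpha})$ is unramified precisely when the index $[\Om_L:\Om_K[\sqrt{\alpha}]]$ is $2$, i.e.\ when some element $\frac{u+v\sqrt{\alpha}}{2}$ with $u,v\in\Om_K^*$ is integral; its minimal polynomial $x^2-ux+\frac{u^2-\alpha v^2}{4}$ shows this happens exactly when $u^2\equiv\alpha v^2\pmod 4$, and dividing by $v^2$ gives the lemma. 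You instead work with the unit filtration and never mention discriminants: you cap the square-approximation exponent $m$ at $2e$ via surjectivity of squaring $1+\mathfrak{m}^n\to 1+\mathfrak{m}^{n+e}$ for $n>e$, prove the ``if'' direction by the Artin--Schreier substitution $\sqrt{\alpha}=1+2y$, $y^2+y=\gamma$, together with Hensel's lemma, and prove the ``only if'' direction by your parity claim ($m<2e$ forces $m$ odd) followed by the three-way valuation comparison of $\theta^2$, $2\theta$ and $\eta$; all these steps check out. What each approach buys: the paper's argument is essentially three lines, but it leans on the formula $\operatorname{disc}(\Om_K[\sqrt{\alpha}])=[\Om_L:\Om_K[\sqrt{\alpha}]]^2\operatorname{disc}(\Om_L)$ and the criterion that unramified means unit discriminant, whereas yours is longer but more elementary and self-contained (only valuations, completeness and Hensel), and it extracts finer structure --- the exact filtration level at which $\alpha$ ceases to be approximable by squares, its parity, and an explicit generator of the residue field extension --- in effect reproving the standard description of $\Om_K^\times/(\Om_K^\times)^2$ in the quadratic case.
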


\begin{proof}
  Let $L=K(\sqrt{\alpha})$ be the quadratic extension. The ring
  $\Om_K[\sqrt{\alpha}] \subset \Om_L$ has discriminant $4\alpha$ over
  $\Om_K$. Then the extension $\Om_L$ is unramified if and only if
  $[\Om_L:\Om_K[\sqrt{\alpha}]]=2$, if and only if
  $\frac{u+v\sqrt{\alpha}}{2} \in \Om_L$ for some $u,v \in \Om_K^*$.
  The minimal polynomial of any such element is
  $x^2 - ux + \frac{u^2-\alpha v^2}{4}$, hence the index is $2$ if and only if there
  exist units $u,v$ in $\Om_K$ such that $u^2 \equiv \alpha v^2 \pmod
  4$. Multiplying by the inverse of $v$ we get the result.
\end{proof}

\begin{proof} [of Theorem~\ref{theorem:CMnorootsofunity}]
  Since all curves with complex multiplication over $K$ are isogenous
  to a quadratic twist of $E$, we are led to determine which quadratic
  twists have bad reduction at exactly one odd prime. Any global
  character corresponds to a quadratic extension
  $K(\sqrt{\alpha})$. If the twist has good reduction at $\id{p}$,
  then $\id{p} \mid \alpha$ (and the twist attains good reduction at
  $\id{p}$), and the curve will have bad reduction at all other primes
  dividing $\alpha$. Thus $(\alpha) = \id{p} \id{q}$, for $\id{q}$ an
  odd prime.  Finally, we need to check whether the character is
  unramified at $2$, which follows from
  Lemma~\ref{lemma:ramificationat2}, and a description in each case of
  the squares modulo~$4$.

  Explicitly, for odd~$d$ we require $\alpha=\pi\sqrt{-d}$, where
  $\id{q}=(\pi)$, such that $\alpha\equiv u^2\pmod4$.  For $d=7$ the
  only odd square modulo~$4$ is~$1$.  For $d\ge11$, since $2$ is inert
  in $K$ the odd squares modulo~$4$ are the squares of the odd
  residues modulo~$2$, which are $1,w,w^2$.  For $d=2$, the twist of
  the base curve by $(1+\sqrt{-2})\sqrt{-2}$ has odd conductor
  $(1+\sqrt{-2})^2$, so we must twist by $\pi\sqrt{-2}$ where
  $\pi\equiv(1+\sqrt{-2})u^2\pmod{4}$; since the odd squares
  modulo~$4$ are $1$ and~$-1+2\sqrt{-2}$ we obtain the condition
  stated.
\end{proof}

\begin{remark}
In each case in Theorem~\ref{theorem:CMnorootsofunity}, the condition
on~$\pi$ is satisfied by one quarter of the odd residue classes
modulo~$4$.  Since $\id{q}=(\pi)$ has two generators~$\pm\pi$, our
construction gives curves of conductor~$\id{q}^2$ for half the odd
primes of~$K$.
\end{remark}

\subsection{$K=\Q(\sqrt{-1})$} The elliptic curve with complex
multiplication by $\Z[\sqrt{-1}]$ is 
\[
E:y^2=x^3+x  \qquad \text{ with label \lmfdbec{64}{a}{4}}.
\]
Its conductor over $K$ equals $\id{p}^8$, where
$\id{p}=(1+\sqrt{-1})$. In particular, $\chi_{\id{p}}$ has conductor
$\id{p}^4$ and order $4$. Note that since the automorphism group
of~$E$ is cyclic of order~$4$ we must consider \emph{quartic
  twists}. For $\alpha \in K^*$, the quartic twist of $E$ by $\alpha$ equals
\begin{equation}
  \label{eq:quartictwist}
  E_{\alpha}:y^2=x^3+\alpha x.
\end{equation}
Note that this operation does not coincide with the twist of the
L-series by a quartic character (as such L-series do not satisfy a
functional equation). Indeed, if $E$ corresponds to the automorphic
form $\chi \oplus \bar{\chi}$ (where $\chi$ is a Grossencharacter),
then $E_{\alpha}$ corresponds to the automorphic form
$\chi \psi \oplus \bar{\chi} \bar{\psi}$, where
$\psi = \kro{}{\alpha}_4$ (the quartic Legendre symbol). It is still
true that the curve $E_{\alpha}$ is isomorphic to $E$ over the
extension $K(\sqrt[4]{\alpha})$.

\begin{remark}
 All the quartic twists of $E$ have a non-trivial $K$-rational $2$-torsion point.
\end{remark}

We first need a local result about when a pure quartic extension is
unramified above~$2$.

\begin{lemma}
  Let $K=\Q_2(\sqrt{-1})$, and let $\alpha \in \Om_K$ be a
  unit. Then the extension $K(\sqrt[4]{\alpha})$ is unramified over
  $K$ if and only if $\alpha \equiv 1, 1+4\sqrt{-1} \pmod8$.
  \label{lemma:quartictwist}
\end{lemma}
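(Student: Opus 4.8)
The plan is to work entirely in $K=\Q_2(\sqrt{-1})$, write $i=\sqrt{-1}$, and fix the uniformizer $\pi=1+i$ of $\Om_K$, so that $\pi^2=2i$, $\pi^4=-4$, $\pi^6=-8i$ (hence $(\pi^6)=(8)$) and the residue field is $\F_2$. I would study the set
$H=\{\alpha\in\Om_K^*:\ K(\sqrt[4]{\alpha})/K\text{ is unramified}\}$,
which is a subgroup of $\Om_K^*$ because $\sqrt[4]{\alpha\beta}=\sqrt[4]{\alpha}\,\sqrt[4]{\beta}$ and a compositum of unramified extensions is unramified. The whole point is to show that $H$ contains $1+8\Om_K$ and that its image modulo~$8$ is exactly the order-two group $\{1,\,1+4i\}$, which is precisely the assertion $\alpha\equiv 1,\,1+4i\pmod 8$.

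First I would bound the size of $H$ by combining the uniqueness of unramified extensions with Kummer theory, which is available since $\mu_4=\langle i\rangle\subset K$. As $[K(\sqrt[4]{\alpha}):K]$ divides $4$ and the unramified extensions of $K$ of degrees $1,2,4$ form a nested chain $K\subset K_2\subset K_4$, an extension $K(\sqrt[4]{\alpha})$ is unramified if and only if it is contained in the unramified quartic field $K_4$. By Kummer duality this containment is controlled by the cyclic subgroup $\Delta_4\le K^*/(K^*)^4$ dual to $\Gal(K_4/K)\cong\Z/4$, so $|\Delta_4|=4$; intersecting with the unit part and using $\Om_K^*\cap(K^*)^4=(\Om_K^*)^4$ gives $[H:(\Om_K^*)^4]\le 4$.

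Next I would reduce to a statement modulo~$8$. Expanding $(1+\pi v)^4$ and reducing modulo $\pi^6=(8)$ (using $4=-\pi^4$ and $2=-i\pi^2$) shows $(\Om_K^*)^4\subseteq 1+8\Om_K$. The standard local computations $|\Om_K^*/(\Om_K^*)^4|=64$ and $|(\Om_K/8)^*|=32$ then give $[1+8\Om_K:(\Om_K^*)^4]=2$. A representative of the nontrivial class is $9$: it is not a fourth power (since $\sqrt 3\notin K$), while $K(\sqrt[4]{9})=K(\sqrt 3)$ is unramified by Lemma~\ref{lemma:ramificationat2} because $3\equiv -1\equiv i^2\pmod 4$. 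Hence $1+8\Om_K=(\Om_K^*)^4\cup 9(\Om_K^*)^4\subseteq H$, so $H$ is a union of cosets of $1+8\Om_K$ and is determined by its reduction modulo~$8$, with $[H:1+8\Om_K]=[H:(\Om_K^*)^4]/2\le 2$.

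Finally I would exhibit the nontrivial coset by verifying directly that $1+4i\in H$, i.e. that $K(\sqrt[4]{1+4i})/K$ is unramified; this is the main obstacle. Since $1+4i\equiv 1\pmod 4$, Lemma~\ref{lemma:ramificationat2} shows $K(\sqrt{1+4i})/K$ is unramified, and as $1+4i$ is not a square (its class modulo~$8$ is not among the squares $\{1,-1,3+4i,-3+4i\}$ of units modulo~$8$) this lower field is the unramified quadratic $K_2=K(\sqrt{1+4i})$. It then remains to apply Lemma~\ref{lemma:ramificationat2} once more, now over $K_2$, to $\beta=\sqrt{1+4i}$, checking that $K_2(\sqrt{\beta})/K_2$ is unramified; concretely this is an explicit $2$-adic computation of $\sqrt{1+4i}$ modulo~$4$ in $\Om_{K_2}$ (whose residue field is $\F_4$), and this is the step I expect to require real work. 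Granting it, $1+4i\in H$, and since $1+4i\not\equiv 1\pmod 8$ while $(1+4i)^2=-15+8i\equiv 1\pmod 8$, the image of $H$ modulo~$8$ is exactly $\{1,1+4i\}$. Combined with $1+8\Om_K\subseteq H$ this yields $H=\{\alpha\in\Om_K^*:\alpha\equiv 1\text{ or }1+4i\pmod 8\}$, as claimed.
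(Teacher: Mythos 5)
Your group-theoretic frame is sound, and it is organized genuinely differently from the paper's proof (which uses Hensel's lemma to identify $(\Om_K^*)^4$ with $1+\pi^7\Om_K$, so that the extension is determined by $\alpha$ modulo $\pi^7$, and then quotes the identification $K(\zeta_5)=K(\sqrt[4]{1+4\sqrt{-1}})$ of the unramified quartic field). The individual steps you do carry out all check: $H$ is a subgroup; the Kummer bound $[H:(\Om_K^*)^4]\le 4$ is correct; $(\Om_K^*)^4\subseteq 1+8\Om_K$ is true (though your one-line expansion conceals a short case analysis on $v\bmod\pi$, since the middle terms of $(1+\pi v)^4$ have valuation only $4$ individually); the indices $64$, $32$, hence $[1+8\Om_K:(\Om_K^*)^4]=2$, are right; and $9$ is indeed a non-fourth-power in $H$, so $1+8\Om_K\subseteq H$ and $H$ is determined by its image modulo $8$, of order at most $2$ there.

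However, there is a genuine gap, and it sits exactly at the step carrying the content of the lemma: you never prove that $1+4i\in H$, i.e.\ that $K(\sqrt[4]{1+4i})/K$ is unramified; you write ``Granting it''. This is not a routine verification omitted for brevity. Nothing established earlier in your argument distinguishes $1+4i$ from the other residue classes $c$ with $c^2\equiv 1\pmod 8$ (namely $-1$, $3$, $5$, $3+4i$, $5+4i$, $-1+4i$ modulo $8$), any one of which could, as far as your argument shows, be the nontrivial class of $H$ --- for instance $-1$ is excluded only because $K(\sqrt[4]{-1})=K(\zeta_8)$ is ramified, which itself requires an argument. So as written the proposal shows only that $H$ is one of several possible subgroups, not the statement of the lemma. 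The gap can be filled: either by the explicit $2$-adic computation over $K_2$ that you outline, or more cleanly by the quartic Gauss sum for the character $\chi$ of conductor $5$ with $\chi(2)=i$, which gives $g(\chi)^4=5\,J(\chi,\chi)^2=-15+20i\equiv 1+4i\pmod 8$ while $K(g(\chi))=K(\zeta_5)$ is the unramified quartic extension; your own results ($H$ a group containing $1+8\Om_K$) then transfer this to $1+4i\in H$. But some such computation must actually be carried out, and your proposal defers precisely the fact on which everything hinges --- the same fact the paper invokes in the form ``$K(\zeta_5)=K(\sqrt[4]{1+4\sqrt{-1}})$, as can be easily checked''.
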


\begin{proof} The extension $K(\sqrt[4]{\alpha})$ depends on $\alpha$
  up to $4$-th powers, i.e. two elements give the same extension if
  and only if they differ by a $4$-th power. By Hensel's Lemma, an odd
  element of $\Q_2(\sqrt{-1})$ is a fourth power if and only if it is
  congruent to $1$ modulo $(1+\sqrt{-1})^7$, hence the extension is
  characterized by $\alpha$ modulo $(1+\sqrt{-1})^7$. Also,
  $K(\sqrt[4]{\alpha})$ is unramified if and only if it is contained
  in the unique unramified extension of $K$ of degree~$4$, which is
  $K(\zeta_5) = K(\sqrt[4]{1+4\sqrt{-1}})$, as can be easily
  checked. Thus, for $K(\sqrt[4]{\alpha})$ to be unramified, $\alpha$
  must be congruent to a power of $1+4\sqrt{-1}$, i.e.  $\alpha \equiv
  1, 1+4\sqrt{-1}, 9,\text{ or } 9+4\sqrt{-1} \pmod{(1+\sqrt{-1})^7}$;
  this simplifies to $\alpha\equiv 1, 1+4\sqrt{-1}\pmod{8}$ as stated.
\end{proof}

\begin{thm}
  Let $K=\Q(\sqrt{-1})$ and let $E/K$ be the elliptic curve \lmfdbec{64}{a}{4}. Then
  all elliptic curves with complex multiplication over $K$ of odd
  prime power conductor are isogenous to the quartic twist of $E$ by
  $\pi$, where $\pi$ is a prime power such that
  $\pi \equiv -1 \pm 2\sqrt{-1} \pmod 8$.
\label{theorem:CMGaussian}
\end{thm}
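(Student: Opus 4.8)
The plan is to follow the strategy of Theorem~\ref{theorem:CMnorootsofunity}, with the quadratic twists there replaced by the quartic twists $E_\alpha$ of~\eqref{eq:quartictwist}. Every elliptic curve over~$K$ with complex multiplication is isogenous to some~$E_\alpha$, so it suffices to determine which $E_\alpha$ have good reduction at the ramified prime $\id{p}=(1+\sqrt{-1})$ together with bad reduction at a single odd prime. The odd primes are handled at once: since $E$ has good reduction away from~$\id{p}$, and the quartic twist by~$\alpha$ depends only on~$\alpha$ modulo fourth powers, at an odd prime~$\id{q}$ the curve $E_\alpha$ has good reduction precisely when $v_{\id{q}}(\alpha)\equiv0\pmod4$, and otherwise has additive reduction with conductor exponent~$2$ (the ramification being tame, as $\id{q}$ is odd). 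Up to fourth powers and units we may therefore assume that $\alpha=\pi$ is supported at a single odd prime, that is, $\pi$ is a prime power; the essential remaining point is to pin down the good reduction condition at~$\id{p}$.

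For~$\id{p}$ I would first exhibit one good reduction twist. The element $\alpha_0=-1+2\sqrt{-1}$ is a Gaussian prime of norm~$5$, and a direct application of Tate's algorithm over $\Q_2(\sqrt{-1})$ (or a machine check with~\cite{PARI2}) shows that
\[
  E_{-1+2\sqrt{-1}}:\quad y^2=x^3+(-1+2\sqrt{-1})\,x
\]
has good reduction at~$\id{p}$; since $v_{\id{q}}(\alpha_0)=1$ at the prime $\id{q}\mid5$, this curve has conductor exactly~$\id{q}^2$. This single verification, which must contend with the wild ramification of~$2$ in~$K$, is the main obstacle of the proof.

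It then remains to describe all good reduction twists relative to~$\alpha_0$. For a $\id{p}$-unit~$\alpha$, twisting $E_{\alpha_0}$ by the quartic character $\kro{}{\alpha/\alpha_0}_4$ preserves good reduction at~$\id{p}$ if and only if this character is unramified there, i.e.\ if and only if the pure quartic extension $K(\sqrt[4]{\alpha/\alpha_0})/K$ is unramified at~$\id{p}$; by Lemma~\ref{lemma:quartictwist} this holds precisely when $\alpha/\alpha_0\equiv1,\,1+4\sqrt{-1}\pmod8$. Since $\alpha_0(1+4\sqrt{-1})\equiv-1-2\sqrt{-1}\pmod8$, the twists with good reduction at~$\id{p}$ are exactly those with $\alpha\equiv-1\pm2\sqrt{-1}\pmod8$ (note that this mod~$8$ condition indeed captures all four unramified classes modulo fourth powers, as they collapse in pairs modulo~$8$). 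Combining with the first paragraph, $E_\alpha$ has odd prime power conductor exactly when it is isogenous to $E_\pi$ for a prime power~$\pi$ with $\pi\equiv-1\pm2\sqrt{-1}\pmod8$, which is the assertion of the theorem.
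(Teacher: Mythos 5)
Your proposal is correct and follows essentially the same route as the paper: exhibit $E_{-1+2\sqrt{-1}}$ as a quartic twist with good reduction at $(1+\sqrt{-1})$, then use Lemma~\ref{lemma:quartictwist} to identify the quartic characters unramified above~$2$ with the classes $1$, $1+4\sqrt{-1}\pmod 8$, and translate by $-1+2\sqrt{-1}$ to obtain $-1\pm2\sqrt{-1}\pmod 8$. Your treatment is slightly more explicit than the paper's (spelling out the behaviour at odd primes and the only-if direction of the good-reduction criterion), but the decomposition and the key lemma are identical.
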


\begin{proof}[of Theorem~\ref{theorem:CMGaussian}] Let
  $\pi = -1+2\sqrt{-1}$. One may check that the quartic twist of~$E$
  by $\pi$ has good reduction at $2$ and bad additive reduction at
  $(-1+2\sqrt{-1})$. Then any quartic twist of $E$ of odd conductor is
  a twist of $E_{-1+2\sqrt{-1}}$ by a quartic character of odd
  conductor, which by Lemma~\ref{lemma:quartictwist} correspond to
  elements which are congruent to $1$ or~$1+4\sqrt{-1}\pmod8$.
  Multiplying by $-1+2\sqrt{-1}$ gives the classes~$-1\pm2\sqrt{-1}\pmod8$
  as stated.
\end{proof}

\begin{remark}
  Of all odd primes~$\id{q}$ of $K$, one quarter have a
  generator~$\pi$ satisfying the condition in
  Theorem~\ref{theorem:CMGaussian}. Hence our construction gives
  elliptic curves of conductor~$\id{q}^2$ for one quarter of all
  primes~$\id{q}$ of~$K$.

\end{remark}

\subsection{$K=\Q(\sqrt{-3})$}
The elliptic curve with complex multiplication by
$\Z[\frac{1+\sqrt{-3}}{2}]$ is the curve \lmfdbec{27}{a}{4}, with (non-minimal)
equation $$E:y^2=x^3+16.$$ Its conductor over $K$ is $\id{p}^4$, where
$\id{p}=(\sqrt{-3})$. In particular, $\chi_{\id{p}}$ has conductor
$\id{p}^2=(3)$ and order $6$ (note that $(\Om_K/3)^\times \simeq
\Z/6\Z$).  Since $E$ has automorphism group of order~$6$, we must
consider sextic twists, where the sextic twist by $\alpha \in K^*$ of
the previous model is
\begin{equation}
  \label{eq:sextictwist}
  E_{\alpha}:y^2=x^3+16\alpha.
\end{equation}
Similar considerations as for the quartic twists apply. In particular,
the curve $E_{\alpha}$ is isomorphic to $E$ over the extension
$K(\sqrt[6]{\alpha})$;
such twists are needed to cancel the CM character $\chi_{\id{p}}$.

As before, we need local results, now at both~$2$ and~$3$:

\begin{lemma}
  Let $K=\Q(\sqrt{-3})$, let $w=(1+\sqrt{-3})/2\in K$ be a $6$th root of
  unity, and let $\alpha \in \Om_K$ be a $2$-adic and $3$-adic
  unit. Then the extension $K(\sqrt[6]{\alpha})/K$ is unramified over
  both $2$ and~$3$ if and only if
  \begin{enumerate}
  \item $\alpha \equiv 1, w^2$, or $w^4\pmod 4$ (equivalently, $\alpha$ is
    congruent to a square modulo~$4$); and
  \item $\alpha \equiv \pm 1 \pmod {\sqrt{-3}^3}$ (equivalently,
    $\alpha$ is congruent to a cube modulo~$\sqrt{-3}^3$).
  \end{enumerate}
  \label{lemma:sextictwist}
\end{lemma}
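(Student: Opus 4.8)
The plan is to separate the two primes and, at each, reduce the ramification question to a single radical subextension. Since $\gcd(2,3)=1$, the field $K(\sqrt[6]{\alpha})$ is the compositum of the quadratic extension $K(\sqrt{\alpha})$ and the cubic extension $K(\sqrt[3]{\alpha})$. First I would record two ``automatic'' unramifiedness facts. Because $\alpha$ is a $2$-adic unit and $3$ is prime to the residue characteristic, the polynomial $x^3-\alpha$ has separable reduction at every prime above $2$, so $K(\sqrt[3]{\alpha})$ is unramified over $2$; symmetrically, as $\alpha$ is a $3$-adic unit and $2$ is prime to $3$, the extension $K(\sqrt{\alpha})$ is unramified over $3$. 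Since a compositum of unramified local extensions is unramified, and a subextension of an unramified extension is unramified, this shows that $K(\sqrt[6]{\alpha})$ is unramified over $2$ if and only if $K(\sqrt{\alpha})$ is, and unramified over $3$ if and only if $K(\sqrt[3]{\alpha})$ is. Thus conditions~(1) and~(2) are proved independently.

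For the prime $2$, note that $2$ is inert in $K$, so the completion is the unramified quadratic field $\Q_2(\sqrt{-3})$ with residue field $\F_4$. Applying Lemma~\ref{lemma:ramificationat2}, $K(\sqrt{\alpha})$ is unramified at $2$ exactly when $\alpha\equiv u^2\pmod 4$ for some unit $u$ (the case where $\alpha$ is itself a square being trivially consistent). I would then compute $(\Om/4)^\times$, which has order $12$ and decomposes as $\mu_3\times U$ with $U=1+2\Om/4\Om$ elementary abelian of order $4$; since squaring is an automorphism of $\mu_3$ and annihilates $U$, the unit squares modulo $4$ are exactly $\{1,w^2,w^4\}$. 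This is precisely condition~(1), and these are the classes congruent to a square modulo~$4$.

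The main work, and the step I expect to be the real obstacle, is the wildly ramified prime $3$, where the degree $3$ of the relevant radical equals the residue characteristic. Here I complete at $\pi=\sqrt{-3}$, so that $K_\pi=\Q_3(\zeta_3)$ with $v(\pi)=1$ and $3=-\pi^2$; write $U^{(i)}=1+\pi^i\Om$. Two computations are needed. First, to justify that ``congruent to a cube'' means ``$\equiv\pm1$'', I would expand $(1+\pi x)^3$ and use that $x^3\equiv x\pmod\pi$ on the residue field $\F_3$ to get $(U^{(1)})^3\subseteq U^{(4)}$; together with the fact that cubing gives an isomorphism $U^{(i)}\xrightarrow{\sim}U^{(i+2)}$ for $i\ge 2$, this yields $(U^{(1)})^3=U^{(4)}$, so the cubes in $(\Om/\pi^3)^\times$ are just $\{\pm1\}$. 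Second, and crucially, I claim $K_\pi(\sqrt[3]{\alpha})$ is unramified precisely when $\alpha\equiv\pm1\pmod{\pi^3}$. As $-1$ is a cube, I may reduce to $\alpha=1+\pi^3u$ with $u$ a unit; letting $\theta$ be a root of $x^3-\alpha$ (necessarily $\theta\equiv1\pmod\pi$) and setting $\eta=(\theta-1)/\pi$, the relation $\theta^3-1=\pi^3u$ collapses, using $3=-\pi^2$, to $\eta^3-\eta-\pi\eta^2=u$, so $\eta$ is a root of the monic integral polynomial $g(Y)=Y^3-\pi Y^2-Y-u$. The key point is that $\bar g(Y)=Y^3-Y-\bar u$ is an Artin--Schreier polynomial over $\F_3$ with $\bar u\neq0$, hence irreducible and cutting out $\F_{27}$; thus the residue degree is $3$ and the extension is unramified. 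For the converse I would invoke uniqueness of the unramified cubic extension of $K_\pi$: the above exhibits every class in $U^{(3)}\setminus U^{(4)}$ as generating it, so within $\Om^\times/(\Om^\times)^3\cong U^{(1)}/U^{(4)}\cong(\Z/3)^3$ the image of $U^{(3)}$ is the unique unramified line, and any non-cube $\alpha\not\equiv\pm1\pmod{\pi^3}$ gives a ramified cubic extension. Combining the $2$-local and $3$-local statements yields the lemma.
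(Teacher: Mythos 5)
Your proof is correct, and its skeleton agrees with the paper's: both write $K(\sqrt[6]{\alpha})=K(\sqrt{\alpha},\sqrt[3]{\alpha})$, observe that each radical is automatically (tamely) unramified at the prime coprime to its degree, and settle the prime $2$ via Lemma~\ref{lemma:ramificationat2} together with the identification of the unit squares modulo $4$ as $\{1,w^2,w^4\}$. The genuine divergence is at the wild prime $3$, the step you yourself flagged as the obstacle. The paper handles it by (i) invoking Hensel's lemma to show that a unit of $\Q_3(\sqrt{-3})$ is a cube if and only if it is $\equiv\pm1\pmod 9$, so that the extension depends only on $\alpha$ modulo $9$; (ii) a direct check, left to the reader, that the one explicit element $\alpha_1=2+3w$ generates an unramified cubic extension; and (iii) uniqueness of the unramified cubic extension (Kummer theory) to conclude that the unramified classes are exactly $\pm1,\pm\alpha_1,\pm\alpha_1^2\pmod 9$, i.e.\ $\pm1\pmod{\sqrt{-3}^3}$. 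Your replacement for (i) -- the unit-filtration computation that the cubes among units $\equiv1\pmod{\pi}$ are exactly the units $\equiv1\pmod{\pi^4}$ -- is the same content, since $(9)=(\pi)^4$; but in place of the verification (ii) you give a uniform, element-free argument: for any $\alpha=1+\pi^3u$ with $u$ a unit, the substitution $\theta=1+\pi\eta$ turns $x^3-\alpha$ into the monic integral polynomial $Y^3-\pi Y^2-Y-u$, whose reduction $Y^3-Y-\bar{u}$ is an irreducible Artin--Schreier polynomial over $\F_3$, forcing residue degree $3$ and hence unramifiedness; the converse then follows, as in the paper, from uniqueness of the unramified cubic extension. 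What your route buys is a self-contained and conceptual proof: nothing needs to be verified on a particular element, and the Artin--Schreier structure explains \emph{why} the threshold sits at $\pi^3$. What the paper's route buys is brevity, at the price of burying the key fact in a ``we may check''.
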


\begin{proof} Since $K(\sqrt[6]{\alpha}) =
  K(\sqrt{\alpha},\sqrt[3]{\alpha})$ we require both
  $K(\sqrt{\alpha})/K$ and $K(\sqrt[3]{\alpha})/K$ to be unramified.
  The first is certainly unramified over~$3$, and over~$2$ we may
  apply Lemma~\ref{lemma:ramificationat2} to obtain the first
  condition stated.

  Similarly, $K(\sqrt[3]{\alpha})/K$ is always unramified over~$2$, so
  we need the condition for it to be unramified also over~$3$.  By
  Hensel's Lemma, a unit of $\Q_3(\sqrt{-3})$ is a cube if and only if
  it is congruent to $\pm1$ modulo~$9$, hence the extension is
  characterized by $\alpha$ modulo~$9$.  We may check that
  $K(\sqrt[3]{\alpha_1})$ is unramified, for $\alpha_1=2+3w$.  Hence
  $K(\sqrt[3]{\alpha})/K$ is unramified at~$3$ if and only if
  $\alpha\equiv\pm1, \pm\alpha_1, \pm\alpha_1^2\pmod9$, which is if
  and only if $\alpha\equiv\pm1\pmod{\sqrt{-3}^3}$.
\end{proof}

\begin{thm}
  Let $K=\Q(\sqrt{-3})$ let $E/K$ be the elliptic curve \lmfdbec{27}{a}{4}. Then
  all elliptic curves with complex multiplication over $K$ of odd
  prime power conductor are isogenous to $E$ or to the sextic twist
  $E_{\alpha}$ of $E$ by $\alpha=\sqrt{-3}^3\, \pi$, where $\pi$ is a
  prime power such that:
  \begin{enumerate}
  \item $\pi \equiv \sqrt{-3}, \sqrt{-3}w^2$, or $\sqrt{-3}w^4\pmod 4$, and
  \item $\pi \equiv \pm 4 \pmod {\sqrt{-3}^3}$,
  \end{enumerate}
   where $w=(1+\sqrt{-3})/2$ is a 6th root of unity.
\label{theorem:CMEisenstein}
\end{thm}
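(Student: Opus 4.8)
The plan is to follow the same strategy as in the proofs of Theorems~\ref{theorem:CMnorootsofunity} and~\ref{theorem:CMGaussian}. Since every elliptic curve with CM by an order in $K=\Q(\sqrt{-3})$ is isogenous to a sextic twist $E_\alpha:y^2=x^3+16\alpha$ of the base curve $E=$\lmfdbec{27}{a}{4}, it suffices to determine which $\alpha$ give a curve of odd prime power conductor. First I would record that $E$ itself has conductor $\id{p}^4$ with $\id{p}=(\sqrt{-3})$ odd, which accounts for the curve $E$ appearing in the statement (the case where the conductor is a power of $\id{p}$); every other such curve must have good reduction at $\id{p}$ and bad reduction at exactly one odd prime $\id{q}=(\pi)$, giving conductor $\id{q}^2$.

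Next I would pin down the valuation of $\alpha$ at $\id{p}$. The discriminant of $E_\alpha$ is $\Disc(E_\alpha)=-432\cdot256\,\alpha^2$, so $v_{\id{p}}(\Disc(E_\alpha))=6+2v_{\id{p}}(\alpha)$ (using $v_{\id{p}}(3)=2$); since $\Om_K^*$ is annihilated by~$12$, good reduction at $\id{p}$ requires $12\mid v_{\id{p}}(\Disc(E_\alpha))$, forcing $v_{\id{p}}(\alpha)\equiv3\pmod6$. Taking the minimal value $v_{\id{p}}(\alpha)=3$ gives $\alpha=\sqrt{-3}^3\pi$ with $\pi$ coprime to $\sqrt{-3}$, as in the statement; conceptually this is the sextic twist whose local character cancels the order-$6$ CM character $\chi_{\id{p}}$ of conductor $\id{p}^2=(3)$.

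It then remains to translate the two local good-reduction requirements into congruences on $\pi$. For good reduction at $2$, I would argue that since the base curve $E$ (conductor a power of $\id{p}$) already has good reduction at $2$, the twist $E_\alpha$ has good reduction at $2$ exactly when $K(\sqrt[6]{\alpha})/K$ is unramified there; as the cubic part $K(\sqrt[3]{\alpha})/K$ is automatically unramified at $2$, this reduces to $K(\sqrt{\alpha})/K$, which by Lemma~\ref{lemma:ramificationat2} is unramified at $2$ iff $\alpha$ is a square modulo~$4$. Using $\sqrt{-3}^3=-3\sqrt{-3}\equiv\sqrt{-3}\pmod4$ and $\sqrt{-3}^{-1}\equiv\sqrt{-3}\pmod4$ (as $(\sqrt{-3})^2=-3\equiv1$), this is equivalent to $\pi\equiv\sqrt{-3}\cdot(\text{square})\pmod4$, which is condition~(1). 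For good reduction at $3$, I would fix one base twist $E_{\alpha_0}$ with $\alpha_0=\sqrt{-3}^3\pi_0$ verified to have good reduction at $\id{p}$, and describe every other admissible twist as $E_{\alpha_0}$ twisted by a sextic character unramified at $\id{p}$; by Lemma~\ref{lemma:sextictwist}(2), applied to the $\id{p}$-adic unit $\alpha/\alpha_0=\pi/\pi_0$, this is the condition $\pi/\pi_0\equiv\pm1\pmod{\sqrt{-3}^3}$, so $\pi\equiv\pm\pi_0\pmod{\sqrt{-3}^3}$, and computing $\pi_0\equiv\pm4\pmod{\sqrt{-3}^3}$ for the chosen base twist yields condition~(2). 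Finally $(\alpha)=\id{p}^3\id{q}$ leaves $\id{q}=(\pi)$ as the only bad prime, with conductor $\id{q}^2$.

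The main obstacle is the analysis at $\id{p}=(\sqrt{-3})$, where the residue characteristic~$3$ divides the order~$6$ of the CM character, so the relevant sextic (in particular cubic) twisting is wildly ramified and the naive discriminant count does not by itself give good reduction. Establishing precisely which residues work — that is, identifying the base twist $\alpha_0$ matching the field cut out by $\chi_{\id{p}}$ and computing $\pi_0\pmod{\sqrt{-3}^3}$ — is the delicate step. Once the explicit unramified cubic extension $K(\sqrt[3]{2+3w})$ of Lemma~\ref{lemma:sextictwist} is in hand, condition~(2) follows by the same bookkeeping that let the unramified quartic extension $K(\zeta_5)$ drive the $d=1$ case in Theorem~\ref{theorem:CMGaussian}.
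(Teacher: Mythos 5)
Your proposal is correct and follows essentially the same route as the paper: the paper likewise derives condition (1) from the $2$-adic part of Lemma~\ref{lemma:sextictwist}, and obtains condition (2) by fixing the explicit base twist $\alpha_1=\sqrt{-3}^3\cdot 4$ (your $\pi_0=4$), whose good reduction at $\sqrt{-3}$ is asserted as a direct verification, so that good reduction at $\sqrt{-3}$ reduces to $\alpha/\alpha_1=\pi/4\equiv\pm1\pmod{\sqrt{-3}^3}$. Your preliminary valuation count forcing $v_{\id{p}}(\alpha)=3$ (hence $\alpha=\sqrt{-3}^3\,\pi$) is a correct detail of the necessity direction that the paper leaves implicit in the statement of the theorem.
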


\begin{proof}
  $E$ itself has good reduction except at $\sqrt{-3}$; by
  Lemma~\ref{lemma:sextictwist}, the sextic twist $E_{\alpha}$ will
  also have good reduction at~$2$ provided that $\alpha$ is an odd
  square modulo~$4$, equivalently $\alpha\equiv1,w^2,w^4\pmod4$.
  Hence the first condition on~$\pi$ ensures that $E_{\alpha}$ has
  good reduction except at $\pi$ and (possibly) at~$\sqrt{-3}$.

  The twist $E_{\alpha_1}$ with $\alpha_1=\sqrt{-3}^3\cdot 4$ has good
  reduction at $\sqrt{-3}$.  Hence by Lemma~\ref{lemma:sextictwist},
  $E_{\alpha}$ has good reduction at~$\sqrt{-3}$ if $\alpha/\alpha_1$
  is a cube modulo~$\sqrt{-3}^3$, or equivalently
  $\alpha/\alpha_1\equiv\pm1\pmod{\sqrt{-3}^3}$.  This is ensured by
  the second condition, since $\pi/4=\alpha/\alpha_1$.
\end{proof}

\begin{remark}
  The curves in the previous family never have a rational $2$-torsion
  point, since $2\alpha$ is not a cube.
\end{remark}

\begin{remark}
  Of all primes of $K$ other than~$(2)$ and $(\sqrt{-3})$, half have a
  generator~$\pi$ satisfying the $2$-adic condition in
  Theorem~\ref{theorem:CMEisenstein}, and one third have a generator
  satisfying the $3$-adic condition.  Hence our construction gives
  elliptic curves of conductor~$\id{p}^2$ for one sixth of all
  primes~$\id{p}$ of~$K$.
\end{remark}

\subsubsection{Curves with complex multiplication by $K$ over an
  imaginary quadratic field $L$.} A natural question is what happens if
we consider a curve $E$ with complex multiplication by an order in
$K$, over a possibly different imaginary quadratic field $L$: are there twists of $E$
with good reduction at primes dividing $\cond(E)$?

The proofs of the previous results are of a local nature, hence if $L$
has the same completion at a prime dividing $\cond(E)$ as $K$ we are
in exactly the same situation.

\begin{thm}
  Let $E/\Q$ be an elliptic curve of conductor $p^r$ with complex
  multiplication by an order in $K$. Let $L=\Q(\sqrt{-t})$ be an
  imaginary quadratic field different from $K$ and $\id{p}$ a prime
  ideal of $L$ dividing $p$. If the completion of $L$ at $\id{p}$ is
  isomorphic to the completion of $K$ at the prime dividing $p$, then
  there exists $\alpha \in L$ such that:
  \begin{enumerate}
  \item if $K=\Q(\sqrt{-d})$, $d \neq 1,3$, then the quadratic twist of
    $E/L$ by $\sqrt{-t}\, \alpha$ has good reduction at $\id{p}$.
    
  \item if $K=\Q(\sqrt{-1})$, let $\sqrt{-1}$ denote an element of $L$
    whose square is congruent to $-1$ modulo $8$. Then the quartic
    twist of the curve \lmfdbec{64}{a}{4} by $\alpha$ has good reduction at $2$ for
    $\alpha \equiv -1 \pm 2\sqrt{-1} \pmod 8$.
     
  \item if $K=\Q(\sqrt{-3})$, let $\sqrt{-3}$ denote an element of $L$
    whose square is congruent to $-3$ modulo $9$. Then the sextic
    twist of the curve \lmfdbec{27}{a}{4} by $\alpha$ has good reduction at $3$
    for $\alpha \equiv \pm 4\sqrt{-3}^3 \pmod {\sqrt{-3}^3}$.
  \end{enumerate}
  On the other hand, if the completions are not isomorphic, no such
  twist exists.
  \label{thm:otherfields}
\end{thm}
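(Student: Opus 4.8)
The plan is to reduce everything to the three preceding CM twist theorems (Theorems~\ref{theorem:CMnorootsofunity}, \ref{theorem:CMGaussian}, and~\ref{theorem:CMEisenstein}) by exploiting the purely local nature of their proofs. The key observation, already flagged in the subsection's preamble, is that good reduction at a prime $\id{p}$ is a condition on the completion $L_{\id{p}}$ together with the local component of the twisting character at $\id{p}$. Since the three earlier theorems are proved via the local ramification criteria of Lemmas~\ref{lemma:ramificationat2}, \ref{lemma:quartictwist}, and~\ref{lemma:sextictwist}, all of which depend only on the local field at the relevant prime above~$2$ or~$3$, the entire analysis transports verbatim once we know $L_{\id{p}}\cong K_{\id{p}_0}$, where $\id{p}_0$ is the prime of $K$ above~$p$.

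First I would fix, for each of the three cases, the distinguished twist of the base curve that achieves good reduction at the prime dividing $\cond(E)$, exactly as constructed over $K$ in the earlier theorems. In case~(1), $E$ has CM by an order in $K=\Q(\sqrt{-d})$ with $d\neq1,3$, the CM character $\chi_{\id{p}}$ is quadratic, and the twist needed to cancel it is by $\sqrt{-t}\,\alpha$ with $\alpha$ chosen so that the local character at $\id{p}$ matches; the condition that $K(\sqrt{\alpha})/K$ be unramified above~$2$ is governed by Lemma~\ref{lemma:ramificationat2}, which reads the same in $L_{\id{p}}$. In cases~(2) and~(3) the automorphism group forces quartic, respectively sextic, twists, and the congruence conditions $\alpha\equiv-1\pm2\sqrt{-1}\pmod8$ and $\alpha\equiv\pm4\sqrt{-3}^3\pmod{\sqrt{-3}^3}$ are precisely the conditions extracted from Lemmas~\ref{lemma:quartictwist} and~\ref{lemma:sextictwist} for the pure extension to be unramified; here one must take care that $\sqrt{-1}$ (resp.\ $\sqrt{-3}$) denotes an element of $L$ whose square matches the corresponding element of $K$ modulo the relevant power of the prime, which is exactly the content of the isomorphism $L_{\id{p}}\cong K_{\id{p}_0}$ and why the theorem states it that way.

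Next I would verify that when $L_{\id{p}}\cong K_{\id{p}_0}$, the images of the roots of unity and the relevant residue rings agree, so that the congruences defining the good-reduction twists over $K$ have solutions $\alpha\in L$ with the identical residues; invoking the cited lemmas then yields good reduction at $\id{p}$ for the stated twist, establishing the three positive assertions. For the converse, I would argue that if the completions $L_{\id{p}}$ and $K_{\id{p}_0}$ are \emph{not} isomorphic, then the local extension over which $E$ acquires good reduction --- which is canonically determined by the CM character and hence by $K_{\id{p}_0}$ --- cannot be realized as a twist over $L_{\id{p}}$, since the requisite unramified quadratic/quartic/sextic extension either fails to exist or fails to match the needed local character. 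Concretely, the ramified/split/inert behaviour of $p$ differs between the two completions, so the root-of-unity values that the local CM character must take cannot be matched by any local twisting character over $L_{\id{p}}$.

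The main obstacle I anticipate is the converse direction: making precise the claim that non-isomorphic completions obstruct the existence of a good-reduction twist. The positive direction is essentially a transport-of-structure argument and should be routine given the three earlier theorems. For the converse one must show that the local field $K_{\id{p}_0}$ is genuinely an invariant of the good-reduction problem --- that is, the minimal abelian extension of $L_{\id{p}}$ over which $E$ attains good reduction is forced by the CM character to be (an unramified twist of) $K_{\id{p}_0}$, so that a mismatch of completions is a genuine obstruction rather than something that could be repaired by a more exotic twist. I would handle this by examining the local Galois representation at $\id{p}$ attached to the CM curve, whose ramification is dictated by $\chi_{\id{p}}$, and observing that a twist can only alter it by an abelian character of $L_{\id{p}}$; the supply of such characters is controlled by $L_{\id{p}}^\times$ via local class field theory, and when $L_{\id{p}}\not\cong K_{\id{p}_0}$ this supply is insufficient to produce the trivial (good-reduction) local representation.
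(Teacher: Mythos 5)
Your treatment of the direct implication coincides with the paper's: there, parts (1)--(3) are dispatched in a single sentence, on the grounds that isomorphic completions give identical local reduction types, so the proofs of Theorems~\ref{theorem:CMnorootsofunity}, \ref{theorem:CMGaussian} and~\ref{theorem:CMEisenstein}, which rest on the purely local Lemmas~\ref{lemma:ramificationat2}, \ref{lemma:quartictwist} and~\ref{lemma:sextictwist}, apply verbatim. That part of your plan is fine.

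The genuine gap is the converse. The paper proves it by computing the image of inertia under the Weil representation of $E/\Q_p$ (legitimate since CM curves have no monodromy): it is cyclic of order $4$ when $d\neq1,3$, dihedral of order $8$ when $d=1$, and dihedral of order $12$ when $d=3$; two different mechanisms then finish the proof. For $d\neq1,3$ the decisive point is not about characters at all but about the curve: since $j(E)\neq0,1728$, the only twists of $E/L$ are \emph{quadratic}, and a quadratic character $\psi$ can never trivialize an inertia image containing an element of order $4$, because $((\rho\otimes\psi)(g))^{2}=\rho(g)^{2}\neq1$. For $d=1,3$, quartic resp.\ sextic twists do exist over $L$, but any such twist changes $\rho_E$ only by an abelian character on the subgroup where the CM is defined; so a twist with good reduction would force the relevant inertia image to be abelian, and the paper excludes this by checking that $K_{\id{p}}$ is the \emph{unique} quadratic extension of $\Q_p$ over which the restriction of the (non-abelian) inertia image becomes abelian. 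Your substitute for all of this --- that ``the supply of characters controlled by $L_{\id{p}}^{\times}$ via local class field theory is insufficient'' --- is not a valid mechanism: $L_{\id{p}}^{\times}$ has abelian characters of every finite order, in particular quartic and sextic ones, so there is no shortage of characters. What is limited is (i) which characters are realized by actual twists of the curve (only quadratic ones when $j\neq0,1728$), and (ii) what any character twist can do when the inertia image is non-abelian; your sketch contains neither ingredient, so the converse is unproved.

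Be aware also that no soft argument can succeed here, because the converse is delicate: it genuinely requires $p$ to be unramified in $L$, which holds in the paper's application (Corollary~\ref{coro:differentfields}, since distinct class-number-one fields have disjoint odd ramification) but is not visible in a counting-of-characters heuristic. If $p$ is odd and ramifies in $L$ with $L_{\id{p}}\not\cong K_{\id{p}}$, then $L_{\id{p}}\Q_p^{\mathrm{unr}}=\Q_p^{\mathrm{unr}}(\sqrt{p})=K_{\id{p}}\Q_p^{\mathrm{unr}}$, the image of $I_{L_{\id{p}}}$ collapses to the scalars $\pm1$, and the quadratic twist by a uniformizer \emph{does} produce good reduction. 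Concretely, for $K=\Q(\sqrt{-7})$, $E=$ \lmfdbec{49}{a}{1} and $L=\Q(\sqrt{-21})$, so that $L_{\id{p}}=\Q_7(\sqrt{7})$ because $-3$ is a square in $\Q_7$, the twist of $E/L$ by $\sqrt{-21}$ has invariants with valuations $(v_{\id{p}}(c_4),v_{\id{p}}(c_6),v_{\id{p}}(\Delta))=(4,7,12)$, hence good reduction at $\id{p}$ after rescaling by a uniformizer. So any complete argument must track exactly where the hypothesis on the completions (and unramifiedness of $p$ in $L$) enters the inertia computation; an approach that never computes the inertia image cannot see this at all.
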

\begin{proof}
  The proof of the first facts mimics that of Theorems
  \ref{theorem:CMnorootsofunity}, \ref{theorem:CMGaussian} and
  \ref{theorem:CMEisenstein}, as the completions being isomorphic
  implies that the local reduction types are the same. Suppose that
  the local completions are not isomorphic. Consider the Weil
  representation at $p$ attached to our elliptic curve $E/\Q$ (recall
  that CM elliptic curves have no monodromy, hence we do not need to
  consider the whole Weil-Deligne representation). Then it is easy to
  check that the image of inertia at $p$ equals:
  \begin{enumerate}
  \item  a cyclic group of order $4$ if $d \neq 1, 3$,
  \item the dihedral group of order $8$ if $d = 1$,
  \item the dihedral group of order 12 if $d=3$.
  \end{enumerate}
  Recall that the curve $E/L$ will have good reduction at $\id{p}$ if
  the restriction of the Weil representation to the inertia subgroup of $L$
  at $\id{p}$ is trivial.

  In the first case, since the completion of $L$ at $\id{p}$ is not
  isomorphic to the completion of $K$ at $\id{p}$, the restriction
  to the inertia subgroup of $L$ at $\id{p}$ still has order
  $4$. But since $d \neq 1$, we cannot take quartic twists, hence
  we cannot cancel the ramification, and all such curves $E/L$ will
  have bad reduction at $\id{p}$. In the other two cases, the image of
  the inertia subgroup of $\Q$ at $p$ is not abelian, and the unique
  quadratic extension whose restriction becomes abelian is
  $K_{\id{p}}$, hence by twisting we cannot kill the ramification for
    any other quadratic extension of $\Q_p$.
\end{proof}

\begin{coro}
  If $E/\Q$ is an elliptic curve with complex multiplication by an
  order in $K$ and $L$ is an imaginary quadratic field of class
  number $1$ different from $K$, then $E$ is the unique curve in its
  family of appropriate twists (quadratic, quartic or sextic) which has
  prime power conductor.
  \label{coro:differentfields}
\end{coro}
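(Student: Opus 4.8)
The plan is to deduce the corollary from Theorem~\ref{thm:otherfields}. Since $E/\Q$ has conductor $p^r$, the base change $E/L$ has good reduction away from the primes of $L$ dividing $p$, and at those primes it has additive reduction coming from its complex multiplication. Recall that over $K$ itself the infinite families of Theorems~\ref{theorem:CMnorootsofunity}, \ref{theorem:CMGaussian} and~\ref{theorem:CMEisenstein} arise from a single mechanism: one twists $E$ so as to \emph{cancel} the additive reduction at the ramified prime of $K$ while \emph{reintroducing} additive reduction at exactly one new prime $\id{q}$, giving conductor $\id{q}^2$ for infinitely many $\id{q}$. My aim is to show that this mechanism cannot operate over $L\neq K$, so that no such family exists.

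The key local input is the final assertion of Theorem~\ref{thm:otherfields}: if the completion $L_{\id{p}}$ is not isomorphic to the completion of $K$ at the prime above $p$, then no twist of $E/L$ acquires good reduction at $\id{p}$. I would first verify that this non-isomorphism holds for every prime $\id{p}\mid p$ whenever $L\neq K$. For odd $p$ this is immediate: among the nine fields $p$ ramifies only in $K$, so $K_{(p)}$ is ramified over $\Q_p$ while $L_{\id{p}}$ is unramified (as $p$ is inert or split in $L$), and they cannot be isomorphic. For $p=2$ the only cases to check are $K,L\in\{\Q(\sqrt{-1}),\Q(\sqrt{-2})\}$ and $L=\Q(\sqrt{-7})$, where one checks directly that $\Q_2(\sqrt{-1})$, $\Q_2(\sqrt{-2})$ and $\Q_2$ are pairwise non-isomorphic as extensions of $\Q_2$. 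Consequently the additive reduction of $E/L$ at every prime above $p$ is \emph{immovable}: it survives every twist.

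Granting this, I would finish as follows. Any twist of $E/L$ whose twisting character is ramified at some prime $\id{q}\nmid p$ acquires additive reduction there in addition to the immovable bad reduction above $p$, so its conductor is divisible by at least two distinct primes and is not a prime power. Hence a twist of prime power conductor must be unramified outside $p$. Because $\Cl(L)=1$, there are no nontrivial everywhere-unramified extensions of $L$, so the characters of $L$ unramified outside $p$ of order dividing the relevant twisting degree ($2$, $4$ or $6$) form a very small group; in fact the only nontrivial one that also retains good reduction at the primes above $2$ (via Lemma~\ref{lemma:ramificationat2}) is the character cutting out $LK/L$. Twisting $E$ by this character does not change its isogeny class, since $\rho_E$ is induced from the CM field $LK$; therefore every prime-power-conductor twist of $E/L$ is isogenous to $E$ itself.

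The main obstacle I anticipate is the local analysis at $p=2$: there several of the nine fields ramify, the twisting degrees are $4$ and $6$ rather than $2$, and ruling out ``extra'' characters unramified outside $2$ requires the $2$-adic computations of Lemma~\ref{lemma:ramificationat2} (together with its quartic and sextic analogues) in place of the clean ramification count available for odd $p$. The conceptual point to keep straight throughout is that ``unique'' is to be read up to isogeny: the CM twist by $LK/L$ shares the (prime power) conductor of $E$ but lies in its isogeny class, so it is $E$ that is singled out. This is genuinely a statement about the case where $p$ is inert or ramified in $L$, so that a single prime lies above $p$; when $p$ splits, $E/L$ and all its twists are bad at two primes, none has prime power conductor, and the assertion holds vacuously.
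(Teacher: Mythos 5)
Your first two steps are precisely the paper's proof: the whole of it consists of observing that for odd $d$ the prime $d$ is unramified in $L$, and that for $d=1,2$ the completions above $2$ differ, so that by the final assertion of Theorem~\ref{thm:otherfields} no twist of $E/L$ can acquire good reduction at the primes above $p$; everything beyond that is left implicit in the paper. Up to there your argument is sound, with one harmless slip: for $p=2$ and $2$ inert in $L$ the completion to compare is the unramified quadratic extension of $\Q_2$, not $\Q_2$ itself, though ramified and unramified extensions are never isomorphic, so the conclusion stands.

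The genuine gap is in your finishing step. First, for $K=\Q(\sqrt{-1})$ and $K=\Q(\sqrt{-3})$ the quartic and sextic twists of $E$ over $L$ are \emph{not} twists by characters of $G_L$ of order $4$ or $6$ --- that description requires $\mu_4\subset L$, resp.\ $\mu_6\subset L$, which fails exactly because $L\neq K$; they are classified by $L^*/(L^*)^4$, resp.\ $L^*/(L^*)^6$, and this larger group contains classes supported above $p$ that your character count misses. Second, and fatally for your conclusion, the claim that every prime-power-conductor twist is isogenous to $E$ is false. Take $K=\Q(\sqrt{-3})$, $E:y^2=x^3+16$ of conductor $3^3$, and $L=\Q(\sqrt{-7})$, in which $3$ is inert. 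The sextic twist $E_{-3}:y^2=x^3-48$ (minimal model $y^2+y=x^3-1$) has conductor $3^5$ over $\Q$ and good reduction at $2$, so over $L$ it lies in the sextic-twist family of $E$ and has prime power conductor $(3)^5$; since isogenous curves have equal conductors and $E/L$ has conductor $(3)^3$, it is not isogenous to $E$. Note that the class of $-3$ in $L^*/(L^*)^6$ is not that of $(-3)^3$, which is the class cutting out $LK/L$; your count is correct only in the quadratic cases $d\ge 7$, where indeed the sole extra class is $-d$ and the resulting twist is isogenous to $E$. The quartic case breaks similarly: over any $L$ in which $2$ is inert, the quartic twists of $y^2=x^3+x$ by $\pm 2^k$ all have conductor a power of $(2)$ and fall into several isogeny classes (conductors $(2)^5$, $(2)^6$, $(2)^8$). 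So neither literal uniqueness nor your ``unique up to isogeny'' reading can be established --- both fail for twists supported above $p$. What your first two steps (equivalently, the paper's proof) do establish, and what is actually used later in Theorems~\ref{thm:goodtwist} and~\ref{thm:additivetwist}, is that no twist of $E$ over $L$ has conductor supported away from $p$ (and, for $K=\Q(\sqrt{-1})$ or $\Q(\sqrt{-2})$, that no twist has odd conductor): the twisting mechanism that produces infinite families of prime-power-conductor curves over $K$ produces nothing over $L\neq K$.
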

\begin{proof} Let $D=\{1,2,3,7,11,19,43,67,163\}$. The field
  $\Q(\sqrt{-d})$ for each odd $d \in D$ has a different ramification
  set, hence we cannot get good reduction at $d$ by Theorem
  \ref{thm:otherfields}. For $d$ equal to $1$ or $2$ the completions
  are also different, hence we cannot get odd conductor from the curve
  with CM by an order of $\Q(\sqrt{-1})$ over $\Q(\sqrt{-2})$, or vice
  versa.
\end{proof}

\section{Prime power conductor curves with rational odd torsion}
\label{smallimage}
Recall the following result of \cite{Kenku} and \cite{Kamienny}:

\begin{thm}
  Let $K$ be a quadratic field, and $E/K$ be an elliptic curve. Then
  $E(K)_{tors}$ is isomorphic to one of the following groups:
  \begin{enumerate}
  \item $\ZZ/N$, with $ 1 \le N \le 18$ but $N \neq 17$.
  \item $\ZZ/2N \times \ZZ/2$ with $1 \le N \le 6$.
  \item $\ZZ/4 \times \ZZ/4$.
  \item $\ZZ/3 \times \ZZ/3N$ with $N=1,2$.
  \end{enumerate}
\end{thm}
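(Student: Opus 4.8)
The plan is to translate the existence of a torsion point into the existence of a quadratic point on a modular curve, and then to bound such points. Since $[K:\QQ]=2$, an elliptic curve $E/K$ carrying a $K$-rational torsion structure of type $T$ corresponds to a $K$-rational (non-cuspidal) point on the modular curve parametrizing elliptic curves with that torsion data; for cyclic torsion of order $N$ this is $X_1(N)$, and for the product types $\ZZ/M\times\ZZ/N$ it is the corresponding curve $X_1(M,N)$. The heart of the argument is to determine, for each level, whether the relevant modular curve possesses infinitely many or only finitely many quadratic points, and in the latter case to enumerate them.

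First I would reduce to bounding the primes $p$ that can occur as the order of a torsion point. A point of order $p$ gives a $K$-point $P$ on $X_1(p)$; applying the nontrivial element of $\Gal(K/\QQ)$ yields a conjugate point, so either $P$ is already $\QQ$-rational --- in which case Mazur's classification over $\QQ$ applies --- or the unordered pair $\{P,\bar P\}$ defines a $\QQ$-rational point on the symmetric square $X_1(p)^{(2)}$. I would therefore study the natural map $X_1(p)^{(2)}\to J_1(p)$ sending $\{P,Q\}$ to the class of $P+Q-2\infty$, composed with projection to a suitable quotient $A$ of the Jacobian.

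The crucial step (Kamienny's criterion) is to exhibit, for $p$ larger than an explicit bound, a quotient $A$ of $J_1(p)$ with $A(\QQ)$ finite --- produced using the Eisenstein ideal and Hecke operators in the spirit of Mazur --- together with the fact that the composite $X_1(p)^{(2)}\to A$ is a \emph{formal immersion} at a cusp after reduction modulo a prime of good reduction. A formal immersion forces distinct rational points with the same image to coincide, which (after accounting for the cuspidal contributions) rules out genuine quadratic points of order $p$ for all large $p$, leaving only finitely many primes to treat. The main obstacle is precisely this formal-immersion analysis: one must verify that the rank-zero quotient exists and that the relevant differentials are linearly independent in characteristic $p$. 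This is the most delicate input and the technical core of the Kamienny--Kenku--Momose work.

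Finally I would dispose of the remaining small prime powers and composite levels by direct analysis of the low-genus modular curves: curves of $\QQ$-gonality at most $2$ automatically carry infinitely many quadratic points and so survive in the list, whereas those of higher gonality have only finitely many quadratic points, which can be determined explicitly. The constraints on the product types follow from the Weil pairing: full level-$N$ structure forces $\zeta_N\in K$, and since a quadratic field contains $\zeta_N$ only for $N\in\{1,2,3,4\}$, this yields exactly the cases $\ZZ/4\times\ZZ/4$ and $\ZZ/3\times\ZZ/3N$ with $N=1,2$. Collating the surviving possibilities produces the stated list.
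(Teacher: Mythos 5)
You are not competing with a proof in the paper here: the paper does not prove this statement at all, but quotes it as the known classification of torsion over quadratic fields, citing Kenku--Momose and Kamienny, and uses it only as a black box (the sole consequence extracted is that the primes dividing the torsion order lie in $\{2,3,5,7,11,13\}$, which is then cut down further by Hasse bounds at primes above $2$). Your proposal is a reconstruction of the strategy of those cited works, and as an outline it is faithful: reduction to quadratic points on $X_1(N)$ (resp.\ $X_1(M,N)$), passage to $\Q$-rational points on the symmetric square, Kamienny's formal-immersion criterion into a quotient of the Jacobian with finite Mordell--Weil group to eliminate all sufficiently large primes, and gonality plus explicit analysis for the finitely many remaining levels. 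Two caveats are worth recording. First, the technical core --- the construction of the rank-zero (Eisenstein/winding) quotient, the verification of the formal immersion in positive characteristic, and the explicit enumeration of quadratic points on the intermediate-level curves --- is named but not carried out, so what you have is a correct proof architecture rather than a proof; for a theorem of this size that is unavoidable in a sketch. Second, your last step overstates what the Weil pairing gives: it provides only \emph{necessary} conditions on the product types. For example $\Z/6\times\Z/6$ over $\Q(\sqrt{-3})$ and $\Z/4\times\Z/8$ over $\Q(\sqrt{-1})$ are perfectly compatible with the pairing (since $\zeta_6$, resp.\ $\zeta_4$, lies in the field) yet do not occur in the list; ruling them out requires the same quadratic-point analysis applied to the curves $X_1(6,6)$, $X_1(4,8)$, etc., not the pairing alone. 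With that correction folded into your third paragraph, the outline matches the proof in the literature that the paper invokes.
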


In particular the primes dividing the order of the torsion subgroup
are $2, 3, 5, 7, 11$ and $13$. Let $\id{q} \mid 2$ be a prime and
$E$ be an elliptic curve of odd conductor. By Hasse's bound
\[
\#E(\FF_{\id{q}})=|\normid{q}+1-a_E(\id{q}) | \le \normid{q}+1 + 2 \sqrt{\normid{q}} < 11.
\]
In particular a curve of odd prime power conductor over $K$ can only
have a torsion point of odd prime order $\ell$ for
$\ell \in \{3,5,7\}$. 

While studying the possible torsion of an elliptic curve over $K$, the
case $\ell =3$ and $K=\Q(\sqrt{-3})$ is quite different from the
others. The reason is that since $K$ contains the sixth roots of
unity, the determinant of the Galois representation acting on
$3$-torsion points is trivial. The main results of the present section
are Theorem \ref{thm:3-torsionspecialcase}, which implies that curves
over $\Q(\sqrt{-3})$ of odd prime conductor and with a rational
$3$-torsion point, have (up to $3$-isogeny) discriminant valuation not
divisible by~$3$;
and a
list of all elliptic curves of prime power conductor and a point
of order $\ell \in \{3, 5, 7\}$ is given for all the other fields. The complete list
(omitting Galois conjugates) is given in Table~\ref{table:oddtorsioncurves}, whose completeness
will be proved in this section, in Theorems
\ref{thm:3-torsion},~\ref{thm:5-torsion}, and~\ref{thm:7-torsion}.

\begin{remark}
  Besides curves over $\Q(\sqrt{-3})$ with a rational $3$-torsion
  point, the only curves of prime conductor over imaginary quadratic
  fields with a rational $\ell$-torsion point are those over
  $\Q(\sqrt{-1})$ with a $3$-torsion point, and base-changes of
  elliptic curves defined over~$\Q$.
\end{remark}

Let $p$ be an odd prime, $K/\Q$ be a quadratic extension and $E/K$ be
an elliptic curve with a global minimal model. If $P \in E(K)$ has order $p$, then by
\cite[VII, Theorem 3.4]{Silverman} $P$ has algebraic integer coordinates in
the minimal model, except when
$p=3$ and $K/\Q$ is ramified at $3$ where, if $\id{p}_3$ denotes the
prime dividing $3$, the case $v_{\id{p}_3}(x(P),y(P))= (-2,-3)$ might
occur.

\begin{table}
\scalebox{.9}
{
\begin{tabular}{||r|r|r|r|r||}
\hline
Label & Weierstrass Coefficients & $\ell$ & $\Disc(E)$ & Field\\
\hline

\lmfdbec{19}{a}{2} & $[0, 1, 1, -9, -15]$ & $3$ & $-19^3$ & $1, 7, 11, 19, 43, 163$\\
\lmfdbec{19}{a}{3} & $[0, 1, 1, 1, 0]$ & $3$ & $-19$ & $1, 7, 11, 19, 43, 163$\\
  \lmfdbec{27}{a}{2} & $[0, 0, 1, -30, 63]$ & $3$ & $-3^5$ & $1, 7, 19, 43, 67, 163$\\
  \lmfdbec{27}{a}{3} & $[0, 0, 1, 0, -7]$ & $3$ & $-3^9$ & $1, 7, 19, 43, 67, 163$\\
\lmfdbec{27}{a}{4} & $[0, 0, 1, 0, 0]$ & $3$ & $-3^3$ & $1, 7, 19, 43, 67, 163$\\
\lmfdbec{37}{b}{2} & $[0, 1, 1, -23, -50]$ & $3$ & $37^3$ & $2, 19, 43, 163$\\
\lmfdbec{37}{b}{3} & $[0, 1, 1, -3, 1]$ & $3$& $37$ &  $2, 19, 43, 163$\\
  \lmfdbec{243}{a}{2} & $[0, 0, 1, 0, 20]$ & $3$& $-3^{11}$ &  $1, 7, 19, 43, 67, 163$\\
  \lmfdbec{243}{b}{2} & $[0, 0, 1, 0, 2]$ & $3$& $-3^7$ &  $1, 7, 19, 43, 67, 163$\\
  \lmfdbecnf{2.0.4.1}{757.1}{a}{1} & $[1+i,-1+i,1,-15+5i,-17+6i]$ & $3$ & $ (-26+9i)^3$ & $1$\\
\lmfdbecnf{2.0.4.1}{757.1}{a}{2} & $[1+i,1+i,1,2i,i]$ & $3$ & $ -26+9i$ & $1$\\
\lmfdbecnf{2.0.8.1}{9.1}{CMa}{1} & $[\sqrt{-2}, 1-\sqrt{-2}, 1, -1, 0]$ & $3$ & $({-1-\sqrt{-2}})^6$ & $2$\\
\lmfdbecnf{2.0.11.1}{9.3}{CMa}{1} & $[0, \frac{1-\sqrt{-11}}{2}, 1, \frac{-5-\sqrt{-11}}{2}, -2]$ & $3$ & $({\frac{-1+\sqrt{-11}}{2}})^6$ & $11$\\
\hline
\lmfdbec{11}{a}{2} & $[0, -1, 1, -10, -20]$ & $5$ & $-11^5$ & $1, 3, 11, 67, 163$\\
\lmfdbec{11}{a}{3} & $[0, -1, 1, 0, 0]$  & $5$ & $-11$ & $1, 3, 11, 67, 163$\\
\lmfdbecnf{2.0.4.1}{25.3}{CMa}{1} & $[1+i,i,i,0,0]$ & $5$ & $(2i+1)^3$ & $1$\\
\hline
\lmfdbecnf{2.0.3.1}{49.3}{CMa}{1} & $[0,\frac{-3+\sqrt{-3}}{2},\frac{1+\sqrt{-3}}{2},\frac{1-\sqrt{-3}}{2},0]$&$7$& $(\frac{-1-3\sqrt{-3}}{2})^2$ & $3$ \\
\hline
\end{tabular}}
\caption{Prime power conductor curves with an $\ell$-torsion point
  ($\ell$ odd)}
\label{table:oddtorsioncurves}
\end{table}

\begin{thm}
  Let $K=\Q(\sqrt{-3})$, and let $E/K$ be a curve with a point $P$ of
  order $3$ and prime power conductor $\id{p}^r$ and let $\tilde{E}$
  be the $3$-isogenous curve obtained by taking the quotient of $E$ by
  the group generated by $P$. Then the valuations at $\id{p}$ of
  $\Disc(E)$ and $\Disc(\tilde{E})$ are not both divisible by $3$,
  unless $E$ is one of the CM curves
  \lmfdbecnf{2.0.3.1}{729.1}{CMa}{1} or
  \lmfdbecnf{2.0.3.1}{729.1}{CMb}{1}.
\label{thm:3-torsionspecialcase}
\end{thm}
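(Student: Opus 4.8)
The plan is to exploit the rational $3$-torsion point to put $E$ in a normal form and read off both discriminants simultaneously. Writing $P=(0,0)$, any curve with a rational $3$-torsion point has a model
\[
  E\colon\ y^2+a_1xy+a_3y=x^3,
\]
for which a direct computation gives $\Disc(E)=a_3^3(a_1^3-27a_3)$, while the $3$-isogenous curve $\tilde{E}=E/\la P\ra$ has $\Disc(\tilde{E})=a_3(a_1^3-27a_3)^3$. Setting $b=a_3$ and $c=a_1^3-27a_3$, we obtain $v_{\id{p}}(\Disc(E))\equiv v_{\id{p}}(c)$ and $v_{\id{p}}(\Disc(\tilde{E}))\equiv v_{\id{p}}(b)\pmod 3$. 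The only substitutions preserving this shape and fixing $(0,0)$ are $(x,y)\mapsto(u^2x,u^3y)$, under which $b\mapsto u^3b$ and $c\mapsto u^3c$; hence $v_{\id{p}}(b)\bmod 3$ and $v_{\id{p}}(c)\bmod 3$ are genuine invariants, and they agree with the minimal-model valuations mod~$3$ since discriminants of different models differ by $12$th powers. Thus \emph{both} valuations are divisible by~$3$ precisely when $3\mid v_{\id{p}}(b)$ \emph{and} $3\mid v_{\id{p}}(c)$, and I would recast the theorem as the assertion that this double divisibility forces $E$ to be CM.

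Next I would turn the prime power conductor hypothesis into a finiteness statement, and in parallel record the structural consequences of the double divisibility. Since $E$ has good reduction away from~$\id{p}$, the model can be taken minimal at every prime different from~$\id{p}$, so $b$ and $c=a_1^3-27a_3$ are supported (up to units) at $\id{p}$ and, under the hypothesis, have $\id{p}$-valuation divisible by~$3$. Writing each as a unit times a cube times a power of a uniformiser, the relation $a_1^3=c+27b$ becomes a Thue--Mahler-type equation over $\Om_K$ whose solutions I would enumerate. Simultaneously, $3\mid v_{\id{p}}(\Disc(E))$ together with good reduction elsewhere makes $(\Disc(E))$ a cube, so by Theorem~\ref{thm:Mazur} the module $E[3]$ is finite flat over $\Om_K$, hence unramified outside~$3$; because $\zeta_3\in K$ the mod-$3$ cyclotomic character is trivial, so $\overline{\rho_{E,3}}$ is \emph{unipotent}, and the same applies to $\tilde{E}$. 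The off-diagonal entry is then a homomorphism $G_K\to\F_3$ that is unramified outside~$3$ and flat at~$3$; by explicit class field theory (as in the proof of Theorem~\ref{thm:mainthm}, using \cite{PARI2}) the only such extensions lie in $\Q(\zeta_9)$, which sharply restricts the admissible $E$.

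The decisive split is according to whether $\id{p}=(\sqrt{-3})$. For $\id{p}\neq(\sqrt{-3})$ I expect the reduction type at~$\id{p}$, combined with the constraint on $\overline{\rho_{E,3}}$, to leave no admissible solution of $a_1^3=c+27b$, so that both valuations cannot be divisible by~$3$ and no non-CM curve arises. The hard part will be the case $\id{p}=(\sqrt{-3})$, where the reduction is wild, the passage between the form $y^2+a_1xy+a_3y=x^3$ and the global minimal model is delicate (the point $P$ may have $v_{\id{p}}(x(P),y(P))=(-2,-3)$, as noted above), and where the exceptional curves actually live. Here I would solve the resulting local and Diophantine problem at $\sqrt{-3}$ directly and match the finitely many solutions against the sextic-twist families of Section~\ref{section CM}, identifying them as the two CM curves \lmfdbecnf{2.0.3.1}{729.1}{CMa}{1} and \lmfdbecnf{2.0.3.1}{729.1}{CMb}{1} of conductor $(\sqrt{-3})^6$.
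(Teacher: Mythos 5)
Your reduction is sound and coincides with the paper's own setup: the Kubert normal form $y^2+a_1xy+a_3y=x^3$, the two discriminants $a_3^3(a_1^3-27a_3)$ and $a_3(a_1^3-27a_3)^3$, and the observation that the hypothesis (plus prime power conductor and class number one) forces both $b=a_3$ and $c=a_1^3-27a_3$ to be units times cubes, so that $a_1^3=u\alpha^3+27v\beta^3$ with $u,v\in\Om_K^*$. But at exactly this point, which is the decisive step, your proposal stops being a proof: everything that follows is deferred (``I would enumerate'', ``I expect \ldots no admissible solution'', ``I would solve the resulting local and Diophantine problem directly''). The paper closes this gap with one specific idea that is absent from your plan: rewriting the relation as $a_1^3+u(-\alpha)^3+v(-3\beta)^3=0$, so that $(a_1:-\alpha:-3\beta)$ is a $K$-rational point on the twisted Fermat cubic $x^3+uy^3+vz^3=0$, and then invoking Lemma~\ref{lem:fermatcubics}: over $K=\Q(\sqrt{-3})$ every such twist has only $3$ or $9$ rational points, each having either a zero coordinate or all coordinates units (this is essentially Fermat's Last Theorem for exponent $3$ over the Eisenstein integers). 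This instantly reduces the problem to the case $a_1=0$ (yielding the CM curves of conductor $(9)$ and $(27)$, among them the two exceptional curves, after checking which isogenous pairs actually have both valuations divisible by $3$) together with a finite check of unit coordinates. By contrast, genuinely enumerating the solutions of a Thue--Mahler-type equation over $\Om_K$ requires heavy machinery (linear forms in logarithms, lattice reduction), and you give no argument that such an enumeration terminates with only the listed solutions; calling the equation ``Thue--Mahler-type'' does not discharge the obligation.

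The Galois-theoretic pillar you lean on is also defective, in two ways. First, the class field theory claim is false as stated: since $\zeta_3\in K$, Kummer theory with the $S$-units $\langle\zeta_6,\sqrt{-3}\rangle$ (for $S$ the prime above $3$) shows that $K$ has \emph{four} cyclic cubic extensions unramified outside $3$, for instance $K(\sqrt[3]{3})$, not only the one inside $\Q(\zeta_9)$; the paper's argument in Theorem~\ref{thm:mainthm} pins down $K(\sqrt[3]{\Disc(E)})\subseteq\Q(\zeta_9)$ only because that specific field is generated by the cube root of a \emph{unit}, which is not the same as bounding the full $3$-division field. Second, and more fundamentally, even a correct restriction on $K(E[3])$ cannot substitute for the Diophantine step: the set of elliptic curves over $K$ with a prescribed unipotent mod-$3$ representation (equivalently, prescribed $3$-division field) is an infinite family, exactly as in the Rubin--Silverberg families of Section~\ref{section:evenexponents} for mod $2$. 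The finiteness and identification of the exceptions can only come from the discriminant equation, which is precisely the part your proposal leaves unsolved.
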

\begin{proof}
  Suppose that $(\Disc(E))=\id{p}^{3r}$.  
  Let $P$ denote the point of order $3$ in $E(K)$. If $P$ has integral
  coordinates, we use the parametrization of elliptic curves with a
  rational point of order~$3$ given by Kubert in \cite[Table
  1]{Kubert}: such curves have a minimal model of the form
\begin{equation}
E:y^2+a_1 xy+ a_3 y= x^3,
\label{eq3-torsion1}
\end{equation}
where $a_i$ are algebraic integers, $P=(0,0)$ has order~$3$, with
discriminant
\begin{equation}
\Disc(E) = a_3^3 (a_1^3 -27 a_3).
\label{eq:discriminant3torsion1}
\end{equation}
If $P$ does not have integral  coordinates, then we take the minimal
equation to one of the form (\ref{eq3-torsion1}), with $v_{\id{p}}(a_1) \ge 0$ and $v_{\id{p}}(a_3)=-3$.

Note that over a field containing the
  $3$-rd roots of unity the cyclotomic character modulo
  $3$ is trivial so the representation of the Galois group acting on
  $E[3]$ has image in
  $\SL(2,3)$.  Then if it is reducible, with upper triangular
  matrices, the diagonal entries are both $+1$ or both
  $-1$.  So if the curve~$E$ has a rational point of order
  $3$, so does the isogenous curve
  $\tilde{E}$. We find that $\tilde{E}$ has equation $y^2+a_1 xy+ a_3
  y= x^3-5a_1a_3x-a_1^3a_3-7a_3^2$, and discriminant
    \begin{equation}
    \label{eq:discEt}
    \Disc(\tilde{E})=a_3(a_1^3-27a_3)^3.
  \end{equation}
  Suppose that both $\Disc(E)$ and $\Disc(\tilde{E})$ generate ideals which are cubes. Then
  \begin{align*}
    a_1^3-27a_3 & = u \alpha^3\\
    a_3& =  v \beta^3,
  \end{align*}
  for $u,v$ units and $\alpha, \beta \in
  K^\times$; in fact, $\alpha,\sqrt{-3}\beta\in\Om_K$.  In particular,
  $(a_1: -\alpha: -3\beta)$ is a $K$-rational point on the cubic curve
    \begin{equation}
x^3+uy^3+vz^3=0,     \label{eq:fermatcubics}
    \end{equation}
a twist of the Fermat cubic.  By Lemma~\ref{lem:fermatcubics} below,
all $K$-rational points~$(x:y:z)$ on all curves of the
form~(\ref{eq:fermatcubics}) either satisfy $xyz=0$, or (after scaling
so that $x,y,z\in\Om_K$ are coprime) that $x,y,z$ are all units.

Since $\alpha\beta\not=0$ the first case is possible only when
$a_1=0$.  Then $a_3$ is a unit times a cube, so by minimality is a
unit: this leads to the three isomorphism classes of curves with
conductor~$(9)$ or $(27)$. The first of these is
\lmfdbecnf{2.0.3.1}{81.1}{CMa}{1}, whose discriminant valuation is~$6$
but has a $3$-isogenous curve with discriminant valuation~$10$; the
other two, which are given in the statement of the theorem, are each
isomorphic to their $3$-isogenous curves (the isogeny being an
endomorphism) and have discriminant valuation~$6$.

In case none of the coordinates is zero, we consider separately the
finitely many cases where $\beta$ is integral or has valuation~$-1$,
and find that there are no more solutions.
\end{proof}

\begin{lemma}\label{lem:fermatcubics}
Let $K=\Q(\sqrt{-3})$ and $u,v\in\Om_K^*$.  The cubic
curve~(\ref{eq:fermatcubics}) has either $3$ or $9$ rational points,
which either lie on one of the lines $x=0$, $y=0$ or $z=0$, or have
projective coordinates which are all units.
\end{lemma}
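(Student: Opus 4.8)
The plan is to reduce the infinitely many pairs $(u,v)\in(\Om_K^*)^2$ to finitely many curves up to $K$-isomorphism, and then to pin down the rational points on each. Scaling the coordinates by units replaces $(u,v)$ by $(u\lambda^3,v\mu^3)$ with $\lambda,\mu\in\Om_K^*$, and since the only unit cubes are $\pm1$, the pair $(u,v)$ matters only modulo $\{\pm1\}$; thus I may assume $u,v\in\{1,\zeta_3,\zeta_3^2\}$, giving nine curves (fewer after using the $S_3$-symmetry that permutes the coordinates together with the coefficients). Each of these carries a $K$-rational point: one checks that at least one of the three ``coordinate-line'' conditions holds, or else $u,v$ lie in distinct nontrivial cube classes and $(1:1:1)$ (after a sign choice) solves the equation. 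Hence each is an elliptic curve with $j=0$, complex multiplication by $\Om_K$, and good reduction outside $\sqrt{-3}$.

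The heart of the matter is to show that these are \emph{all} the rational points. For this I would run the classical infinite descent of Fermat--Kummer in the Eisenstein integers $\Om_K=\Z[\zeta_3]$, adapted to allow the unit coefficients. Writing $\lambda$ for a generator of the ramified prime $(\sqrt{-3})$, so that $\alpha^3\equiv\pm1\pmod{\lambda^4}$ whenever $\lambda\nmid\alpha$, I would take a primitive integral point $(x:y:z)$ and argue as follows. If $\lambda\mid x$, then reducing $uy^3+vz^3\equiv0\pmod{\lambda^3}$ and using $y^3,z^3\equiv\pm1$ forces $\pm u\pm v\equiv0\pmod{\lambda^3}$; since a nonzero sum of two units has norm at most $4<27=N(\lambda^3)$, this gives $v=\pm u$ and launches the usual descent, which admits no primitive solution with $\lambda\mid xyz$ beyond the degenerate ones. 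The remaining non-unit prime factors are excluded by the same descent, leaving only $xyz=0$ or $x,y,z$ all units. The main obstacle will be carrying the descent through uniformly in the presence of the units $u,v$ --- precisely the feature that, unlike the pure Fermat cubic, \emph{does} admit all-unit solutions such as $1+\zeta_3+\zeta_3^2=0$ --- so the bookkeeping of which twists acquire such solutions must be handled with care. An equally acceptable alternative to the descent is to put each of the nine curves in Weierstrass form, recognise it as a rank-$0$ CM curve (appealing to the known Mordell--Weil computation over $K$), and read off that its torsion is $\Z/3$ or $\Z/3\times\Z/3$.

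Finally I would convert the structural statement into the precise count. On the line $z=0$ the equation $x^3+uy^3=0$ has three solutions exactly when $-u$ is a cube, i.e.\ when $u\equiv1\pmod{(\Om_K^*)^3}$, and similarly $-v$ (resp.\ $-v/u$) being a cube governs the line $y=0$ (resp.\ $x=0$); an elementary check shows that, in $\Om_K^*/(\Om_K^*)^3\cong\Z/3$, any two of these three conditions force the third, so either none, exactly one, or all three hold. When all three hold (the Fermat cubic) there are $9$ flexes on the coordinate lines and no all-unit point; when exactly one holds there are $3$; and when none holds the coordinate lines carry no rational point but there are exactly $9$ all-unit points of the shape $(1:\zeta_3^a:\zeta_3^b)$. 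In every case the total is $3$ or $9$, which is the asserted statement.
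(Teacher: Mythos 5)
Your reduction (units enter only modulo cubes, so $u,v\in\{1,\zeta_3,\zeta_3^2\}$ up to the $S_3$-action) and your final count via the three cube-class conditions are correct and match the paper, which works with the orbit representatives $(u,v)=(1,1)$, $(1,\zeta)$, $(\zeta,\zeta^2)$. The genuine gap is in the completeness step, and it sits at exactly the essential case. The Fermat--Kummer descent you invoke needs the cubic form to factor over $K$: it requires two of the three coefficients $1,u,v$ to lie in the same cube class, so that after scaling the equation reads $x^3+y^3=wz^3$ and one can factor $x^3+y^3=(x+y)(x+\zeta_3y)(x+\zeta_3^2y)$. Your own mod-$\lambda^3$ computation already signals this: a primitive point with $\lambda$ dividing one coordinate forces two coefficients to agree up to sign. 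But for the orbit of $(u,v)=(\zeta_3,\zeta_3^2)$ --- precisely the curves carrying the all-unit points --- the classes of $1,u,v$ are pairwise distinct, no permutation and unit-scaling produces two equal coefficients, and the form factors only over $K(\zeta_9)$; the descent cannot simply be transported there, since over $K(\zeta_9)$ the Fermat cubic itself acquires points with $xyz\neq0$, such as $(1:\zeta_9:\zeta_9^2)$, because $1+\zeta_9^3+\zeta_9^6=0$. On this curve your only remaining tool, the congruence modulo $\lambda^4$, yields the coefficient identity $1+\zeta_3+\zeta_3^2=0$ (which of course holds) and constrains the coordinates not at all; so the sentence ``the remaining non-unit prime factors are excluded by the same descent'' is precisely the statement left unproved, and there is no descent of this shape available to prove it.

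The paper closes that case with no descent: the curve~(\ref{eq:fermatcubics}), having a rational point, is isomorphic over $K$ to its Jacobian $x^3+y^3+uv\,z^3=0$, so its isomorphism class depends only on $uv$ modulo cubes; since $\zeta_3\cdot\zeta_3^2$ is a cube, the hard curve is isomorphic to the Fermat cubic, which has exactly nine rational points (the citation \cite[Proposition 17.8.1]{Ireland-Rosen} is used only for the curves with a coefficient equal to $1$, where the factorization exists), and the nine visible unit points therefore exhaust the rational points. Your fallback route --- Weierstrass form, rank $0$, torsion $\Z/3$ or $\Z/3\times\Z/3$ --- is this same idea, but as written it is an appeal to an unspecified ``known Mordell--Weil computation'': making it rigorous requires identifying each twist with its Jacobian $y^2=x^3-432(uv)^2$ and then knowing that group over $K$, which for the relevant twists is exactly the content of the lemma. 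So you should either supply a genuine reference for those Mordell--Weil groups over $K$ or reproduce the paper's isomorphism argument; without one of these, the proposal does not establish the lemma.
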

\begin{proof}
After permuting the coordinates, scaling by units and absorbing cubes,
there are only three essentially different equations, those with
$(u,v)=(1,1)$, $(1,\zeta)$, and $(\zeta,\zeta^2)$ where $\zeta\in K$
is a $6$th root of unity.  When $u=1$, it is well-known that all
points have one zero coordinate (see \cite[Proposition
  17.8.1]{Ireland-Rosen}).  There are $9$ such points (all the flexes)
when $v=1$, and $3$ when $u=\zeta$.  The curve with
$(u,v)=(\zeta,\zeta^2)$ is isomorphic to the one with $(u,v)=(1,1)$,
since the isomorphism class depends only on $uv$ modulo cubes, and
hence also has $9$ points; these are $(\zeta^{2k}:\zeta^{2l+1}:1)$ for
$k,l\in\{0,1,2\}$.
\end{proof}

\begin{thm}
  Let $K=\Q(\sqrt{-d})$ be an imaginary quadratic field of class
  number $1$, $K \neq \Q(\sqrt{-3})$ and $E/K$ an elliptic curve of
  odd prime power conductor with a point of order $3$. Then $E$ is
  isomorphic to one of the following:
  \begin{enumerate}
  \item the curve \lmfdbec{19}{a}{2} or \lmfdbec{19}{a}{3} over
    $\Q(\sqrt{-d})$  for $d=1, 7, 11, 19, 43, 163$;
  \item the curve \lmfdbec{27}{a}{2} or \lmfdbec{27}{a}{3}  or \lmfdbec{27}{a}{4}  over
    $\Q(\sqrt{-d})$ for $d=1, 7, 19, 43, 67, 163$;
  \item the curve \lmfdbec{37}{b}{2} or \lmfdbec{37}{b}{3}  over
    $\Q(\sqrt{-d})$ for $d=2, 19, 43, 163$;
  \item the curve \lmfdbec{243}{a}{2} or \lmfdbec{243}{b}{2} over
    $\Q(\sqrt{-d})$ for $d=1, 7, 19, 43, 67, 163$;
  \item $K=\Q(\sqrt{-1})$ and $E$ is one of the $3$-isogenous curves
\[
  y^2 + (1+i) xy + i y = x^3 + (-1+i) x^2 + (-14-8i) x + (-10-20i),
\]
with label \lmfdbecnf{2.0.4.1}{757.1}{a}{1},
\[    y^2 + (1+i) xy + i y = x^3 + (1+i) x^2 + (1-i) x + 2,
  \] with label \lmfdbecnf{2.0.4.1}{757.1}{a}{2},
or their Galois conjugates with labels
\lmfdbecnf{2.0.4.1}{757.2}{a}{1} and \lmfdbecnf{2.0.4.1}{757.2}{a}{2}.
\item $K=\Q(\sqrt{-2})$ and $E$ is the curve
\[
  y^2 + \sqrt{-2} xy + y = x^3 + (1-\sqrt{-2}) x^2 -x,
  \]
    \text{with label \lmfdbecnf{2.0.8.1}{9.1}{CMa}{1}},
    or its Galois conjugate with label
    \lmfdbecnf{2.0.8.1}{9.3}{CMa}{1};
\item $K=\Q(\sqrt{-11})$   and $E$ is the curve
\[
    y^2 + y = x^3 + \frac{1-\sqrt{-11}}{2} x^2 +
    \frac{-5-\sqrt{-11}}{2} x - 2,
  \]with label \lmfdbecnf{2.0.11.1}{9.3}{CMa}{1}, or its Galois
  conjugate with label
  \lmfdbecnf{2.0.11.1}{9.1}{CMa}{1}.
  \end{enumerate}
\label{thm:3-torsion}
\end{thm}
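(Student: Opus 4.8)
The plan is to follow the route of Theorem~\ref{thm:3-torsionspecialcase} via Kubert's parametrisation, but without assuming the discriminant is a cube. I would first put $E$ in the form~(\ref{eq3-torsion1}), $y^2+a_1xy+a_3y=x^3$ with $P=(0,0)$ of order~$3$, so that $\Disc(E)=a_3^3(a_1^3-27a_3)$ by~(\ref{eq:discriminant3torsion1}), and reduce to a minimal model of this shape. Since $3$ ramifies in $\Q(\sqrt{-d})$ only for $d=3$ (the excluded field), the prime $3$ is unramified in all eight fields, so the exceptional case of a non-integral torsion point does not arise and $a_1,a_3\in\Om_K$. Because the conductor is an odd prime power $\id p^r$ and $K$ has class number~$1$, the discriminant equals $\epsilon\pi^n$ for a generator $\pi$ of $\id p$ and a unit $\epsilon$; as $\Om_K$ is a UFD, both $a_3$ and $a_1^3-27a_3$ are then supported only at $\id p$, say $a_3=u\pi^a$ and $a_1^3-27a_3=w\pi^b$ with $u,w\in\Om_K^*$.

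This yields the central relation $a_1^3=27u\pi^a+w\pi^b$, which I would analyse according to whether $\id p\mid 3$. For $\id p\nmid 3$, comparing $\id p$-adic valuations forces $3\mid\min(a,b)$ when $a\ne b$; extracting the common cube reduces matters to an equation $t^3-s^3=(\text{unit})\pi^m$ for a $\id p$-unit $t=a_1/\pi^k$, with $s=3u_0$ or $s$ a unit (using that every unit is a cube, as $K\ne\Q(\sqrt{-3})$). The factorisation $t^3-s^3=(t-s)(t^2+st+s^2)$ has two factors whose common divisor is supported only above~$3$, so since $\id p\nmid3$ one of them is a unit, leaving a unit equation with finitely many solutions. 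The case $a=b$, together with the general bound on the exponents, is controlled by minimality of the Weierstrass model (if $\pi\mid a_1$ and $\pi^3\mid a_3$ the model is non-minimal), which caps $a$ and $b$ and guarantees a finite search.

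When $\id p\mid 3$ the factor $27$ no longer separates the two terms, so I would instead bound the exponents directly by minimality and solve, over each of the finitely many fields in which $3$ is divisible by $\id p$, the resulting finite list of Thue (or Thue--Mahler) equations $a_1^3-27a_3=w\pi^b$ explicitly, for instance in \cite{PARI2}. At this point one has an effectively computable finite set of candidate pairs $(a_1,a_3)$; the remaining work is to minimise, discard isomorphic duplicates and Galois conjugates, and attach the reduction type and LMFDB label to each survivor, keeping track of the distinction between $E$ and its $3$-isogenous curve $\tilde E$ (whose discriminant is $a_3(a_1^3-27a_3)^3$ by~(\ref{eq:discEt})). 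One then checks that most survivors are base-changes of the rational prime-power-conductor curves with a rational $3$-point (\lmfdbec{19}{a}{3}, \lmfdbec{27}{a}{4}, \lmfdbec{37}{b}{3}, and their isogenous and related curves), surviving precisely when the underlying rational prime remains a single prime power in $K$ (e.g.\ $27\mathrm{a}$ only when $3$ is inert, i.e.\ $d\equiv1\pmod3$), while a few genuinely new solutions give the conductor-$757$ curves over $\Q(\sqrt{-1})$ and the CM curves of conductor~$9$ over $\Q(\sqrt{-2})$ and $\Q(\sqrt{-11})$.

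The main obstacle I expect is the case $\id p\mid 3$, where the clean coprime factorisation is unavailable and finiteness must be forced through minimality bounds and a direct computational resolution of the Thue equations over each field; a close second is the bookkeeping needed to pass correctly between the Kubert model, its minimal model, and the isogenous curve without over- or under-counting isomorphism classes.
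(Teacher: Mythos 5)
Your strategy is essentially the paper's (Kubert model $y^2+a_1xy+a_3y=x^3$ with discriminant $a_3^3(a_1^3-27a_3)$ supported at $\id{p}$, case split on $\id{p}\mid3$, difference-of-cubes factorization), but the key step in your $\id{p}\nmid3$ case contains an error, and it would eliminate precisely some of the curves the theorem asserts exist. With $a_3=u\pi^a$ and $a_1^3-27a_3=w\pi^b$, your reduction to ``$t^3-s^3=(\text{unit})\,\pi^m$'' is only valid when $a<b$: there, dividing by $\pi^a$ gives $t^3-(3u_0)^3=w\pi^{b-a}$, whose right-hand side is prime to $3$, so the two factors $t-s$ and $t^2+st+s^2$ are coprime and one of them is indeed a unit. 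But when $b<a$ (which, given minimality and $3\mid\min(a,b)$, means exactly $b=0$, i.e.\ $a_1^3-27a_3\in\Om_K^*$), dividing out gives $t^3-s^3=27u\pi^{a}$: the factor $27$ does not disappear, the right-hand side \emph{is} divisible by the primes above $3$, and the inference ``the gcd of the factors is supported above $3$, hence one factor is a unit'' breaks down. This sub-case contains genuine solutions: $(a_1,a_3)=(10,37)$ and $(-8,-19)$ give \lmfdbec{37}{b}{2} and \lmfdbec{19}{a}{2}, and there $t-s=\pm9$ while $t^2+st+s^2=3\cdot37$, resp.\ $3\cdot19$, so \emph{both} factors are non-units divisible by $3$; over $\Q(\sqrt{-1})$ the same sub-case yields $(1\pm9\sqrt{-1},\,-9\mp26\sqrt{-1})$, i.e.\ \lmfdbecnf{2.0.4.1}{757.1}{a}{1} and its conjugate. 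As written, your argument would ``prove'' these curves do not exist, contradicting the statement itself. The correct conclusion here (and the one the paper uses) is weaker: one factor is coprime to $\id{p}$ and therefore divides $27$ --- not necessarily a unit --- both factors are divisible by the primes above $3$, and one then enumerates $a_1$ with $a_1-s=3^k\cdot(\text{unit})$, $1\le k\le3$, and $a_1^3\equiv s^3\pmod{27}$ to obtain the finite list.

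Two smaller points. First, you assert without justification that a model of Kubert shape can be taken minimal at all primes; this is exactly what guarantees that $a_3$ and $a_1^3-27a_3$ are supported only at $\id{p}$, and it is not automatic (the paper proves it by noting that the ideal generated by $c_6$ and $\Disc$ in $\Z[a_1,a_3]$ contains $a_1^{15}$ and $3^3a_3^5$, using that $3$ is unramified in $K$). Second, in your $\id{p}\mid3$ case, minimality (``not both $\pi\mid a_1$ and $\pi^3\mid a_3$'') does \emph{not} bound $v_{\id{p}}(a_3)$ when $\pi\nmid a_1$, so finiteness there cannot come from ``minimality caps $a$ and $b$'' alone; it must come either from a further factorization argument as in the paper, or from genuine Thue--Mahler finiteness with unbounded prime-power exponent. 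Your computational route can be made to work, but only with that understanding.
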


\begin{proof}
As in Theorem~\ref{thm:3-torsionspecialcase}, we use the
parametrization of elliptic curves with a rational point of order~$3$
given by Kubert in \cite[Table 1]{Kubert}: such curves have a model of
the form
\begin{equation}
E:y^2+a_1 xy+ a_3 y= x^3,
\label{eq3-torsion}
\end{equation}
where $P=(0,0)$ has order~$3$, with discriminant
\begin{equation}
\Disc(E) = a_3^3 (a_1^3 -27 a_3).
\label{eq:discriminant3torsion}
\end{equation}
By scaling, we can choose a model of this form such that for all
primes~$\id{q}$ either $\id{q}\nmid a_1$ or $\id{q}^3\nmid a_3$. Then
the model is minimal at all primes, as we now show.  To be non-minimal
at a prime $\q$ implies that $\id{q}^6\mid c_6$ and $\id{q}^{12}\mid
\Disc(E)$, where $c_6$ is the usual invariant of the model.  The ideal
generated by $c_6$ and $\Disc(E)$ in the ring $\Z[a_1,a_3]$ contains
both $a_1^{15}$ and $3^3a_3^5$, so $\id{q}\mid a_1$ and $\id{q}\mid
a_3$: in case $\id{q}\mid3$, we need the fact that $3$ is not ramified
in~$K$.  By minimality, $v_{\id{q}}(a_3)\in\{1,2\}$; this implies
$v_{\id{q}}(\Disc(E))\le11$, contradiction.

Let $\id{p}=(\pi)$ be the unique prime dividing $\Disc(E)$. As above,
we can assume that either $\id{p}\nmid a_1$ or $\id{p}^3\nmid a_3$.
We can also scale by units, replacing $(a_1,a_3)$ by $(ua_1,u^3a_3)$,
and we note that for the fields under consideration every unit is a
cube.  Our strategy is to prove that $(a_1,a_3)$ lies in a small
finite set, and then systematically search through all possible
values.
\begin{enumerate}
\item If $a_3$ is a unit, we may assume by scaling that $a_3=1$. Then
  we can factor (\ref{eq:discriminant3torsion}) as
\[
\Disc(E) = a_1^3 -27 = (a_1 -3) (a_1^2 + 3  a_1 +9 ).
\]
We consider the following cases:
\begin{enumerate}
\item If $\id{p}\nmid3$ then the factors are coprime, so one is a
  unit. If $u=a_1-3\in\Om_K^*$ we get four solutions
  $(a_1,a_3)=(4,1)$, $(2,1)$, $(3\pm\sqrt{-1},1)$ which give the
  curves \lmfdbec{37}{b}{3}, \lmfdbec{19}{a}{3},
  \lmfdbecnf{2.0.4.1}{757.1}{a}{2} and
    \lmfdbecnf{2.0.4.1}{757.2}{a}{2}.  If $a_1^2 + 3 a_1
      +9 = u \in\Om_K^*$ then $(2a_1+3)^2 = 4u-27 \in
      \{-23,-31,-27\pm4\sqrt{-1}\}$ which has no solution in~$K$.
\item If $\id{p} \mid 3$ then $\id{p} \mid a_1$.  If
  $v_{\id{p}}(a_1)=1$, we write $a_1=\pi b$.  In the inert case,
  $\pi=3$ and $\Disc(E)=27(b^3-1)=27(b-1)(b^2+b+1)$.  If either factor
  is a unit one finds no solutions, otherwise both are divisible
  by~$3$, so write $b=1+3c$; now the second factor is $3(1+3c+3c^2)$
  so $1+3c+3c^2$ is a unit.  Only $u=1$ gives a solution: $c=-1$,
  $b=-2$ and $a_1=-6$.  The pair $(a_1,a_3)=(-6,1)$ yields the curve
  \lmfdbec{27}{a}{4}.  In the split case we get $\Disc(E) =
  \pi^3(b^3-\overline{\pi}^3) = \pi^3(b-\overline{\pi})
  (b^2+b\overline{\pi}+\overline{\pi}^2)$.  Elementary computations
  reveal two solutions: $b=\sqrt{-2}$ giving
  $(a_1,a_3)=(\sqrt{-2}-2,1)$ and the curve
  \lmfdbecnf{2.0.8.1}{9.1}{CMa}{1}; and $b=-2$ giving
  $(a_1,a_3)=(-1-\sqrt{-11},1)$ and the curve
  \lmfdbecnf{2.0.11.1}{9.1}{CMa}{1}; together with their Galois
  conjugates.

  If $a_1=0$ or $v_{\id{p}}(a_1)\ge2$ then $v_{\id{p}}(\Disc(E))=3$.  This gives
  the equation $a_1^3=27+\pi^3u$ with $u$ a unit.  When $3$ is inert
  we have $\pi=3$ and $(a_1/3)^3=1+u\in\{2,0,1\pm\sqrt{-1}\}$, giving
  just one solution $(a_1,a_3)=(0,1)$ and the curve
  \lmfdbec{27}{a}{4}.  When $3=\pi\overline{\pi}$ we have
  $(a_1/\pi)^3=\overline{\pi}^3\pm1$ but this is not a cube.  (Here,
  $\pi=1\pm\sqrt{-2}$ or $\pi=(1\pm\sqrt{-11})/2$.)
\end{enumerate}

\item Now suppose that $a_3$ is not a unit. If $a_1=0$, then $3$ is
  inert in $K$ and $a_3 = 3^j$, with $j \in \{1,2\}$ by the minimality
  of the model, corresponding to the curves \lmfdbec{243}{b}{2} and
  \lmfdbec{243}{a}{2} respectively. Suppose that $a_1 \neq 0$.
  \begin{enumerate}
    \item If $\id{p} \mid a_1$, the minimality of the model implies
      that $v_{\id{p}}(a_3) < 3$; scaling by units we can assume that
      $a_3=\pi^j$ with $j\in\{1,2\}$.

  If $\id{p}\nmid3$ then $v_{\id{p}}(a_1^3-27a_3)=v_{\id{p}}(a_3)=j$
  so $a_1^3=\pi^j(27+v)$ for some unit~$v$, but none of these
  expressions is a cube.  Hence $\id{p}\mid3$.

  If $v_{\id{p}}(a_1)=1$, we have $v_{\id{p}}(a_1^3-27a_3)=3$ so
  $a_1^3=27a_3+\pi^3u$ with $u$ a unit.  In the inert cases
  $(a_1/3)^3=a_3+u$, whose only solution is $(a_1,a_3)=(6,9)$ which
  yields the curve \lmfdbec{27}{a}{3}.  In the split cases we find no
  solutions.

  If $v_{\id{p}}(a_1)\ge2$, we have
  $v_{\id{p}}(a_1^3-27a_3)=v_{\id{p}}(27a_3)=3+j$ so
  $0\not=a_1^3=27a_3+\pi^{3+j}u=\pi^j(27+\pi^3u)$ with $u$ a unit. This has
  no solutions.

\item If $\id{p} \nmid a_1$, then $a_1^3-27a_3$ is a unit and we can
  scale so the unit is~$1$, so $a_1^3=1+27a_3$.

  If $\id{p}\mid3$ then from $27a_3=a_1^3-1\equiv(a_1-1)^3\pmod3$ we
  have $\id{p}\mid a_1=1$ and either $a_1-1$ or $a_1^2+a_1+1$ has
  valuation~$1$, but no solutions arise.

  Hence $\id{p}\nmid3$. Now $27a_3 = a_1^3-1 = (a_1-1)(a_1^2+a_1+1)$,
  where the gcd of the factors divides~$3$.  One of the factors is
  coprime to~$\id{p}$, so divides~$27$, and both factors are divisible
  by the prime or primes dividing~$3$, so $3\mid(a_1-1)$.  Suppose
  that we are in the case that $a_1-1\mid27$.  In case $3$ is inert,
  write $a_1-1=3^ku$ with $u$ a unit and $k\in\{1,2,3\}$.  The only
  cases where $a_1^3\equiv1\pmod{27}$ are $(a_1,a_3)=(10,37)$ and
  $(-8,-19)$, giving the curves \lmfdbec{37}{b}{2} and
  \lmfdbec{19}{a}{2} respectively.  In case $3$ splits as
  $3=\omega\overline{\omega}$ we have
  $a_1=1\pm\omega^k\overline{\omega}^l$ with $k,l\in\{1,2,3\}$; the
  only solutions are those with have $k=l=2$ which have already been
  seen.  Secondly, if $a_1^2+a_1+1=d\mid27$ then the quadratic
  in~$a_1$ has discriminant $4d-3$ which must be a square; enumeration
  of cases shows no solutions.

  \end{enumerate}
  \end{enumerate}

In summary we find that the only solutions~$(a_1,a_3)$, up to scaling
by units and Galois conjugates, are $(a_1,1)$ for $a_1\in\{0, 2, 4,
-6, 3+\sqrt{-1}, -2+\sqrt{-2}, -1-\sqrt{-11}\}$, and $(0,3)$, $(0,9)$,
$(6,9),(10,37)$, and $(-8,-19)$.
\end{proof}

\begin{remark}
  It is an interesting question to determine whether there are
  infinitely many curves over $\Q(\sqrt{-3})$ with a point of order
  $3$ and prime conductor. Based on numerical evidence, it seems quite
  plausible that this is indeed the case, but we did not focus on this particular
  problem.
\end{remark}

\begin{thm}
  Let $K=\Q(\sqrt{-d})$ be an imaginary quadratic field of class number $1$ and $E/K$
  an elliptic curve of odd prime power conductor with a point of order
  $5$. Then either $E$ is isomorphic to the curve \lmfdbec{11}{a}{2} or
  \lmfdbec{11}{a}{3} for $d=1, 3, 11, 67, 163$ or $K=\Q(\sqrt{-1})$ and $E$ is the curve:
  \begin{eqnarray}
    \label{eq:curves5torsion}
    y^2 + (i+1) xy+iy = x^3 + ix^2,\qquad&&\text{with label \lmfdbecnf{2.0.4.1}{25.3}{CMa}{1}},
  \end{eqnarray}
or its Galois conjugate with label \lmfdbecnf{2.0.4.1}{25.1}{CMa}{1}.
\label{thm:5-torsion}
\end{thm}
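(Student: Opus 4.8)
The plan is to follow the strategy of Theorem~\ref{thm:3-torsion}, using in place of Kubert's $3$-torsion model the Tate normal form for a point of order~$5$. A curve with a rational point~$P$ of order~$5$ has a model
\[
  E_c:\quad y^2+(1-c)xy-cy = x^3-cx^2,
\]
with $P=(0,0)$ and a single parameter $c\in K$, and a direct computation gives
\[
  \Disc(E_c)=c^5\,(c^2-11c-1).
\]
Since $\ell=5\neq3$, the result of \cite[VII, Theorem 3.4]{Silverman} used above guarantees that $P$ has integral coordinates in a minimal model, and I would first use this, together with the class number being~$1$, to arrange that $c\in\Om_K$.

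The core of the argument is a dichotomy forcing one of the two factors of $\Disc(E_c)$ to be a unit. The factors are coprime, since $c^2-11c-1=c(c-11)-1$ gives $\gcd(c,c^2-11c-1)=1$. Next I would analyse reduction: at a prime $\id{q}\mid c$ the model reduces to a nodal cubic with $c_4\equiv1$, so $E_c$ has multiplicative reduction and the model is minimal at~$\id{q}$; at a prime $\id{q}\mid(c^2-11c-1)$ coprimality gives $\id{q}\nmid c$, so $a_3=-c$ is a unit and the model is again minimal at~$\id{q}$. Hence \emph{every} prime dividing $\Disc(E_c)$ is a prime of bad reduction, so must equal the conductor~$\id{p}$; in particular neither factor is divisible by a prime above~$2$ (as $\id{p}$ is odd), and by coprimality~$\id{p}$ divides at most one of them. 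Therefore either $c\in\Om_K^*$ or $c^2-11c-1\in\Om_K^*$.

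In the first case there are only finitely many units~$c$ to check. For each I would factor the integer $c^2-11c-1$ and keep those~$c$ for which it is a unit times a prime power. This yields $c=\pm1$, where $c^2-11c-1=\mp11$ gives the curve \lmfdbec{11}{a}{3} whenever $(11)$ is a prime or prime-square ideal of~$\Om_K$, together with $c=\pm\sqrt{-1}$ over $\Q(\sqrt{-1})$, where $c^2-11c-1$ has norm $125=5^3$ and is a prime cube, giving the CM curve \lmfdbecnf{2.0.4.1}{25.3}{CMa}{1} and its conjugate. In the second case $c^2-11c-1=u$ is a unit equation; writing $c=(11\pm\sqrt{125+4u})/2$, a solution in~$K$ requires $125+4u$ to be a square in~$K$, and running through the finitely many units~$u$ shows this occurs only for $u=-1$, giving $c=11$, $\Disc(E_{11})=-11^5$ and the curve \lmfdbec{11}{a}{2}. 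Finally I would note that $(11)$ is prime or a prime square exactly when $11$ is inert or ramified in~$K$, i.e.\ when $\kro{-d}{11}\neq1$, which holds precisely for $d\in\{1,3,11,67,163\}$ and produces the field conditions in the statement.

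The step I expect to be most delicate is the minimality analysis underpinning the dichotomy, and in particular ruling out that a prime above~$2$ lurks in one of the two factors. Here the conductor is trivial but the displayed model need not be minimal at~$2$; the point is that the argument above (multiplicativity when $\id{q}\mid c$, and $a_3=-c$ a unit when $\id{q}\mid c^2-11c-1$) shows the model is already minimal at every prime dividing the discriminant, so any such divisibility would force genuine bad reduction and hence contradict oddness of the conductor. Once the dichotomy is established, the theorem follows from the two finite searches described above.
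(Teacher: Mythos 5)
Your strategy is in essence the paper's own (Kubert/Tate parametrization of a point of order $5$, coprime factorization of the discriminant, then unit equations), and your two finite searches do produce the correct list; the problem is the opening reduction. You claim that Silverman's integrality theorem plus class number one lets you take the Tate parameter $c$ in $\Om_K$. That inference is false: integrality of the \emph{coordinates} of $P$ in a minimal Weierstrass model does not transfer to the Tate parameter, because passing to Tate normal form involves a shear by $a_4/a_3$ and a rescaling, neither of which preserves integrality. A concrete counterexample is \lmfdbec{11}{a}{2}: its points of order $5$, such as $(16,60)$ and $(5,5)$, are integral, yet the two Tate parameters attached to the subgroup they generate are $11$ and $-1/11$. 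This is exactly why the paper works with the homogenized model $E_{a,b}$ (i.e.\ $c=a/b$ with $\gcd(a,b)=1$), whose discriminant $a^5b^5(a^2-11ab-b^2)$ has \emph{three} pairwise coprime factors, and proves that \emph{two} of them must be units. Your dichotomy (one of $c$, $c^2-11c-1$ is a unit) is the sub-case in which $b$ is a unit. It cannot see, let alone rule out, a hypothetical curve for which neither $a$ nor $b$ is a unit: for such a curve the parameters attached to the points of order $5$ in the given subgroup are $a/b$ and $-b/a$, both non-integral, so your search never encounters it. Excluding it requires the three-factor argument (two distinct primes would divide the discriminant of a model that is minimal with multiplicative reduction at both, contradicting prime power conductor) --- essentially the homogeneous computation you tried to bypass.

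There is a second, smaller gap in the minimality analysis, which your own closing paragraph flags as the delicate point. The inference ``$\id{q}\mid(c^2-11c-1)$ and $a_3=-c$ a unit, hence the model is minimal at $\id{q}$'' is not a valid criterion: $a_3$ is not translation-invariant, and non-minimality survives translations which can make $a_3$ a unit (for instance $y^2+2y=x^3+p^4x+p^6-1$ is non-minimal at $p$ with $a_3=2$). The correct route is through $c_4$: one computes $c_4\equiv 5(11c+1)\equiv 5c^2\pmod{c^2-11c-1}$, which settles minimality and multiplicative reduction at all $\id{q}\nmid 5$; but at primes above $5$ the invariants $c_4$ and $\Delta$ genuinely share a factor, and one needs the paper's finer congruence there (showing $v_{\id{q}}(c_4)=1<4$) to conclude minimality. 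This matters for your argument: if the model were non-minimal at such a prime $\id{q}$, the curve could have good reduction there while $\id{q}^{12}$ divides $c^2-11c-1$, and your key assertion that every prime dividing $\Disc(E_c)$ is the conductor prime --- hence the whole dichotomy --- would fail. Your proposal never mentions the prime $5$ at all, whereas the paper devotes a separate computation to it.
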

Note that the curve \lmfdbecnf{2.0.4.1}{25.3}{CMa}{1} has CM by
$\ZZ[i]$; in particular, the conductor valuation is $2$.

\begin{proof}
Again we use Kubert's parametrization from \cite[Table 1]{Kubert}: such curves have
a model of the form
\[
y^2+(1-d)xy-dy = x^3-dx^2,
\]
with $d \in K$. An integral model is then given by
\[
E_{a,b}:y^2+(b-a)xy-ab^2y = x^3 -abx^2, \qquad \text{ with }\gcd(a,b)=1.
\]
This model has discriminant $\Disc(E_{a,b}) = a^5b^5(a^2-11ab-b^2)$,
and $c_4$-invariant $a^{4} - 12 a^{3} b + 14 a^{2} b^{2} + 12 a b^{3}
+ b^{4}$.  In the polynomial ring $\Z[a,b]$ the ideal these generate
contains $5a^{15}$ and~$5b^{15}$, so they are coprime away from~$5$.
Hence at all primes except possibly those dividing~$5$ the model
$E_{a,b}$ is minimal, and has multiplicative reduction.

Let $\id{p}$ be a prime above~$5$ and suppose that $\id{p}$ divides
both $\Disc(E_{a,b})$ and $c_4(E_{a,b})$.  Then $\Disc(E_{a,b}) \equiv
a^5b^5(a+2b)^2\pmod{\id{p}}$ and
$c_4(E_{a,b})\equiv(a+2b)^4\pmod{\id{p}}$, so $\id{p}\mid(a+2b)$.
Writing $a=-2b+c$ where $\id{p}\mid c$ and using the fact that $5$ is
not ramified in~$K$, we find that $c_4\equiv-5b^4\pmod{\id{p}^2}$, so
  $\id{p}^2\nmid c_4$.  Hence $E_{a,b}$ is also minimal at~$\id{p}$.
  Examples show that the reduction at such a prime may be either good
  or additive.

In the factorization $\Disc(E_{a,b}) = a^5b^5(a^2-11ab-b^2)$, the
three factors are pairwise coprime. Then for $\Disc(E_{a,b})$ to be a
prime power, two of $a$, $b$ and $a^2-11ab-b^2$ are units. Since we may
scale $a$ and~$b$ simultaneously by a unit we may assume that either
$a=1$ or $b=1$.  When $a=1$, $b=\pm1$ leads to discriminant~$-11$
while $a^2-11ab-b^2=\pm1$ leads to discriminant~$-11^5$, giving the
curves \lmfdbec{11}{a}{2} or \lmfdbec{11}{a}{3} over any field in
which $11$ is not split.  Over $\Q(\sqrt{-1})$, additionally,
$(a,b)=(1,\pm\sqrt{-1})$ gives the curves with
conductor~$(2\pm\sqrt{-1})^2$ as stated, while over $\Q(\sqrt{-3})$
none of the additional units gives a solution.  Lastly if $a$ is not a
unit we may assume that $b=1$ and require $u=a^2-11a-1\in\Om_K^*$ so
that $125+4u$ is a square in~$K$; the only possibility is $u=-1$ and
$a=11$ giving discriminant $-11^5$ again.
\end{proof}

\begin{thm}
  Let $K$ be an imaginary quadratic field of class number $1$ and $E/K$ be an elliptic
  curve of odd prime power conductor with a point of order $7$. Then
  $K=\Q(\sqrt{-3})$ and $E$ is isomorphic to
  \begin{eqnarray}
    \label{eq:curves7torsion}
    y^2 + a y = x^3 + (a-2)x^2+(1-a)x\qquad&&\text{with label \lmfdbecnf{2.0.3.1}{49.3}{CMa}{1}},
  \end{eqnarray}
where $a = \frac{1+\sqrt{-3}}{2}$.
\label{thm:7-torsion}
\end{thm}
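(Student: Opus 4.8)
The plan is to follow the template of Theorems~\ref{thm:3-torsion} and~\ref{thm:5-torsion}, now using Kubert's parametrization \cite[Table~1]{Kubert} of elliptic curves with a rational point of order~$7$. This produces an integral model $E_{a,b}$ (with $\gcd(a,b)=1$), homogeneous in the coprime parameters $a,b\in\Om_K$, whose discriminant factors as
\[
\Disc(E_{a,b}) = a^7 b^7 (a-b)^7\,h(a,b), \qquad h(a,b)=a^3-8a^2b+5ab^2+b^3.
\]
The three linear factors correspond to the three rational cusps of $X_1(7)$, each of width~$7$, while the cubic~$h$ (whose discriminant is $7^4$, so that $h$ cuts out the cubic field $\Q(\zeta_7)^+$) corresponds to the three conjugate cusps of width~$1$. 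As in the earlier proofs, I would first show that $E_{a,b}$ is minimal and has multiplicative reduction away from the primes above~$7$, so that (exactly as in the proof of Theorem~\ref{thm:5-torsion}) the four factors $a$, $b$, $a-b$, $h(a,b)$ are pairwise coprime away from~$7$; the remaining issue of minimality at the primes above~$7$ I would settle by an explicit local substitution, using that $7$ is unramified in every~$K$.

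Given this, for $\Disc(E_{a,b})$ to be a power of a single odd prime~$\id{p}$, at least three of the four coprime factors must be units (up to primes above~$7$). I would organise the argument into the four cases indexed by which factor carries~$\id{p}$. If one of the linear factors is the prime-power factor, scaling by a unit so that $b=1$ forces $a-1\in\Om_K^*$, which restricts $a$ to the finite set $1+\Om_K^*$; one then checks for each such~$a$ whether $h(a,1)$ is a unit, and these finitely many possibilities yield no curve of odd prime power conductor (for instance over $\Q$ one gets only $a=2$, with even conductor, or the degenerate $a=0$). The remaining case has $a$, $b$ and $a-b$ all units while $h(a,b)$ carries~$\id{p}$. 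The decisive point is that a pair of units whose difference is again a unit can exist only when $\Om_K^*$ contains the sixth roots of unity: over the other eight fields the difference of two units has norm $2$ or~$4$, hence is never a unit, so this case is empty unless $K=\Q(\sqrt{-3})$.

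For $K=\Q(\sqrt{-3})$, writing $w=(1+\sqrt{-3})/2$, the unit triples reduce, after scaling and Galois conjugation, to $(a,b)=(1,w)$ (with $a-b=\bar w\in\Om_K^*$). One computes that $h(1,w)=\pi^2$, where $\pi=\frac{-1+3\sqrt{-3}}{2}$ is a prime of norm~$7$; thus $\Disc(E_{1,w})=\pi^2$ up to a unit, with (additive, CM) conductor $\pi^2$, giving exactly the curve \lmfdbecnf{2.0.3.1}{49.3}{CMa}{1} and its Galois conjugate. The hard part is the bookkeeping at the primes above~$7$: because the one surviving example has its bad prime lying over~$7$, the four factors are no longer coprime there, so I must confirm both that $E_{a,b}$ remains minimal at such primes — so that $\Disc_{\min}$ really equals $h(a,b)$ up to units — and that the finitely many non-unit linear-factor cases contribute nothing new over~$7$ for any of the nine fields. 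Once this local analysis at~$7$ is settled, assembling the finite enumeration gives the single curve claimed, completing the proof.
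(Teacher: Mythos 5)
Your strategy is the same as the paper's proof of Theorem~\ref{thm:7-torsion}: Kubert's model, the factorization $\Disc(E_{a,b})=a^7b^7(a-b)^7h(a,b)$ into pairwise coprime factors, a finite enumeration over units, the observation that two units with unit difference force $K=\Q(\sqrt{-3})$, and the computation $h(1,w)=\bigl(\tfrac{-1+3\sqrt{-3}}{2}\bigr)^2$ producing the CM curve of conductor norm $49$ --- all of this matches. But two of your steps are wrong as written. First, you settle minimality at the primes above $7$ ``using that $7$ is unramified in every $K$''; this is false, since $7$ ramifies in $\Q(\sqrt{-7})$, which is one of the nine fields. Your proposed local argument therefore does not apply to the prime $(\sqrt{-7})$, and the ramified prime is exactly where such arguments are most delicate (compare the paper's separate treatment of $3$ over $\Q(\sqrt{-3})$ in Theorem~\ref{thm:3-torsionspecialcase}). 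The conclusion can be rescued --- for instance, even if the model were non-minimal at $(\sqrt{-7})$, so that a single one of the coprime factors had valuation at least $12$ there, the remaining constraints (e.g.\ $b=\pm7^6$ with $a-b$ a unit) force $a$ to be even --- but no such argument appears in your proposal; you lean on the false unramifiedness claim instead.

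Second, your case split is not exhaustive. The sentence ``if one of the linear factors is the prime-power factor, scaling by a unit so that $b=1$ forces $a-1\in\Om_K^*$'' is correct only in the sub-case $\id{p}\mid a$, where $b$ and $a-b$ are indeed both units. If instead $\id{p}\mid(a-b)$, then $a$ and $b$ are units, and after scaling $b=1$ the element $a-1$ \emph{is} the prime power, not a unit; this situation is also excluded from your remaining case, which demands $a-b\in\Om_K^*$. It is a real case that must be checked: over $\Q(\sqrt{-3})$ the pair $(a,b)=(1,w^2)$ has $a,b$ units and $a-b=2-w$ of norm $3$, an odd prime, so it could a priori give a curve of odd prime power conductor; it fails only because $h(1,w^2)=10-13w$ has norm $139$, a verification your enumeration never performs. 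You can repair this either by running the analogous finite check for each of the three linear factors separately (in effect what the paper does, organizing its cases by which of $a$, $b$ is a unit), or by invoking the order-three symmetry $(a,b)\mapsto(b,b-a)$ of the parametrization, which preserves $\Disc(E_{a,b})$, permutes the factors $a$, $b$, $-(a-b)$ cyclically and sends $h$ to $-h$, so that one may assume $\id{p}\mid a$; but you state neither.
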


\begin{proof}
In this case, a general elliptic curve with a $7$-torsion point is
given by
\[
E_{a,b}: y^2+(b^2+ab-a^2) xy-(a^3b^3-a^2b^4)y = x^3-(a^3b-a^2b^2) x^2 ,
\]
where $a,b \in \mathcal{O}_{K}$ and $\gcd(a,b)=1$ (see
\cite{Kubert}). Its discriminant is given by
$\Disc(E_{a,b}) = a^7 b^7 (a-b)^7 (a^3-8a^2b+5a b^2+b^3)$.  As in the
previous theorem, we find that $\gcd(\Disc(E_{a,b}), c_4(E_{a,b}))$
is not divisible by any prime except possibly those dividing~$7$, and that
the model $E_{a,b}$ is minimal even at such primes.

Since $(a,b)=1$, two of $a$, $b$ and $c=(a-b)^7 (a^3-8a^2b+5a b^2+b^3)$
are units.   If $a$ is a unit but not $b$ then we can scale so $a=1$
and now $1-b$ is a unit.  None of the possibilities gives an odd prime
power discriminant.  Similarly if $b=1$ and $a$ is not a unit. Lastly
if $a=1$ and $b$ is a unit, the only possibility which works is when
$b$ is a 6th root of unity, giving the curve as stated in the theorem
(which is isomorphic to its Galois conjugate, while not being a
base-change from $\Q$).
\end{proof}

\begin{remark}
  Note that in all the exceptional cases, the discriminant valuation is
  at most $5$, except for the curve \lmfdbec{11}{a}{2} over $\Q(\sqrt{-11})$,
  where it is $10$ (as $11$ ramifies in the extension).
\end{remark}

\section{Prime power conductor curves with rational $2$-torsion}
\label{section 2 torsion}
Our goal in this section is to classify all elliptic curves~$E$
defined over~$K$ with odd prime power conductor which have a
$K$-rational point of order~$2$.  As a result we will see
(Corollary~\ref{coro:2torsion-odddiscriminant} below) that each
isogeny class of such curves contains a curve whose discriminant has
odd valuation.  Our results here extend those of Shumbusho, who in his
2004 thesis~\cite{Shumbusho} considered elliptic curves over the same
fields as we do, with prime conductor and rational $2$-torsion.

In this section we will make essential use of the local criterion
of Kraus from \cite[Th\'eor\`eme 2]{Kraus-c4c6}, which we state here
for the reader's convenience.

\begin{prop}[(Kraus)]
   Let $K$ be a finite extension of $\Q_2$ with valuation
   ring~$\Om_K$, normalized valuation $v$ and ramification
   degree~$e=v(2)$.  Let $c_4$, $c_6$, $\Delta$ in $\Om_K$ satisfy
   $c_4^3-c_6^2=1728\Delta\not=0$.  Then there exists an integral
   Weierstrass model of an elliptic curve over~$K$ with invariants
   $c_4$ and~$c_6$ if and only if one of the following holds:
   \begin{enumerate}
     \item $v(c_4)=0$, and there exists $a_1\in\Om_K$ such that
       $a_1^2\equiv -c_6\pmod{4}$.
     \item $0<v(c_4)<4e$, and there exist $a_1,a_3\in\Om_K$ such that
       \begin{eqnarray*}
       d=-a_1^6+3a_1^2c_4+2c_6 &\equiv& 0 \pmod{16};\\
       a_3^2 &\equiv& d/16 \pmod{4};\\
       4a_1^2d &\equiv& (a_1^4-c_4)^2 \pmod{256}.
       \end{eqnarray*}
     \item $v(c_4)\ge 4e$, and there exists $a_3\in\Om_K$ such that
       $a_3^2\equiv c_6/8\pmod{4}$.
   \end{enumerate}
\label{prop:Kraus}
 \end{prop}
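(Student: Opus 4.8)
The plan is to reduce the existence of an integral Weierstrass model with prescribed invariants to the solvability of a short system of $2$-adic congruences in the coefficients $a_1,\dots,a_6$, taking systematic advantage of the residue characteristic being $2$. I work throughout with the standard quantities $b_2=a_1^2+4a_2$, $b_4=2a_4+a_1a_3$, $b_6=a_3^2+4a_6$, the identity $4b_8=b_2b_6-b_4^2$, and
\[
c_4=b_2^2-24b_4,\qquad c_6=-b_2^3+36b_2b_4-216b_6.
\]
The first simplification is that the odd factors hidden in $24$, $216$, $432$ and $1728$ are units in $\Om_K$, so every integrality statement becomes one about valuations alone. The second, equally useful, is that $3\in\Om_K^*$: the translation $x\mapsto x-a_2/3$ is integral, fixes $a_1,c_4,c_6$ and kills $a_2$, so without loss of generality $b_2=a_1^2$. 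After this normalisation the only freedom left is the choice of $a_1$ and $a_3$, and an integral model with the given invariants exists exactly when these can be chosen so that the forced quantities
\[
b_4=\frac{a_1^4-c_4}{24},\qquad b_6=\frac{a_1^6-3a_1^2c_4-2c_6}{432}=\frac{-d}{432}
\]
lie in $\Om_K$ and are realisable, realisability meaning $b_4\equiv a_1a_3\pmod2$ (to solve for $a_4$) and $b_6\equiv a_3^2\pmod4$ (to solve for $a_6$); here $d=-a_1^6+3a_1^2c_4+2c_6$ is the quantity in the statement.

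I would now read the congruences of case~(2) off this description. Since $v(432)=v(16)$, integrality of $b_6$ is exactly $d\equiv0\pmod{16}$; and as $-432/16=-27\equiv1\pmod4$, the realisability $b_6\equiv a_3^2\pmod4$ is exactly $a_3^2\equiv d/16\pmod4$. The third congruence encodes the two conditions on $b_4$ simultaneously. Substituting $a_1^4-c_4=24b_4$ and $d=-432b_6$ turns $4a_1^2d\equiv(a_1^4-c_4)^2\pmod{256}$ into $-1728b_2b_6-576b_4^2\equiv0\pmod{256}$; both terms carry a factor $2^6$, so this is equivalent to $b_2b_6-b_4^2\equiv0\pmod4$, i.e. $b_8\in\Om_K$. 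Combined with $d\equiv0\pmod{16}$ it forces $v\bigl((a_1^4-c_4)^2\bigr)\ge6e$, hence $a_1^4\equiv c_4\pmod8$ and $b_4\in\Om_K$ for free; and using $b_2=a_1^2$ with $b_6\equiv a_3^2\pmod4$ one gets $b_2b_6-b_4^2\equiv(a_1a_3)^2-b_4^2\pmod4$, which vanishes precisely when $b_4\equiv a_1a_3\pmod2$.

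The two remaining cases are the degenerate ends of the same analysis, organised by $v(c_4)=2v(a_1)$. When $v(c_4)\ge4e$ one may build the model with $a_1=0$: then $b_4=-c_4/24$ and $b_6=-c_6/216$ are automatically integral (the discriminant relation gives $8\mid c_6$), the condition on $b_4$ is vacuous, and only $a_3^2\equiv b_6\equiv c_6/8\pmod4$ survives, which is case~(3). When $v(c_4)=0$ one must take $a_1$ a unit; here I would prove directly that solvability of $a_1^2\equiv-c_6\pmod4$ is equivalent to the whole system, the relation $c_6^2\equiv c_4^3\pmod8$ coming from $c_4^3-c_6^2=1728\Delta$ being what ties the conditions on $c_4$ and on $c_6$ together. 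In each case sufficiency is immediate once the congruences hold, since the recipe above produces an honest integral model, and necessity follows by running the same computation backwards from an arbitrary model after normalising $a_2=0$.

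The step I expect to be the main obstacle is the necessity direction in cases~(1) and~(3), carried out uniformly in the ramification degree $e$. The delicate point is the structure of squares modulo $4$ and fourth powers modulo $8$ in a possibly highly ramified $2$-adic field: over $\Q_2$ every unit satisfies $u^2\equiv1\pmod8$, but this fails once $e\ge3$, so the bookkeeping of the nonlinear terms (the cube defining $c_6$, the square defining $b_6$) cannot lean on the rational case and must be done by hand. The role of the discriminant relation is precisely to trade a congruence on $c_4$ for one on $c_6$, and the crux is to verify that this trade loses no genuine condition across all three valuation ranges — equivalently, that the single congruences listed in cases~(1) and~(3) really are sufficient and not merely necessary.
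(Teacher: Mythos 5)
The paper itself gives no proof of this proposition: it is quoted, for the reader's convenience, from \cite[Th\'eor\`eme 2]{Kraus-c4c6}, so there is no internal argument to compare with, and your attempt must be judged against what a complete proof requires. Your framework is the right one (and is essentially Kraus's): since $3\in\Om_K^*$ you may kill $a_2$, after which a model with invariants $(c_4,c_6)$ exists if and only if some $a_1,a_3\in\Om_K$ make $b_4=(a_1^4-c_4)/24$ and $b_6=-d/432$ integral with $b_4\equiv a_1a_3\pmod 2$ and $b_6\equiv a_3^2\pmod 4$. Your treatment of case~(2) is complete and correct, and sufficiency in case~(3) (take $a_1=0$) is also correct, except that the condition on $b_4$ there is not ``vacuous'': it reads $2\mid c_4/24$, which is exactly where the hypothesis $v(c_4)\ge 4e$ is used.

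The genuine gaps are the two half-statements you defer, and neither is routine. First, \emph{sufficiency in case~(1)}: from $a_1^2\equiv-c_6\pmod 4$ and $c_4^3\equiv c_6^2\pmod{64}$ you get $a_1^4\equiv c_4^3\pmod 8$, but integrality of $b_4$ requires a fourth power congruent to $c_4$, not to $c_4^3$, modulo~$8$; you cannot bridge this by $c_4^2\equiv1\pmod 8$, which fails for units once $e\ge2$ (in $\Q_2(\sqrt{2})$ one has $(1+\sqrt{2})^2=3+2\sqrt{2}\not\equiv1\pmod 8$). The repair is to replace $a_1$ by the unit $x=c_4/a_1$, for which $x^4\equiv c_4^4/c_4^3=c_4\pmod 8$; one must then verify $-x^6+3x^2c_4+2c_6\equiv0\pmod{16}$ (a genuine computation, using $a_1^2=-c_6+4t$ and $c_4^3=c_6^2+1728\Delta$), and one still needs the compatibility $b_2b_6\equiv b_4^2\pmod 4$. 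In case~(2) that compatibility is handed to you by the hypothesis (your own reduction shows the third congruence is exactly $b_8\in\Om_K$), but in case~(1) it must be extracted from the hypothesis $\Delta\in\Om_K$ via the identity $b_2^2b_8=9b_2b_4b_6-8b_4^3-27b_6^2-\Delta$ together with $b_2=x^2$ being a unit; your sketch never invokes integrality of $\Delta$, and without it this direction cannot close. Second, \emph{necessity in case~(3)}: ``running the computation backwards'' from a model with $a_2=0$ gives $a_3^2\equiv b_6\pmod 4$, but $b_6$ and $-c_6/216$ differ by $a_1^2(a_1^4-3c_4)/432$, which need not be $\equiv0\pmod4$ a priori. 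One must prove $v(b_4)\ge e$: otherwise $b_4\equiv a_1a_3\pmod 2$ forces $v(a_1)+v(a_3)=v(b_4)$, while $a_1^4=c_4+24b_4$ with $v(c_4)\ge4e$ forces $4v(a_1)=3e+v(b_4)$, whence $v(a_3)=3\bigl(v(b_4)-e\bigr)/4<0$, contradicting $a_3\in\Om_K$; then $v(b_4)\ge e$ gives $v(a_1)\ge e$, and only then does $b_6\equiv c_6/8\pmod 4$ follow. Finally, your closing paragraph has the hard directions reversed: in case~(1) necessity is the easy direction (any model with $a_2=0$ gives $-c_6\equiv a_1^6\pmod 4$, so $a_1^3$ works) and sufficiency is the crux, while in case~(3) it is the other way around.
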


\subsection{Preliminaries on curves with odd conductor and rational $2$-torsion}
In this subsection and the next we
assume that $E$ is an elliptic curve defined over~$K$, with odd
conductor, and with a rational $2$-torsion point; later we will
specialize to the case where the conductor is an odd prime power.
Such an elliptic curve $E$ has an equation of the form
\begin{equation}
  \label{eq:equation}
  E_{a,b}:y^2 = x(x^2 + ax+b),
\end{equation}
where $a,b\in\Om_K$ and the given $2$-torsion point is
$(0,0)$. However, while it is easy to see that this model can be taken
to be minimal at all odd primes, we need to be more precise concerning
the primes dividing~$2$ where such a model cannot have good reduction.
To this end, we need to consider carefully the transformation from a
global minimal model for $E$ (which exists since $K$ has class
number~$1$) to this form. Let
\begin{equation}\label{eqn:min-model}
y^2+a_1xy+a_3y=x^3+a_2x^2+a_4x+a_6
\end{equation}
be a global minimal model for~$E$; its discriminant $\Disc_{\min}(E)$
is odd since $E$ has odd conductor.  To transform this model into
$E_{a,b}$ we first complete the square, then scale to make the equation
integral, and finally translate the $x$-coordinate so that the
$2$-torsion point has $x=0$.

After completing the square the right-hand side of the equation is
\begin{equation}
  \label{eq:2torsionpoly}
x^3 + (a_2+(a_1/2)^2)x^2 + (a_4+(a_1/2)a_3)x + (a_6+(a_3/2)^2)
\end{equation}
which is still integral, and minimal, at all odd primes.  Let
$\q=(\tau)$ be a prime of~$K$ dividing~$2$.  Then $a_1$ and $a_3$
are not both divisible by~$\q$, as otherwise $\q$ would divide the
discriminant, so (\ref{eq:2torsionpoly}) is not integral
at~$\q$.  To make the equation integral we scale $x$ by
$\tau^{2r}$ for some $r\ge1$ chosen to be minimal.  The minimal~$r$
depends on whether $E$ is ordinary or supersingular at~$\q$,
or equivalently whether $\vq(a_1)=0$ or $\vq(a_1)>0$.
\begin{prop}\label{Proposition:scaling}
Let $E$ be an elliptic curve of odd conductor over~$K$ with a
$K$-rational point of order~$2$ with minimal
equation~(\ref{eqn:min-model}).  Let $\q=(\tau)$ be a prime of~$K$ dividing~$2$.
\begin{enumerate}
\item If $E$ has ordinary reduction at~$\q$ (that is, if
  $\vq(a_1)=0$), then the minimal scaling to make
  (\ref{eq:2torsionpoly}) integral is $x\mapsto 2^2x$ (with scaling
  valuation $r=2e_2$).
\item If $E$ has supersingular reduction at~$\q$ (that is, if
  $\vq(a_1)>0$), then $K$ is ramified at~$2$ and
  $\vq(a_1)=1$. The minimal scaling is $x\mapsto \tau^2x$ (with $r=2$).
\end{enumerate}
In both cases, after scaling, (\ref{eq:2torsionpoly}) reduces
to~$x^2(x+u)$ modulo~$\q$ with $u$ odd.  In the supersingular case,
there is only one $K$-rational point of order~$2$, whose
$x$-coordinate (after scaling) is odd.
\end{prop}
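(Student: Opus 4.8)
The plan is to study the cubic
\[
 g(x)=x^3+\frac{b_2}{4}x^2+\frac{b_4}{2}x+\frac{b_6}{4},
\]
which is exactly~(\ref{eq:2torsionpoly}), where $b_2=a_1^2+4a_2$, $b_4=2a_4+a_1a_3$ and $b_6=a_3^2+4a_6$ are the usual invariants. The scaling under consideration is the admissible change of variables $(x,y)\mapsto(\tau^{2k}x,\tau^{3k}y)$, which multiplies the non-leading coefficients $b_2/4,\,b_4/2,\,b_6/4$ by $\tau^{2k},\tau^{4k},\tau^{6k}$, i.e.\ raises their $\q$-valuations by $2k,4k,6k$; the $x$-coordinate is multiplied by a factor of valuation $r=2k$. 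Thus the minimal $k$ is forced by whichever coefficient is most negative at~$\q$, and the reduction of the scaled polynomial modulo~$\q$ is governed by which coefficients then have valuation exactly~$0$. Everything therefore reduces to computing $\vq(b_2),\vq(b_4),\vq(b_6)$ in the two cases, using throughout that $E$ has good reduction at~$\q$, so $\vq(\Disc_{\min}(E))=0$.

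In the ordinary case $\vq(a_1)=0$, so the $4a_2$ term (of valuation $\ge2e_2$) is negligible and $\vq(b_2)=0$. Then $b_2/4$ has valuation $-2e_2$, which is the most negative possible since $b_4,b_6\in\Om_K$, so the minimal scaling has $r=2e_2$ (multiplication of $x$ by a factor of valuation $2e_2$, e.g.\ $4$), matching the claim. After scaling, the $X^2$-coefficient has valuation $0$ while the $X$- and constant coefficients have valuations $\vq(b_4)+3e_2\ge3e_2>0$ and $\vq(b_6)+4e_2\ge4e_2>0$; hence the polynomial reduces to $X^2(X+u)$ with $u$ a nonzero (odd) residue.

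The supersingular case is the crux, and pinning down $e_2=2$ and $\vq(a_1)=1$ is where I expect the real work to be. Here $\vq(a_1)>0$ gives $\bar a_1=0$, so the reduced curve is supersingular; good reduction forces $\bar a_3\ne0$ (otherwise $\bar y^2=\text{cubic}$ is singular in characteristic~$2$), whence $\vq(a_3)=0$ and $\vq(b_6)=0$. The decisive input is that a supersingular elliptic curve in characteristic~$2$ has no nontrivial $2$-torsion, so the given $K$-rational $2$-torsion point $P=(x_P,y_P)$ reduces to $O$ and lies in the formal group. Writing $n=\vq(t_P)\ge1$ for its formal parameter one has $\vq(x_P)=-2n$, $\vq(y_P)=-3n$. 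Substituting into $y_P=-(a_1x_P+a_3)/2$ (from $P$ being $2$-torsion on the minimal model): if $a_3$ dominated we would get $\vq(y_P)\ge-e_2\ge-2>-3n$, impossible, so $\vq(a_1x_P)=\vq(a_1)-2n<0$ dominates and $\vq(a_1)-2n-e_2=-3n$, i.e.\ $\vq(a_1)+n=e_2$. Since $\vq(a_1),n\ge1$ and $e_2\le2$, this forces $e_2=2$, $\vq(a_1)=1$, $n=1$. Consequently $\vq(b_2)=2$, $\vq(b_4)=1$, $\vq(b_6)=0$, so $b_2/4,b_4/2,b_6/4$ have valuations $-2,-1,-4$; the constant term is most negative, giving minimal $r=2$ (multiplication by $\tau^2$), after which the $X^2$-coefficient has valuation $0$ and the others are positive, yielding $X^2(X+u)$ again.

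Finally, for uniqueness of the rational $2$-torsion and the parity of its $x$-coordinate: the three roots $x_i$ of $g$ satisfy $\sum\vq(x_i)=\vq(b_6/4)=-4$, and all are negative since supersingularity puts every $2$-torsion point in the formal group. Any \emph{$K$-rational} root then has even valuation $-2n\le-2$, so two such roots would already sum to $\le-4$ and force the remaining root to have non-negative valuation, a contradiction. Hence $P$ is the unique $K$-rational $2$-torsion point, with $\vq(x_P)=-2$; after scaling its $x$-coordinate has valuation $0$, i.e.\ is odd, consistent with its reducing to the simple root $-u$ of $X^2(X+u)$. The single relation $\vq(a_1)+n=e_2$ in the third paragraph is what simultaneously delivers ramification and $\vq(a_1)=1$, and is the step I regard as the main obstacle.
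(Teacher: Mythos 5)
Your proof is correct, and in the ordinary case it coincides with the paper's argument (the coefficient $b_2/4$ of $x^2$ has valuation exactly $-2e_2$ and dictates the scaling). In the supersingular case, which you rightly single out as the crux, you take a genuinely different route. The paper argues purely with the Newton polygon of the cubic~(\ref{eq:2torsionpoly}): if $\vq(a_1)\ge e_2$ then all its coefficients are integral except the constant term, which has valuation $-2e_2$, so every root would have valuation $-2e_2/3\notin\Z$, contradicting the existence of the $K$-rational root; hence $\vq(a_1)<e_2$, forcing $e_2=2$ and $\vq(a_1)=1$, and then the coefficient valuations $(-2,-1,-4)$ give root valuations $(-2,-1,-1)$, from which uniqueness and parity follow because non-integral $K$-rational points have even $x$-valuation. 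You instead invoke the arithmetic of the reduction: supersingular curves in characteristic~$2$ have no nontrivial $2$-torsion, so $P$ lies in the kernel of reduction, $\vq(x_P)=-2n$ and $\vq(y_P)=-3n$, and the $2$-torsion relation $2y_P=-(a_1x_P+a_3)$ forces $\vq(a_1)+n=e_2$, whence $e_2=2$, $\vq(a_1)=n=1$; your uniqueness argument (root valuations sum to $-4$, all are negative, and rational ones are even) is then a mild variant of the paper's. The paper's method is more elementary, using nothing beyond valuations of the coefficients of the cubic; yours is conceptually sharper, since the single relation $\vq(a_1)+n=e_2$ explains at once why $2$ must ramify, why $\vq(a_1)=1$, and why $\vq(x_P)=-2$, the last of which feeds directly into the uniqueness and parity claims. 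One minor caution: your guiding slogan that the minimal scaling is ``forced by whichever coefficient is most negative'' is not the correct criterion in general --- the coefficient of $x^{3-j}$ gains valuation $jv$ under a scaling of valuation $v$, so what matters is each deficit divided by $j$ (together with the parity constraint on $v$) --- but since in both cases you verify integrality after the claimed scaling and that the $x^2$-coefficient becomes a unit while the others become positive, your stated conclusions are unaffected.
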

\begin{proof}
After scaling $x$ by $\tau^{2r}$, the coefficient of $x^{3-j}$ is
multiplied by~$\tau^{2rj}$.

First suppose that $\vq(a_1)=0$ (the ordinary case).  From the
coefficient of $x^2$ in (\ref{eq:2torsionpoly}) it is immediate that
$x\mapsto 4x$ is the minimal scaling which gives integral
coefficients. After scaling, the coefficient of $x^2$ is
$u=a_1^2+4a_2$, with $\q$-valuation~$0$, and the others are
divisible by~$8$ and $16$ respectively.

Now suppose that $\vq(a_1)>0$ (the supersingular case), which
implies $\vq(a_3)=0$. If $\vq(a_1) \ge e_2$, then all
coefficients in (\ref{eq:2torsionpoly}) are $\q$-integral except
the last one, which has $\q$-valuation $-2e_2$. But then all roots
have valuation $-2e_2/3$, which is not an integer, contradicting the
fact that the polynomial has a root in~$K$. It follows that this
supersingular case can only occur if $e_2=2$ and $\vq(a_1) =1$.
The coefficients in~(\ref{eq:2torsionpoly}) now have valuations~$-2$,
$-1$, $-4$, from which it follows that the roots (whether in~$K$ or an
extension) have valuations~$-2$, $-1$, $-1$; since the $x$-coordinates
of non-integral $K$-rational points must have even valuation, there
can be only one $K$-rational point of order~$2$, with $x$-coordinate
of valuation~$-2$.  To achieve integrality we must scale~$x$
by~$\tau^2$, after which the cubic reduces to $x^2(x+1)$
modulo~$\q$ and the $x$-coordinate of the $K$-rational point of
order~$2$ is odd.
\end{proof}

We will refer to the two cases of this proposition as ``the ordinary
case'' and ``the supersingular case'' respectively.

The model $E_{a,b}$ has invariants
\begin{equation}\label{eqn:Eab-invariants}
  \Delta=2^4b^2(a^2-4b), \qquad
  c_4 = 2^4(a^2-3b), \qquad
  c_6 = 2^5a(9b-2a^2).
\end{equation}
We set $\Disc(E)=b^2(a^2-4b)$, and compare this with
$\Disc_{\min}(E)$.  In the ordinary case, $\Delta=2^{12}\Disc_{\min}(E)$
so $\Disc(E)=2^8\Disc_{\min}(E)$; in the supersingular case,
$\Delta=\tau^{12}\Disc_{\min}(E)$ and $\Disc(E)=\pm2^2\Disc_{\min}(E)$,
with sign $-1$ for $\Q(\sqrt{-1})$ when $\tau=1+\sqrt{-1}$ and $+1$
for $\Q(\sqrt{-2})$ when $\tau=\sqrt{-2}$.

\begin{coro}\label{Cor:ab-q-valuations}
Let $(x_0,0)$ be the coordinates of the given $2$-torsion point on
the scaled model, so $x_0\in\Om_K$.  We obtain a model of the
form~$E_{a,b}$ by shifting the $x$-coordinate by~$x_0$.
\begin{enumerate}
\item In the ordinary case, if $\vq(x_0)>0$ then we obtain
  $(a,b)$ with $(\vq(a), \vq(b)) = (0, 4e_2)$, while if
  $\vq(x_0)=0$ then $(\vq(a), \vq(b)) = (e_2, 0)$.
\item In the supersingular case we always have $\vq(x_0)=0$,
  and $(\vq(a),\vq(b))=(k,0)$ with $k\ge3$.  (We include
  the possibility that $a=0$ here.)
\end{enumerate}
\end{coro}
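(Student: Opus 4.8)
The plan is to read off $\vq(a)$ and $\vq(b)$ from the explicit transformation carrying the scaled model of Proposition~\ref{Proposition:scaling} into the form $E_{a,b}$. Write the scaled integral cubic of~(\ref{eq:2torsionpoly}) as $f(X)=X^3+PX^2+QX+R$. By Proposition~\ref{Proposition:scaling} it reduces to $X^2(X+u)\pmod{\q}$ with $u$ odd, so $\vq(P)=0$ while $\vq(Q),\vq(R)\ge 1$. If $(x_0,0)$ is the chosen rational point of order~$2$ on this model, the final step to $E_{a,b}$ is the shift $X\mapsto X+x_0$, giving $a=P+3x_0$ and $b=Q+2Px_0+3x_0^2$; writing $x_1,x_2$ for the other two roots of $f$, this is the same as $a=2x_0-x_1-x_2$ and $b=(x_1-x_0)(x_2-x_0)$. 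Since $x_0$ reduces to a root of $X^2(X+u)$, either $\vq(x_0)>0$ (reduction to the double root $0$) or $\vq(x_0)=0$ (reduction to the simple root $-u$), and in the supersingular case Proposition~\ref{Proposition:scaling} already forces $\vq(x_0)=0$.

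I would then fix the exact valuations by feeding the mod-$\q$ reductions of $a$ and $b$ into the known value $\vq(\Disc(E))=\vq\bigl(b^2(a^2-4b)\bigr)$ recorded after~(\ref{eqn:Eab-invariants}): this equals $8e_2$ in the ordinary case and $2e_2$ (with $e_2=2$) in the supersingular case. In the ordinary case with $\vq(x_0)>0$ one gets $a\equiv u$ and $b\equiv 0\pmod\q$, so $\vq(a)=0$, $a^2-4b$ is a unit, and $2\vq(b)=8e_2$ forces $\vq(b)=4e_2$. In the ordinary case with $\vq(x_0)=0$, since $2\equiv0\pmod\q$ one finds $a\equiv-2u\equiv0$ and $b\equiv 3u^2\pmod\q$, so $\vq(b)=0$ and $\vq(a^2-4b)=8e_2$; as $\vq(4b)=2e_2<8e_2$, the leading terms must cancel, which requires $\vq(a^2)=\vq(4b)=2e_2$, i.e.\ $\vq(a)=e_2$. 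The same computation in the supersingular case gives $\vq(b)=0$, and the discriminant relation then only yields $\vq(a)\ge 2$.

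The one delicate point, and the main obstacle, is upgrading this to $\vq(a)\ge 3$ in the supersingular case; here I would use that $2$ is ramified with residue field $\F_2$. From the Newton polygon of $f$ computed in Proposition~\ref{Proposition:scaling} the roots have valuations $0,1,1$, so $\vq(x_1)=\vq(x_2)=1$ and $\vq(Q)=3$. Rewriting $x_0a=2x_0^2+x_1x_2-Q$, the term $Q$ has valuation $\ge 3$, while $2x_0^2$ and $x_1x_2$ each have valuation exactly~$2$ (using $\vq(2)=e_2=2$ and that $x_0$ is a unit). Dividing each of these two terms by $\tau^2$ produces a unit, and every unit reduces to $1$ in the residue field $\F_2$, so their $\tau^2$-leading coefficients coincide and $2x_0^2+x_1x_2\equiv 0\pmod{\q^3}$. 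Hence $\vq(x_0a)\ge 3$, and since $x_0$ is a unit, $\vq(a)\ge 3$ (with $a=0$ permitted). I expect this residue-field cancellation to be the only subtle step: the ordinary case and every $\vq(b)$ computation are routine once the transformation formulas and the value of $\vq(\Disc(E))$ are available.
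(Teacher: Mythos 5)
Your proposal is correct and follows essentially the same route as the paper's proof: the dichotomy according to which root of $X^2(X+u)$ the point $x_0$ reduces to, the discriminant valuations $\vq(\Disc(E))=8e_2$ (ordinary) and $2e_2$ (supersingular) recorded after Proposition~\ref{Proposition:scaling}, and, for the supersingular lower bound, the fact that a sum of two units must vanish in the residue field $\F_2$. The paper performs that last cancellation directly on $a^2=(a^2-4b)+4b$ (if $\vq(a)=2$ then $a^2-4b$ would be $\tau^4$ times a difference of two units, hence of valuation at least $5$, contradicting $\vq(a^2-4b)=4$), while you run it through the root identity $x_0a=2x_0^2+x_1x_2-Q$; this is an equivalent variant, with the one caveat that your claim $\vq(Q)=3$ should be cited from the coefficient valuations $(-2,-1,-4)$ computed in the proof of Proposition~\ref{Proposition:scaling} (shifted by $+4$ after scaling), not from the root valuations $(0,1,1)$ alone, which only give $\vq(Q)\ge1$.
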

\begin{proof}
After the shift by~$x_0$ we have $\q\mid b$ if $x_0$ reduces to
the double root modulo~$\q$ and $\q\mid a$ otherwise; $\q$
does not divide both since there is no triple root modulo~$\q$.
In the supersingular case, $\q$ must divide~$a$.

Suppose that $\q\mid b$.  Then we are in the ordinary case, and
$8e_2=\vq(\Disc(E))=2\vq(b)$ so $\vq(b)=4e_2$.

Alternatively, suppose that $\q\mid a$.  Then in the ordinary
case, $\vq(a^2-4b)=\vq(\Disc(E))=8e_2>2e_2=
\vq(4b)$, so $\vq(a^2)=2e_2$ and $\vq(a)=e_2$.
In the supersingular case, $\vq(a^2-4b)=4=\vq(4b)$ so
$\vq(a)\ge2$; however it is easy to see that $\vq(a)=2$
leads to a contradiction since the residue field at~$\q$ has only
size~$2$.
\end{proof}

Note that $a=0$ can only happen in the supersingular case.  Such
curves have CM by $\Z[\sqrt{-1}]$ and were considered in
Section~\ref{section CM}.

In what follows, it would be enough to determine curves up to quadratic
twist, since given one elliptic curve it is straightforward (see
\cite{CremonaLingham}) to find all of its twists with good reduction
outside a fixed set of primes.  The quadratic twists of~$E_{a,b}$ have
the form $E_{\lambda a,\lambda^2b}$ for $\lambda\in K^*$.  Taking
$\lambda=\mu^{-1}$ with $\mu\in\Om_K$, where $\mu\mid a$ and
$\mu^2\mid b$, we obtain a curve with a smaller discriminant, by a
factor $\mu^6$.  In our situation of curves with odd conductor, such a
factor~$\mu$ must be odd, supported on primes of bad reduction, and
also square-free (by minimality of the equation at all odd primes).
However, if $\p=(\pi)$ is an odd prime factor of the conductor
such that $\pi\mid a$ and $\pi^2\mid b$, it can happen\footnote{This
  can be avoided over~$\Q$, since for every odd prime ideal $(p)$,
  either $\pm p\equiv1\pmod{4}$, and $\Q(\sqrt{\pm p})$ is
  unramified at~$2$.} that the twist
$E^\pi$ acquires bad reduction at a prime~$\q$ dividing~$2$: this
is the case if $\q$ ramifies in $K(\sqrt{\pi})$.

For a prime~$\p=(\pi)$ we say that the pair $(a,b)$ is
\emph{minimal} at~$\p$ (or~$\pi$) if either $\pi\nmid a$ or
$\pi^2\nmid b$.  When $(a,b)$ are the parameters obtained from a curve
of odd conductor, we have already seen that $(a,b)$ is minimal
at~$\q$ for all $\q\mid2$ since either $a$ or~$b$ is not
divisible by~$\q$, while for the primes~$\p$ dividing the
conductor, $(a,b)$ may not be $\p$-minimal.  However, as already
observed, we can always assume that either $\pi^2\nmid a$ or
$\pi^4\nmid b$.

Since we cannot assume that a minimal twist of a curve with odd
conductor still has odd conductor, we will need to consider curves
with non-minimal $(a,b)$.  In the prime power conductor case, this
means that we will consider curves in sets of four twists, a base
curve $E=E_{a,b}$ which is $\p$-minimal, may have bad reduction at
primes dividing~$2$, and may even have good reduction at~$\p$; and
the twists $E^s$ of $E$ by~$s\in\{\varepsilon, \pi, \varepsilon\pi\}$,
where $\p=(\pi)$.

For example, over $K=\Q(\sqrt{-2})$ we have seen in a previous
section that the elliptic curve \lmfdbec{256}{d}{2}, which has conductor
$(\sqrt{-2})^{10}$, has infinitely many quadratic twists of odd prime
square conductor.

\subsection{Classification of curves with odd conductor and $2$-torsion}

We continue with the notation of the previous subsection: $E$ is an
elliptic curve with odd conductor and a $K$-rational point of
order~$2$, and $(a,b)$ are parameters for the model $E_{a,b}$ for $E$
constructed above.  Write $(2)=\q_a\q_b$ where $\q_a$,
with generator $\tau_a$, (respectively $\q_b$, with generator
$\tau_b$), is divisible only by primes dividing $a$ (respectively
$b$), and $2=\tau_a\tau_b$.  For $K\not=\Q(\sqrt{-7})$ we have
$(\tau_a,\tau_b)=(1,2)$ or~$(2,1)$ according to whether
$(\vq(a), \vq(b)) = (0, 4e_2)$ or~$(e_2, 0)$ for the
unique prime~$\q$ dividing~$2$ in the ordinary case, and also
$(\tau_a,\tau_b)=(2,1)$ in the supersingular case.

The following result completely classifies curves with odd conductor
and a point of order~$2$ in terms of the solutions to a certain
equation (\ref{eq:twotorsioncases}).

\begin{thm}
  Let $E$ be an elliptic curve defined over~$K$, with a $K$-rational
  $2$-torsion point and odd conductor. Let $D=\Disc_{min}(E)$.  Set
  \begin{enumerate}
    \item $P=\gcd(D,b,a^2-4b) = As^2$ with $A$ square-free;
    \item $B=(a^2-4b)/(\tau_a^2P)$;
    \item $C=4b/(\tau_a^2P)$;
    \item $\tilde{a}=as/(\tau_aP)$.
  \end{enumerate}
  Then
  \begin{equation}
  \tilde{a}^2A=B+C
   \label{eq:twotorsioncases}
  \end{equation}
 and $\tilde{a},A,B,C\in\Om_K$ satisfy the following conditions:
 \begin{enumerate}
 \item $\gcd(B,C)=1$ and $A$ is square-free;
 \item $A,B,C$ are only divisible by primes dividing $2D$.
 \end{enumerate}
Furthermore,
 \begin{enumerate}
 \item For each prime $\p \mid D$ with $k=\vp(D)$ and $k'=k-6$:
\[
  (\vp(A),\vp(B),\vp(C)) = (0,k,0), (0,k',0),
(0,0,\frac12k), (0,0,\frac12k')  \quad\text{or}\quad (1,0,0);
\]
\item In the ordinary case, for each prime $\q \mid 2$,
\[
  (\vq(A),\vq(B),\vq(C)) = (0,6e_2,0)
\quad\text{or}\quad (0,0,6e_2).
\]
\item In the supersingular case, for $\q \mid 2$,
$(\vq(A),\vq(B),\vq(C)) = (0,0,0)$.
 \end{enumerate}

 Conversely, given integral $A,B,C,\tilde{a}$ satisfying
 $\tilde{a}^2A=B+C$ and the above conditions, if we set
 $a=2A\tilde{a}/\gcd(2,C)$ and $b=AC/\gcd(2,C)^2$ then $E_{a,b}$ and
 its twists $E_{sa,s^2b}$ for all square-free $s\in\Om_K$
 dividing~$D$, all have good reduction outside $2D$.
\label{thm:classification2torsion}
\end{thm}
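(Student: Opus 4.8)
The plan is to prove the forward direction by a prime-by-prime valuation computation, from which integrality, the conditions on $A,B,C$, and the tabulated patterns all follow simultaneously, and then to settle the converse by direct substitution. First I would dispose of the identity~(\ref{eq:twotorsioncases}): since $P=As^2$ we have $s^2=P/A$, so that $\tilde a^2A=(a^2s^2/\tau_a^2P^2)A=a^2/(\tau_a^2P)$, whereas $B+C=((a^2-4b)+4b)/(\tau_a^2P)=a^2/(\tau_a^2P)$, and the two agree. This step uses nothing about reduction.

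The core is the determination of $(\vp(A),\vp(B),\vp(C))$ at each prime. At an odd~$\p$, write $m=\vp(b)$ and $n=\vp(a^2-4b)$; since $\Disc(E)=b^2(a^2-4b)$ differs from~$D$ only by a power of~$2$, we have $\vp(D)=2m+n=:k$, while $\vp(\tau_a)=0$, so that $\vp(B)=n-\vp(P)$, $\vp(C)=m-\vp(P)$, and $\vp(P)=\min(m,n)$. If $mn=0$ the reduction is multiplicative and one reads off $(0,k,0)$ when $m=0$ and $(0,0,k/2)$ when $n=0$. The additive case $m,n\ge1$ is where the minimality hypothesis ($\pi^2\nmid a$ or $\pi^4\nmid b$) is essential. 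Setting $\alpha=\vp(a)\ge1$, I would observe that $n=\min(2\alpha,m)$ except when $2\alpha=m$, where $n=m+t$ with a cancellation term $t\ge0$. If $\alpha\ge2$ then minimality forces $m\le3$, hence $m<2\alpha$, $n=m$, and one obtains $(1,0,0)$ for $m$ odd and $(0,0,0)$ for $m=2$; if $\alpha=1$, a short check gives $(1,0,0)$ for $m=1$, $(0,k',0)$ for $m=2$, and $(0,0,k'/2)$ for $m\ge3$, where $k'=k-6$. Every surviving configuration lies in the tabulated list, and the boundary value $k=6$ (Kodaira type $I_0^*$, the $\pi$-twist of a good curve) is exactly the case $k'=0$.

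For the primes $\q\mid2$ I would combine Corollary~\ref{Cor:ab-q-valuations} with the relations $\Disc(E)=2^8D$ (ordinary) and $\Disc(E)=\pm2^2D$ (supersingular) established earlier, using that $D$ is odd. In the ordinary case $(\vq(a),\vq(b))=(0,4e_2)$ gives $\vq(a^2-4b)=0$ and the triple $(0,0,6e_2)$, while $(\vq(a),\vq(b))=(e_2,0)$ gives $\vq(a^2-4b)=8e_2$, $\vq(\tau_a)=e_2$ and the triple $(0,6e_2,0)$; in the supersingular case $e_2=2$ and $\vq(a^2-4b)=4$ yield $(0,0,0)$. All these valuations are nonnegative, which proves $A,B,C,\tilde a$ integral; since no surviving triple has both $\vp(B)$ and $\vp(C)$ positive we get $\gcd(B,C)=1$; $A$ is squarefree by construction; and $A,B,C$ are supported on~$2D$ because at every odd $\p\nmid D$ the triple is $(0,0,0)$.

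For the converse I would substitute $a=2A\tilde a/\gcd(2,C)$ and $b=AC/\gcd(2,C)^2$ into the invariants. Using $A\tilde a^2=B+C$ gives the clean identity $a^2-4b=4AB/\gcd(2,C)^2$, so $\Disc(E_{a,b})=b^2(a^2-4b)=4A^3BC^2/\gcd(2,C)^6$; as $A,B,C$ are supported on~$2D$, the curve $E_{a,b}$ has good reduction outside~$2D$, and a squarefree twist $E_{sa,s^2b}$ with $s\mid D$ has discriminant $s^6\Disc(E_{a,b})$, again supported on~$2D$. The points needing care are the integrality of $a,b$ when $C$ is even (which follows from the $\q\mid2$ table) and of $\tilde a$ in the additive case $m<2\alpha$ (where $m\le2\alpha-1$ yields $\vp(\tilde a)=\alpha-(m+1)/2\ge0$). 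I expect the main obstacle to be precisely this additive-reduction bookkeeping at odd primes: one must show that minimality rules out every configuration outside the five patterns, and in particular that the cancellation subcase $2\alpha=m$ can occur only for $\alpha=1$.
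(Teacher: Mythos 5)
Your proposal is correct and takes essentially the same route as the paper's proof: the identity $\tilde{a}^2A=B+C$ comes from the same algebra, the valuation patterns come from the same prime-by-prime bookkeeping (the paper packages the odd-prime cases into Table~\ref{table:p-valuations} and the primes above~$2$ into Table~\ref{table:q-valuations}, invoking Corollary~\ref{Cor:ab-q-valuations} and the minimality hypothesis $\pi^2\nmid a$ or $\pi^4\nmid b$ exactly where you do), and the converse is the same computation $\Disc(E_{a,b})=4A^3BC^2/\gcd(2,C)^6$. Your reorganization around $\alpha=\vp(a)$, with the cancellation case $2\alpha=m$ forced to $\alpha=1$ and the identification of the $k=6$ boundary with $k'=0$, is just a compressed rendering of the paper's tables.
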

\begin{proof}
We have $a^2=(a^2-4b)+4b=B_0+C_0$ where $B_0=a^2-4b=\Disc(E)/b^2$ and
$C_0=4b$, and consider $\gcd(B_0,C_0)$ one prime at a time, these
primes all being divisors of $2D$.

First consider odd primes~$\p=(\pi)$.  These contribute to~$P$ if
$\p\mid B_0$ and $\p\mid C_0$, so that $\p\mid b$ and
$\p\mid a$.  In general, the contribution to~$P$ from~$\p$
is~$\pi^j$ where $j=\min\{\vp(b),\vp(a^2-4b)\}$, with
$0\le j\le3$ as in Table~\ref{table:p-valuations}, where
$k=\vp(D)$.  The entries above the line correspond to $(a,b)$
being $\p$-minimal while those below are non-minimal.  We include
$a=0$ as a possibility in each row with \lq${}\ge{}$\rq\  in the first column.
\begin{table}[h!]
  \begin{tabular}{||c|c|c||c|c||c|c|c||}
\hline
$\vp(a)$ & $\vp(b)=\vp(C_0)$ &
$\vp(a^2-4b)=\vp(B_0)$ & $j$ & $k$& $\vp(A)$ &
$\vp(B)$ & $\vp(C)$  \\
\hline
$0$ & $0$ & $\ge0$ & $0$ & $\ge0$ & $0$ & $k$ & $0$ \\
$\ge1$ & $0$ & $0$ & $0$ & $0$ & $0$ & $0$ & $0$ \\
$0$ & $\ge1$ & $0$ & $0$ & $\ge2$, even & $0$ & $0$ & $\frac12k$ \\
$\ge1$ & $1$ & $1$ & $1$ & $3$ & $1$ & $0$ & $0$ \\
\hline
$1$ & $2$ & $\ge2$ & $2$ & $\ge6$ & $0$ & $k-6$ & $0$ \\
$\ge2$ & $2$ & $2$ & $2$ & $6$ & $0$ & $0$ & $0$ \\
$1$ & $\ge3$ & $2$ & $2$ & $\ge8$, even & $0$ & $0$ & $\frac12(k-6)$ \\
$\ge2$ & $3$ & $3$ & $3$ & $9$ & $1$ & $0$ & $0$ \\
\hline
\end{tabular}
\caption{Possible parameter valuations at an odd prime}
\label{table:p-valuations}
\end{table}
Let $P$ be the product of~$\pi^j$ over all these odd prime ideals.  We
can write $P=As^2$ with $A$ and~$s$ square-free (since $j\le3$ in all
cases); $P$ is the odd part of $\gcd(a^2-4b,4b)$, which is
$\gcd(D,b,a^2-4b)$.

Now consider the prime (or primes) $\q\mid 2$, which divide $a$
or~$b$ but not both by Corollary~\ref{Cor:ab-q-valuations}.  The
possible valuations are given in table~\ref{table:q-valuations}.
Since $e_2=2$ in the supersingular case, this prime(s) contributes
$\tau_a^2$ to $\gcd(B_0,C_0)$.
\begin{table}[h!]
    \scalebox{0.9}{
\begin{tabular}{||c||c|c||c|c||c|c|c||}
\hline
Case & $\vp(a)$ & $\vp(b)$ & $\vp(B_0)=\vp(a^2-4b)$ &
       $\vp(C_0)=\vp(4b)$ & $\vp(A)$ & $\vp(B)$ &
       $\vp(C)$ \\
\hline
\multirow{2}{6em}{Ordinary} & $0$ & $4e_2$ & $0$ & $6e_2$ & $0$ & $0$ & $6e_2$ \\
& $e_2$ & $0$ & $8e_2$ & $2e_2$ & $0$ & $6e_2$ & $0$ \\
\hline
Supersingular & $\ge3$ & $0$ & $4$ & $4$ & $0$ & $0$ & $0$ \\
\hline
\end{tabular}}
\caption{Possible parameter valuations at a prime dividing 2}
\label{table:q-valuations}
\end{table}

Hence $\gcd(B_0,C_0)= \tau_a^2P$; dividing through gives
$\tilde{a}^2A=B+C$ with $B,C,\tilde{a}$ as given.  In the
factorization $P=As^2$, $A$ is the product of those odd $\pi$ for
which $j$ is odd while $s$ is the product of those for which $j\ge2$;
both are square-free divisors of~$D$.

For all primes~$\p$ dividing $2D$, the values of $\vp(A)$,
$\vp(B)$, $\vp(C)$ in the tables may easily be deduced
from the previous columns.

For the converse, it suffices to observe that with $a,b$ as defined we
have $b^2(a^2-4b)=4A^3BC^2/\gcd(2,C)^6$, whose support lies in $2D$,
and hence determines a base curve $E_{a,b}$, which has good reduction
away from~$2D$.  The same is true of twists by square-free~$s$
dividing~$D$.
\end{proof}

\begin{remark}  We can scale solutions $(A,B,C)$ to
  (\ref{eq:twotorsioncases}) by units without affecting the
  conditions, and scaling by squares of units gives isomorphic curves.
  Since $K$ has finitely many units, for each odd~$D\in\Om_K$, we can
  use Theorem~\ref{thm:classification2torsion} to compute all curves
  of discriminant $D$ with a $K$-rational two-torsion point.  This
  will be our strategy in the following subsections, where we restrict
  to the case where there is only one odd prime factor.
\end{remark}

Recall that each curve $E_{a,b}$ has a $2$-isogenous curve
$E_{-2a,a^2-4b}$, via the $2$-isogeny with kernel $(0,0)$, which has
the same conductor as~$E$.  At odd primes~$\p$ it is immediate
that $(a,b)$ is minimal at~$\p$ if and only if
$(a',b')=(-2a,a^2-4b)$ is minimal.  When $E_{a,b}$ has odd conductor
with $(a,b)$ given by our construction, at primes $\q$
dividing~$2$ we see that in the ordinary case, $(\vq(a),
\vq(b)) = (0, 4e_2)$ implies $(\vq(-2a),
\vq(a^2-4b)) = (e_2, 0)$, while conversely if $(\vq(a),
\vq(b)) = (e_2, 0)$ then $(\vq(a'), \vq(b')) =
(2e_2, 8e_2)$ and the associated minimal pair is $(a'/\tau^{2e_2},
b'/\tau^{4e_2})$ with valuations $(0,4e_2)$.  In the supersingular
case with valuations $(\ge3,0)$, a minimal pair for the isogenous
curve is $(a'/(-2), b'/4) = (a,(a/2)^2-b)$ with the same valuations
as~$(a,b)$.

\begin{prop}
  Following the notation of Theorem~\ref{thm:classification2torsion},
  if $E_{a,b}$ is a curve with odd conductor associated to a triple
  $(A,B,C)$ satisfying~(\ref{eq:twotorsioncases}), then the
  $2$-isogenous curve $E_{-2a,a^2-4b}$ has associated triple
  $(A,C,B)$.
\end{prop}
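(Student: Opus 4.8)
The plan is to compute the triple attached to the isogenous curve directly from the defining formulas of Theorem~\ref{thm:classification2torsion} and show it equals $(A,C,B)$. Writing $(a',b')=(-2a,a^2-4b)$ for the natural parameters of $E_{-2a,a^2-4b}$, the two identities $a'^2-4b'=16b$ and $4b'=4(a^2-4b)$ do the bulk of the work: they interchange the roles of $a^2-4b$ and $4b$, which are exactly the quantities producing $B$ and $C$. The real content is therefore to check that the normalising factor $\tau_a^2P$ transforms in the compensating way.

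First I would replace $(a',b')$ by the standard minimal pair $(\hat a,\hat b)=(\lambda a',\lambda^2b')$ at the primes dividing $2$, precisely as in the discussion preceding this proposition: $\lambda=1$ in the ordinary case $(\vq a,\vq b)=(0,4e_2)$, $\lambda=\tau^{-2e_2}$ in the ordinary case $(e_2,0)$, and $\lambda=-1/2$ in the supersingular case (with $K=\Q(\sqrt{-7})$ handled at each of its two primes above $2$ by the same ordinary analysis). Since each $\lambda$ is a square and is supported only on primes dividing $2$, the twist is an isomorphism, so $E_{\hat a,\hat b}$ genuinely represents $E'=E_{-2a,a^2-4b}$, and one has $\hat a^2-4\hat b=16\lambda^2b$ and $4\hat b=4\lambda^2(a^2-4b)$. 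Setting $P'=\gcd(D',\hat b,\hat a^2-4\hat b)$ and letting $\tau_{\hat a}$ be the part of $2$ dividing $\hat a$, the formulas for $B'$ and $C'$ show that both $B'=C$ and $C'=B$ reduce, up to units, to the single identity
\[
  \tau_{\hat a}^2\,P' = 4\lambda^2\,\tau_a^2\,P .
\]

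I would prove this by separating its odd and its $2$-primary parts. For the odd part, at every odd prime $\p$ one has $\vp(\hat b)=\vp(a^2-4b)$ and $\vp(\hat a^2-4\hat b)=\vp(b)$ (as $\lambda$ is coprime to $\p$), while both $\vp(D)=2\vp(b)+\vp(a^2-4b)$ and $\vp(D')=2\vp(a^2-4b)+\vp(b)$ dominate $\min(\vp(b),\vp(a^2-4b))$; hence $\vp(P)=\vp(P')=\min(\vp(b),\vp(a^2-4b))$, so $P'=P$ and consequently the square-free part $A'=A$. For the $2$-primary part I would verify $\tau_{\hat a}^2=4\lambda^2\tau_a^2$ in each of the three local cases by reading off $\tau_{\hat a}$ and $(\lambda)$ from Tables~\ref{table:p-valuations} and~\ref{table:q-valuations} and Corollary~\ref{Cor:ab-q-valuations}; the two sides come out equal to the ideals $(4)$, $(1)$ and $(4)$ respectively. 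Granting the identity, $B'=16\lambda^2b/(\tau_{\hat a}^2P')=4b/(\tau_a^2P)=C$ and likewise $C'=B$, while $A'=A$ together with $\tilde a'^2A'=B'+C'=B+C=\tilde a^2A$ forces $\tilde a'=\pm\tilde a$, so the triple attached to $E_{-2a,a^2-4b}$ is $(A,C,B)$.

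The main obstacle, and the reason the statement is not purely formal, is the behaviour at the primes dividing $2$ combined with the fact that the minimal discriminant \emph{changes} under the isogeny: $D'\neq D$ in general (for example $\vp(D')=2\vp(D)$ or $\tfrac12\vp(D)$ according as the rational $2$-torsion point reduces to a smooth point or to the node). One must therefore check that the rescaling by $\lambda$ at $2$ and the replacement of $D$ by $D'$ at the odd primes cancel each other exactly, which is what the displayed ideal identity encodes; the swap $B\leftrightarrow C$ in the tables is the concrete manifestation of this cancellation.
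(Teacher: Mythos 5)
Your proposal follows exactly the route the paper intends (its own proof is just a two-line sketch appealing to the same minimality remarks): pass to the minimal pair $(\hat a,\hat b)=(\lambda a',\lambda^2 b')$ of the isogenous curve, get $A'=A$ from the odd part, and check the $2$-adic normalisation; your reduction of both $B'=C$ and $C'=B$ to the single ideal identity $\tau_{\hat a}^2 P'=4\lambda^2\tau_a^2 P$, the computation $\vp(P)=\vp(P')=\min(\vp(b),\vp(a^2-4b))$ at odd primes, and the case check giving $(4)$, $(1)$, $(4)$ at the primes above $2$ are all correct.

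There is, however, one step that is literally false as written: the claim that \emph{each} $\lambda$ is a square. In the supersingular case $\lambda=-1/2$ is a square in $\Q(\sqrt{-2})$ (it equals $(\sqrt{-2}/2)^2$) but not in $\Q(\sqrt{-1})$, where $-1/2\equiv 2\equiv -i$ modulo squares; and this sub-case is non-vacuous over $\Q(\sqrt{-1})$, since the CM curves $y^2=x^3+bx$ of Theorem~\ref{theorem:CMGaussian} have odd conductor and are supersingular at $1+i$. Over $\Q(\sqrt{-1})$ the pair $(a,(a/2)^2-b)$ therefore represents the quadratic twist of $E'$ by $i$, not $E'$ itself, so your $E_{\hat a,\hat b}$ does not ``genuinely represent'' $E'$ there. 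The repair is short: take $\lambda=-i/2=(1+i)^{-2}$, which \emph{is} a square, giving minimal pair $(ia,\,b-(a/2)^2)$; then $\hat a^2-4\hat b=-4b$ and $4\hat b=-(a^2-4b)$, so the computed triple is $(A,-C,-B)$ rather than $(A,C,B)$. This is reconciled with the statement by the unit ambiguity inherent in the construction: replacing the chosen generator $\tau_a=2$ of $\q_a$ by $2i$ multiplies $B$ and $C$ by $-1$ (and $\tilde a$ by $-i$), so the two triples agree up to the allowed choices. You should either add this step or state the proposition, as one must anyway, up to units. (To be fair, the same imprecision sits in the paper's own remark preceding the proposition, which gives the supersingular minimal pair as $(a,(a/2)^2-b)$ for both ramified fields; that formula is exact only over $\Q(\sqrt{-2})$.)
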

\begin{proof}
  It is clear that the parameter $A$ is the same for both curves, and
  then a straightforward computation gives the result, using the
  remarks about minimality stated above.
\end{proof}

Given an odd prime ideal $\p=(\pi)$ of bad reduction, we have two
different situations depending on whether $\vp(A) =0$ or
$\vp(A)=1$. In the first case, one curve in each isogenous pair has
double the discriminant valuation of the other, and one of the two
isogenous pairs, which are $\pi$-twists of each other, has good or
multiplicative reduction at~$\p$ (when the parameter $s$ of
Theorem~\ref{thm:classification2torsion} is not divisible by~$\pi$),
while the other has additive reduction (when~ $s$ is divisible
by~$\pi$).  In the second case, all curves have additive reduction
at~$\p$, one isogenous pair (with $\pi\nmid s$) has discriminant
valuation~$3$ and the other (with $\pi\mid s$) has valuation~$9$.

Each solution to~(\ref{eq:twotorsioncases})  falls into one of
these cases, according to whether $B+C$ has even or odd valuation
at~$\p$, unless $B+C=0$, corresponding to $a=0$, which we have treated
separately.  When seeking curves with conductor a power of the odd
prime~$\p$ in subsequent subsections, we will also treat separately
solutions to~(\ref{eq:twotorsioncases}) where all of $A$, $B$ and $C$
are units at~$\p$, since in such cases we cannot recover~$\p$ from the
solution.  Such cases occur when an elliptic curve~$E$ has
conductor~$\p^2$, so has additive reduction at~$\p$, but is a
quadratic twist of a curve~$E_0$ with good reduction at~$\p$.  Here we
must consider twists of~$E_0$ by \emph{all} odd primes to see which
have good reduction above~$2$.

In the next three subsections we will determine all elliptic curves
with odd prime power conductor~$\p^r$, treating first this ``good
twist'' case where $E$ has a twist with good reduction at~$\p$ (this
includes the case $a=0$), then the ``additive twist'' cases where all
twists have additive reduction at~$\p$ (here $A=\pi$) and lastly the
``multiplicative twist'' case where $E$ has a twist with
multiplicative reduction at~$\p$ (here $A=1$).

In each case we determine which $(B,C)$ pairs give an appropriate
solution to (\ref{eq:twotorsioncases}), thus obtaining a ``base
curve'' $E_{a,b}$, and then determine which of its twists have good
reduction at primes dividing~$2$.  
\subsection{Curves with odd prime power conductor: the good twist case}
In this subsection we determine all elliptic curves defined over one
of the fields~$K$ with $K$-rational $2$-torsion and conductor a power
of an odd prime~$\p$, such that a quadratic twist of~$E$ by a
generator~$\pi$ of~$\p$ has good reduction at~$\p$.  In fact all such
curves have CM by an order in~$K$ and have been fully described previously.

\begin{thm}\label{thm:goodtwist}
  Let $K$ be an imaginary quadratic field with class number $1$, and
  let $\p$ be an odd prime of~$K$. Let $E$ be an elliptic curve
  defined over~$K$, with a $K$-rational point of order~$2$ of
  conductor a power of~$\p$.  If $E$ has a quadratic twist with good
  reduction at~$\p$ then $E$ belongs to one of the following complex
  multiplication families as studied in Section~\ref{section CM}:
  \begin{enumerate}
  \item $K=\Q(\sqrt{-7})$ and~$E$ is a twist of the base-change to~$K$
    of one of the curves in the isogeny class \lmfdbeciso{49}{a} over $\Q$,
    with conductor $\p^2=(\pi)^2$ where $\pi\equiv 1\pmod4$; or
  \item $K=\Q(\sqrt{-1})$ and~$E$ is a twist of the base-change to~$K$
    of one of the curves in the isogeny class \lmfdbeciso{64}{a} over $\Q$.
    $E$ has equation $y^2=x(x^2+b)$ and conductor~$\p^2=(\pi)^2$,
    where $b\equiv-1\pm2i\pmod8$ with~$b=\pi$ or $b=\pi^3$; or
  \item $K=\Q(\sqrt{-2})$ and $E$ is a twist of the base-change to $K$
    of one of the curves in the isogeny class \lmfdbeciso{256}{a} over $\Q$,
    with conductor $\p^2=(\pi)^2$ where $\pi \equiv \pm 1 +\sqrt{-2}
    \pmod 4$, for $(a,b)=(2\pi\sqrt{-2},-\pi^2)$.
  \end{enumerate}
In each case, the elliptic curves have CM by an order in the field of
definition~$K$: their $j$-invariants are either $-15^3$ or $255^3$ in
the first case, either $12^3$ or $66^3$ in the second, and in the
third they have $j=20^3$ and CM by $\Z[\sqrt{-2}]$.
\end{thm}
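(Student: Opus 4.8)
The plan is to use Theorem~\ref{thm:classification2torsion} to translate the good-twist hypothesis into a unit equation supported only above~$2$, to solve that equation in each of the nine fields, and to recognise the surviving curves inside the CM families of Section~\ref{section CM}. Concretely, as explained before the statement, $E$ admits a quadratic twist with good reduction at~$\p$ precisely when the triple $(A,B,C)$ attached to~$E$ by Theorem~\ref{thm:classification2torsion} consists of three units at~$\p$. Since the conductor is a power of the single odd prime~$\p$, condition~(2) of that theorem confines $A,B,C$ to the primes dividing~$2\p$; being units at~$\p$ they are supported only above~$2$, and Table~\ref{table:q-valuations} gives $\vq(A)=0$ for every $\q\mid 2$. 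Hence $A\in\Om_K^*$ is a global unit, the discriminant $b^2(a^2-4b)$ of the base curve $E_0=E_{a,b}$ is supported only above~$2$, so $E_0$ has good reduction outside~$2$, and Equation~\eqref{eq:twotorsioncases} becomes
\[
  \tilde a^2 A = B+C,\qquad A\in\Om_K^*,\quad \gcd(B,C)=1,
\]
with $B,C$ supported only above~$2$ and $\q$-valuations read off from Table~\ref{table:q-valuations}.

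Next I would solve this equation field by field. In the ordinary rows one of $B,C$ is a unit while the other has $\q$-valuation $6e_2$, so $B+C$ is a unit times a square and $\tilde a$ is forced to be a unit; the equation then reduces to a congruence $A\tilde a^2\equiv C\pmod{\q^{6e_2}}$ between units. For the six fields in which $2$ is inert ($d\in\{3,11,19,43,67,163\}$) only the ordinary case can occur, and one checks directly that no difference of two units of~$\Om_K$ is divisible by $\q^6$; hence there are no good-twist curves over these fields. This leaves $d=7,1,2$. For $d=7$, where $2=\q_1\q_2$ splits, coprimality of $B,C$ allows the two factors to be distributed between the two primes, and enumerating the finitely many admissible $(B,C)$ up to units yields exactly the curves of case~(1), which one identifies as twists of the base-change of \lmfdbeciso{49}{a}. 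For $d=1,2$, where $2$ ramifies, the supersingular row of Table~\ref{table:q-valuations} also contributes; solving produces the curves with $a=0$ for $d=1$ (case~(2)) and with $(a,b)=(2\pi\sqrt{-2},-\pi^2)$ for $d=2$ (case~(3)).

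Finally I would pin down the admissible twisting primes and confirm the CM assertion. For each base curve $E_0$ found above, the curve of conductor~$\p^2$ is a twist $E_0^{\pi}$ by a generator~$\pi$ of an odd prime; I would apply Lemma~\ref{lemma:ramificationat2} (and, in the $d=1$ case where the relevant twists are quartic, Lemma~\ref{lemma:quartictwist}) to determine exactly when $E_0^{\pi}$ retains good reduction at the primes above~$2$. This produces the congruence conditions on~$\pi$ modulo~$4$ (respectively modulo~$8$) recorded in the three cases; these coincide with the conditions appearing in Theorems~\ref{theorem:CMnorootsofunity} and~\ref{theorem:CMGaussian}, so the surviving curves are precisely the CM curves described there, with the stated $j$-invariants and complex multiplication by an order in~$K$.

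The step I expect to be the main obstacle is the field-by-field solution of $\tilde a^2 A=B+C$: one must enumerate all coprime pairs $(B,C)$ with the valuations dictated by Tables~\ref{table:p-valuations} and~\ref{table:q-valuations}, handle the genuinely different split ($d=7$) and ramified supersingular ($d=1,2$) regimes, rule out spurious solutions, and verify that every solution actually arises from one of the CM curves rather than from a hypothetical non-CM curve with good reduction outside~$2$.
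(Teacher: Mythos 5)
Your overall strategy --- applying Theorem~\ref{thm:classification2torsion}, noting that the good-twist hypothesis forces $A,B,C$ to be units at~$\p$ and hence supported only above~$2$, enumerating the finitely many admissible $(B,C)$, and identifying the survivors with the CM families --- is exactly the paper's route. But there is a genuine error at the heart of your middle step: the claim that in the ordinary case ``$\tilde{a}$ is forced to be a unit'', so that equation~(\ref{eq:twotorsioncases}) reduces to a congruence between units. What the valuations actually force is only that $\tilde{a}$ is a unit \emph{at the primes above~$2$}, i.e.\ that $\tilde{a}$ is odd: since exactly one of $B,C$ is divisible by each $\q\mid 2$ in the ordinary case, the sum $B+C=\tilde{a}^2A$ is odd, but it can perfectly well be divisible by odd primes (which then divide $\tilde{a}$ while dividing neither $B$ nor $C$). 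The correct criterion is that $B+C$ must be zero or a unit times the square of an \emph{arbitrary} element of~$\Om_K$, and the genuine solutions are precisely of this non-unit kind: over $\Q(\sqrt{-7})$ one has $(B,C)=(-64,1)$ with $B+C=-63=(3\sqrt{-7})^2$, so $\tilde{a}=3\sqrt{-7}$ and $(a,b)=(6\sqrt{-7},1)$; and in the split-distributed case $(B,C)=(\tau^6,\overline{\tau}^6)$ with $B+C=9=3^2$, so $\tilde{a}=3$. These are exactly the curves of family~(1). Your rule would discard both, since $3\sqrt{-7}$ and $3$ are not units; so the method you describe, applied uniformly, shows the ordinary case is empty over \emph{every} field, contradicting the conclusion you assert for $d=7$. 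The proposal is thus internally inconsistent: your $d=7$ enumeration can ``yield exactly the curves of case~(1)'' only if the unit constraint on $\tilde{a}$ is dropped.

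The same error makes your argument for the six inert fields formally invalid, even though its conclusion happens to be correct. ``No difference of two units is divisible by $\q^6$'' addresses the equation $A\tilde{a}^2-C=64\eta$ only under the false assumption that $\tilde{a}$ is a unit. What must actually be checked is that $64\eta+1$ is never a unit times a square over those fields: for the fields with units $\pm1$ this means checking that $\pm65$ and $\pm63$ are not squares in~$K$ (true, since $5$, $13$, $7$ do not ramify there --- note that $-63=3^2\cdot(-7)$ \emph{is} a square exactly when $7$ ramifies, which is the $d=7$ solution again), and over $\Q(\sqrt{-3})$ one must additionally rule out $64\eta+1$ for the extra units~$\eta$. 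Once this step is repaired --- replace ``$\tilde{a}$ is a unit'' by ``$B+C$ is zero or a unit times a square'' and test the finitely many values $\pm65$, $\pm63$ (and unit variants), $\pm\tau^6\pm\overline{\tau}^6\in\{\pm9,\pm5\sqrt{-7}\}$, and $0,\pm2$ in the supersingular case --- your outline coincides with the paper's proof.
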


\begin{remark}
There are four curves in the isogeny class \lmfdbeciso{49}{a} over $\Q$, linked
by $2$-{} and $7$-isogenies and in two pairs of $-7$-twists, so that
over $\Q(\sqrt{-7})$ they become isomorphic in pairs.  The first two
are \lmfdbec{49}{a}{1}, which has parameters $(a,b)=(21,112)$, and \lmfdbec{49}{a}{2}, with
$(a,b)=(-42,-7)$; the other two are their $-7$-twists and are
$7$-isogenous to these.
\end{remark}

\begin{proof}
  By Theorems \ref{theorem:CMnorootsofunity} and
  \ref{theorem:CMGaussian} we know that the three families of curves
  do have good reduction away from $\id{p}$, hence we are led to prove
  that these are the unique ones.

  In the notation of Table~\ref{table:p-valuations} such curves have
  discriminant valuation~$k=6$ so come from solutions with
  $\p$-valuations given in the first two lines of the second half of
  the table.  Hence $B$ and $C$ are not divisible by~$\pi$, and from
  Table~\ref{table:q-valuations} they have valuation~$6e_2$ or~$0$ at
  the prime(s) above~$2$ in the ordinary case, while in the
  supersingular case they are units.  Up to scaling by units, and
  interchanging~$B$ and~$C$ (corresponding to applying a $2$-isogeny),
  we reduce to considering the following finite number of
  possibilities for~$(B,C)$:
  \begin{enumerate}
    \item over all fields, $(B,C) = (64\eta,1)$ with~$\eta\in\Om_K^*$;
    \item over $\Q(\sqrt{-7})$ where $2=\tau\overline{\tau}$ with
      $\tau=\frac{1+\sqrt{-7}}{2}$, $(B,C) =
      (\pm\tau^6,\overline{\tau}^6)$;
    \item over $\Q(\sqrt{-1})$ and $\Q(\sqrt{-2})$, $(B,C)=(\pm1,1)$
      (the supersingular case).
  \end{enumerate}
  For a solution we require $B+C$ to be either~$0$ or a non-zero
  square times a unit.

  Case (1) yields no solutions with $\eta=1$ since $B+C=65=5\cdot13$
  is not a square since neither $5$ nor~$13$ ramifies in any of the
  fields.  Taking $\eta=-1$ in (1) gives $B+C=-63=3^2\cdot7$, which is
  valid when $7$ is ramified, and leads to the base curves~$E=E_{a,b}$
  with $(a,b)=(6\sqrt{-7},1)$ (and its Galois conjugate). Such curves
  lie in the first family.

  A simple check shows that none of the additional units in
  $\Q(\sqrt{-1})$ or $\Q(\sqrt{-3})$ gives a value of $B+C$ of the
  required form.

  In case (2) we have $B+C = \pm\tau^6\pm\overline{\tau}^6 \in
  \{\pm9,\pm5\sqrt{-7}\}$, giving a potential solution with $A=\pm6$
  and~$b=\overline{\tau}^6$ (or its Galois conjugate).  Taking
  $(a,b)=(6,\tau^6)$ we find a twist of~\lmfdbec{49}{a}{1} and hence no curves
  not already encountered.

  In case (3) with $B+C=0$ we obtain curves with $a=0$.  All such
  curves have CM by $\Z[\sqrt{-1}]$, hence we get curves in the second
  family from Theorem~\ref{theorem:CMGaussian}.

  In case (3) with $B+C=2$ we obtain a solution when $2$ is ramified,
  with base curve~$E=E_{a,b}$ where $(a,b)=(2(1+i),i)$
  or~$(2\sqrt{-2},-1)$. Both cases are isomorphic to the curve \lmfdbec{256}{a}{1}
  with CM by $\ZZ[\sqrt{-2}]$. Then we get the third family from
  Theorem~\ref{theorem:CMEisenstein} and
  Corollary~\ref{coro:differentfields}.
\end{proof}

\subsection{Curves with odd prime power conductor: the additive twist case}

We continue to consider elliptic curves~$E$ whose conductor is a power
of the odd prime $\p=(\pi)$, using
Theorem~\ref{thm:classification2torsion} to find all such curves by
considering solutions to the parametrizing
equation~(\ref{eq:twotorsioncases}).

In this subsection, we consider the ``additive twist'' case in which
the parameter $A$ is divisible by~$\pi$ so that the curves and their
twists by~$\pi$ and by units all have additive reduction at~$\p$.  The
discriminant valuations are~$3$ or~$9$.  We find that the only such
curves are again the base changes of CM elliptic curves over $\Q$ with
conductor~$49$, but unlike the previous subsection, $K$ must be one of
the six fields in which $7$ is inert.  This corresponds to looking at
elliptic curves with CM by an order in $K$ over a field $L \neq K$, which
furthermore have a $2$-torsion point and odd prime power conductor. By
the results of Section~\ref{section CM} (specifically
Corollary~\ref{coro:differentfields}), we have to restrict to odd values
of $d$, and the unique curve with a $2$-torsion point corresponds to
$d=7$.

\begin{thm} Let $K$ be an imaginary quadratic field with class number
  $1$, and let $\p$ be an odd prime of~$K$. Let $E$ be an elliptic
  curve defined over~$K$, with a $K$-rational point of order~$2$ and
  conductor  a power of~$\p$, such that no quadratic twist of~$E$
  has good or multiplicative reduction at~$\p$.  Then
  \begin{enumerate}
  \item $K=\Q(\sqrt{-d})$ for $d=1,2,11,43,67,163$, $E$ has
    conductor~$\p^2$ where $\p=(7)$, and~$E$ is a base-change to~$K$
    of one of the curves in the isogeny class \lmfdbeciso{49}{a} over $\Q$.
  \end{enumerate}
\label{thm:additivetwist}
\end{thm}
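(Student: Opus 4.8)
The plan is to apply the classification of Theorem~\ref{thm:classification2torsion} directly, since by hypothesis no quadratic twist of $E$ has good or multiplicative reduction at $\p$. In the notation of that theorem this is precisely the condition $\vp(A)=1$: the rows of Table~\ref{table:p-valuations} with $\vp(A)=1$ are exactly the two with $j$ odd, namely $j=1$ (discriminant valuation $k=3$) and $j=3$ ($k=9$), and in both $\vp(B)=\vp(C)=0$. Because $A$ is square-free, is a unit at every prime $\q\mid 2$, and is supported only on the primes dividing $2D$ with $D=\Disc_{\min}(E)$ a power of $\pi$, we get $A=u\pi$ for a unit $u$ and a generator $\pi$ of $\p$, so the curve has additive reduction at $\p$ with conductor $\p^2$. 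Moreover $B$ and $C$, being coprime to $\pi$ and supported only on primes dividing $2D$, are supported only on the primes above $2$.

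First I would enumerate the admissible pairs $(B,C)$, up to scaling the triple by units and up to interchanging $B\leftrightarrow C$ (which corresponds to replacing $E$ by its $2$-isogenous curve $E_{-2a,a^2-4b}$, whose triple is $(A,C,B)$). By Table~\ref{table:q-valuations}, in the supersingular case — possible only for $d=1,2$ — both $B$ and $C$ are global units, while in the ordinary case one of them is a global unit and the other is a unit times $\tau^{6e_2}$, where $\tau$ generates $\q\mid 2$. For the eight fields with a single prime above $2$ this yields $(B,C)=(64\eta,1)$ with $\eta\in\Om_K^*$ (using $\tau^{6e_2}=\pm 64$ in the ramified cases $d=1,2$), together with $(B,C)=(\eta,1)$ in the supersingular case. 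For $K=\Q(\sqrt{-7})$, where $2=\tau\overline{\tau}$ splits, each of the two primes above $2$ may contribute its valuation to $B$ or to $C$, giving in addition $(B,C)=(\pm\tau^6,\overline{\tau}^6)$.

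For each candidate I would then impose \eqref{eq:twotorsioncases}, which here reads $N:=B+C=\tilde a^2 A=\tilde a^2 u\pi$; thus $N$ must be a unit times a square times a single prime (equivalently $(N)$ is a prime ideal times a square ideal), and $\p$ is read off as the square-free odd part of $(N)$. The supersingular candidates give $N=\eta+1$, which is $0$ (the case $a=0$, a CM curve already treated in Theorem~\ref{thm:goodtwist}) or is supported only above $2$, hence yields no odd $\p$. The candidate $N=65=5\cdot 13$ is never admissible, as $5$ and $13$ are unramified and distinct in all nine fields. The candidate $N=-63=-3^2\cdot 7$ is admissible exactly when $(7)$ is prime: if $7$ is inert then $\tilde a=3$, $A=\pm 7$ and $\p=(7)$; if $7$ ramifies (only $d=7$) then $N=(3\sqrt{-7})^2$ is a square, forcing $\vp(A)=0$ against the additive hypothesis; and if $7$ splits ($d=3,19$) then $N$ has two distinct odd prime factors, impossible for conductor a power of one prime. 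The extra $\Q(\sqrt{-7})$ candidates give $N\in\{\pm 9,\pm 5\sqrt{-7}\}$, where $\pm 9$ is a square and $\pm 5\sqrt{-7}$ is again a product of two distinct primes; and the extra-unit candidates over $\Q(\sqrt{-1})$ and $\Q(\sqrt{-3})$ give values of $N$ (such as $1\pm 64i$) whose norm is composite and not of the form prime times square. Hence an admissible solution exists precisely when $7$ is inert, i.e. for $d\in\{1,2,11,43,67,163\}$, with $\p=(7)$ of norm $49$.

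Finally I would read off the curve. The surviving solution, $(B,C)=(-64,1)$ up to units and the $2$-isogeny swap, gives through the recipe of Theorem~\ref{thm:classification2torsion} the base curve $E_{a,b}$ with $(a,b)=(-42,-7)$, which has rational coefficients and is the base-change of \lmfdbec{49}{a}{2}. Since $(a,b)\in\Z^2$, and the admissible twists by $s\in\{1,\varepsilon,\pi,\varepsilon\pi\}$ retaining odd conductor, together with the $2$-isogeny, all preserve rationality, every curve produced is the base-change to $K$ of a curve in the isogeny class \lmfdbeciso{49}{a}; this is consistent with Section~\ref{section CM} and Corollary~\ref{coro:differentfields}, as these are precisely the curves with CM by an order of $\Q(\sqrt{-7})$, now over a field $K$ in which $7$ is inert. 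I expect the main obstacle to be making the enumeration of $(B,C)$ genuinely exhaustive across the different splitting behaviours of $2$ — especially the split case $K=\Q(\sqrt{-7})$ with its two primes above $2$ and ramified $7$ — together with the extra units in $\Q(\sqrt{-1})$ and $\Q(\sqrt{-3})$; but each resulting candidate reduces to the elementary check of whether a fixed small element of $\Om_K$ is a unit times a prime times a square.
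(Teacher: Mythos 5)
Your proposal is correct and takes essentially the same route as the paper's own proof: both reduce via Theorem~\ref{thm:classification2torsion} to the same three families of pairs $(B,C)$ (namely $(64\eta,1)$ over all fields, $(\pm\tau^6,\overline{\tau}^6)$ over $\Q(\sqrt{-7})$, and the supersingular $(\eta,1)$ case), impose that $B+C$ be nonzero with odd valuation at exactly one odd prime, and find that only $B+C=-63$ with $7$ inert survives, giving the base curve $(a,b)=(-42,-7)$, i.e.\ \lmfdbec{49}{a}{2}, and hence the isogeny class \lmfdbeciso{49}{a}. Your extra details (why the split and ramified behaviours of $7$ fail, the norm computations for the additional units of $\Q(\sqrt{-1})$ and $\Q(\sqrt{-3})$) simply make explicit what the paper dismisses as ``a simple check.''
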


\begin{proof}
  Inspecting Table~\ref{table:p-valuations}, we are led to the same
  set of pairs $(B,C)$ as considered in the proof of
  Theorem~\ref{thm:goodtwist}, except that now we require $B+C$ to be
  nonzero, with odd valuation at exactly one odd prime~$\p$.  We use
  the same numbering of cases as before and recall that these are all
  possibilities, up to scaling by units and switching $B$ and~$C$.

  Case (1), where $B+C\in\{\pm63,\pm65\}$ again yields no solutions
  with $B+C=65=5\cdot13$ since neither $5$ nor~$13$ ramifies in any of
  the fields.  However, $B+C=-63=3^2\cdot7$ is valid when $7$ is inert
  in~$K$.  This gives the base curve with $(a,b)=(-42,-7)$, which is
  the elliptic curve defined over~$\Q$ with label \lmfdbec{49}{a}{2}.  Note
  that this curve also appeared in the good twist case
  over~$\Q(\sqrt{-7})$, but here we require $7$ to be inert.  The
  quadratic twist by~$-7$ (with label~\lmfdbec{49}{a}{4}) also has good
  reduction away from~$7$, so all four curves in the isogeny
  class~\lmfdbeciso{49}{a} have conductor~$(7)^2$ over the fields listed.

  A simple check shows that none of the additional units in
  $\Q(\sqrt{-1})$ or $\Q(\sqrt{-3})$ gives a value of $B+C$ of the
  required form.

  In case (2) we have $B+C = \pm\tau^6\pm\overline{\tau}^6 \in
  \{\pm9,\pm5\sqrt{-7}\}$.  Since $5$ is inert this gives no
  solutions.

  Case (3), with $B+C\in\{0,\pm2\}$, also provides no solutions.
\end{proof}

\subsection{Curves with odd prime power conductor: the multiplicative
  twist case} We now consider curves of odd prime power
conductor~$\p^r$ which in our parametrization have $A=1$, such that
the base curve $E_{a,b}$ (with $\p$-minimal $(a,b)$) has
multiplicative reduction at~$\p$.

The main result of this subsection is that these elliptic curves are
of two types, up to quadratic twist by a generator of~$\p$:
\begin{enumerate}
\item one of a finite number of ``sporadic'' curves, with conductor
  either a prime dividing~$17$ (over all fields where~$17$ does not
  split), or a prime of norm~$257$ over $\Q(\sqrt{-1})$ only, or a
  prime of norm~$241$ over $\Q(\sqrt{-3})$ only;
\item one of a family analogous to the Setzer-Neumann family
  over~$\Q$.
\end{enumerate}

The sporadic curves are all given by a more general construction which
we discuss first.  Over any number field~$K$ let $u\in
K\setminus\{0,-16\}$ and define $E_u = E_{-(u+32)/4,u+16}$, an
elliptic curve with invariants $c_4=u^2+16u+256$,
$c_6=(u-16)(u+8)(u+32)$ and discriminant $\Delta_u=u^2(u+16)^2$.

\begin{lemma}
$E_u$ has full $2$-torsion over~$K$; the three curves $2$-isogenous
  to~$E$ are isomorphic to $E_{a,b}$ for
  $(a,b)=(2(u-16),u^2+32u+256)$, $(2(u+32),u^2)$ and $(u+8,16)$, with
  discriminants $-u(u+16)^4$, $u^4(u+16)$, and $u(u+16)$ respectively.
\end{lemma}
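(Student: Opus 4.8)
The plan is to treat this as a direct computation on the two-parameter family $E_{a,b}\colon y^2=x(x^2+ax+b)$, starting from the identification $E_u=E_{a,b}$ with $a=-(u+32)/4$ and $b=u+16$. First I would confirm the invariants recorded in the preamble by substituting this $(a,b)$ into the formulas $c_4=2^4(a^2-3b)$, $c_6=2^5a(9b-2a^2)$ and $\Delta=2^4b^2(a^2-4b)$ already established for $E_{a,b}$; in particular one recovers $\Delta_u=u^2(u+16)^2$.

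For the full $2$-torsion claim, note that $(0,0)$ is a $2$-torsion point on every $E_{a,b}$, so it suffices to show that the quadratic $x^2+ax+b$ splits over $K$, i.e. that $a^2-4b$ is a square. I would compute $a^2-4b=(u+32)^2/16-4(u+16)=u^2/16=(u/4)^2$, which is a square in $K$; hence all three $2$-torsion points are $K$-rational, and solving the quadratic gives the nonzero $2$-torsion $x$-coordinates $x=4$ and $x=(u+16)/4$.

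For the three $2$-isogenies, my approach is to quotient by each of the three order-two subgroups in turn. Since the paper already records that $E_{a,b}$ has $2$-isogenous curve $E_{-2a,\,a^2-4b}$ via the subgroup generated by $(0,0)$, I would reduce the other two cases to this one by first translating the chosen $2$-torsion point to the origin through $x\mapsto x+x_0$ (an isomorphism, so $\Delta$ is unchanged and the model stays of the form $E_{a',b'}$), and then applying the recalled formula. Carrying this out for $x_0=(u+16)/4$, $x_0=0$ and $x_0=4$ produces three image curves; after a final admissible rescaling $x\mapsto\lambda^2x$, $y\mapsto\lambda^3y$ to clear the denominators (powers of $2$) introduced by the translation, these take the stated shapes $E_{2(u-16),\,u^2+32u+256}$, $E_{2(u+32),\,u^2}$ and $E_{u+8,16}$, using $u^2+32u+256=(u+16)^2$. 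The discriminants are then read off either by substituting each $(a,b)$ into $\Delta=2^4b^2(a^2-4b)$ (with $a^2-4b=-256u$, $256(u+16)$ and $u(u+16)$ respectively) or, more cleanly, by noting that the isogeny sends the translated discriminant $\Delta_u$ to $256\,b'(a'^2-4b')^2$; both routes yield $-u(u+16)^4$, $u^4(u+16)$ and $u(u+16)$ up to the twelfth power of $2$ that is absorbed by the rescaling.

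The computation is entirely routine, so the only real obstacle is bookkeeping: the translations to the origin introduce denominators, and one must rescale consistently so that all three images are presented as comparable integral models and all three discriminants are normalised by the same power of $2$. I would organise the three cases in parallel and track the scaling factor $\lambda$ in each, which removes any ambiguity; no conceptual input beyond the recalled $2$-isogeny formula and the square-root computation of Step~2 is needed.
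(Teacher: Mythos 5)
Your plan is exactly the computation the paper has in mind: its entire proof of this lemma is the single line ``Elementary: note that $\Delta_u$ is a square'', so fleshing it out as you do is the intended argument. Your key steps are right: $a^2-4b=\left(\tfrac{u+32}{4}\right)^2-4(u+16)=(u/4)^2$ gives full $2$-torsion with nonzero $2$-torsion abscissae $4$ and $(u+16)/4$, translating each to the origin and applying $E_{a,b}\mapsto E_{-2a,a^2-4b}$ produces the three quotients, and your discriminant bookkeeping (the factor $2^{12}$ against the paper's normalisation) is correct.

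However, carrying the computation out does not yield the three stated pairs on the nose, and your write-up asserts that it does. Two things go wrong. First, your kernel assignment is swapped: $x_0=4$ leads (after scaling by $\lambda=2$) to $E_{2(u-16),(u+16)^2}$, and $x_0=(u+16)/4$ leads to the last curve, not the other way around. Second, and more substantively, for $x_0=(u+16)/4$ the translated model is $E_{(u+8)/2,\,u(u+16)/16}$, and the isogeny formula gives $E_{-(u+8),\,16}$; no admissible rescaling $(a,b)\mapsto(\lambda^2a,\lambda^4b)$ keeping $b=16$ can remove the minus sign unless $\lambda^2=-1$, so $E_{-(u+8),16}$ is the quadratic twist of $E_{u+8,16}$ by $-1$ and is isomorphic to it over $K$ only when $\sqrt{-1}\in K$, i.e.\ only over $\Q(\sqrt{-1})$ among the nine fields. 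A concrete check with $u=1$ over $\Q$: $E_{9,16}$ has minimal invariants $(c_4,c_6)=(33,-81)$ and good reduction at $2$ (conductor $17$), while $E_{-9,16}$ has $(33,81)$, which fails the criterion of Proposition~\ref{prop:Kraus} at $2$; since $E_1$ itself has additive reduction at $2$ (as $-1$ is not a square modulo $4$) and isogenous curves have equal conductors, the curve $2$-isogenous to $E_1$ is $E_{-9,16}$, not $E_{9,16}$. In other words, the sign slip sits in the lemma's own statement --- harmlessly for the paper, since both twists have the same discriminant $2^8u(u+16)$, and the curve enters the proof of Theorem~\ref{thm:multiplicativecase} precisely as $E_{-(u+8),16}$ --- but a proof that claims to confirm $(u+8,16)$ as written would either repeat the slip or stall at this step. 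The correct conclusion of your computation is that the third isogenous curve is $E_{-(u+8),16}$, i.e.\ it agrees with $E_{u+8,16}$ only up to quadratic twist by $-1$.
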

\begin{proof}
Elementary: note that $\Delta_u$ is a square.
\end{proof}

In fact the family of curves~$E_u$ is the universal family of elliptic
curves with full $2$-torsion over~$K$, as it is easy to check that
$E_u$ has Legendre parameter $\lambda=(u+16)/u$.  Our reason for
writing the family this way is that if we specialize the parameter $u$
to a unit with certain properties, then we obtain elliptic curves with
square-free odd conductor.

\begin{prop}\label{prop:sporadic-family}
Let $K$ be any number field and $u\in\Om_K^*$.  The quadratic twist
$E_u^{(-u)}$ of $E_u$ by $-u$, together with its three $2$-isogenous
curves, is semistable with bad reduction only at primes dividing
$u+16$.  The same is true of $E_u$ itself if $-u$ is congruent to a
square modulo~$4$.
\end{prop}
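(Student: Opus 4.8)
The plan is to reduce everything to a local computation at the primes $\q\mid2$ via Kraus's criterion (Proposition~\ref{prop:Kraus}), after first disposing of the two-isogenous curves for free. Semistability and the set of primes of bad reduction are both isogeny invariants, since they are read off from the conductor, which is unchanged under isogeny. Hence it suffices to prove that the single curve $E_u^{(-u)}$ is semistable with bad reduction only at primes dividing $u+16$; the three curves $2$-isogenous to it (which are the $(-u)$-twists of the curves produced in the preceding lemma) then automatically inherit both properties.

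For the quadratic twist by $-u$, the invariants scale by $(c_4,c_6,\Delta)\mapsto(u^2c_4,-u^3c_6,u^6\Delta)$, giving $c_4'=u^2(u^2+16u+256)$, $c_6'=-u^3(u-16)(u+8)(u+32)$ and $\Delta'=u^8(u+16)^2$. First I would note that, since $u\in\Om_K^*$ is coprime to $2$, the element $u+16$ is odd and $\Delta'$ is prime to $2$. At a prime $\q\mid2$ one has $\vq(u^2)=0$, $\vq(16u)=4e_2$ and $\vq(256)=8e_2$, so $\vq(c_4')=0$ and we land in case~(1) of Proposition~\ref{prop:Kraus}. The key point is that modulo $4$ the factors $u-16$, $u+8$, $u+32$ all reduce to $u$ (as $8,16,32\equiv0\pmod4$), whence $-c_6'\equiv u^6=(u^3)^2\pmod4$; taking $a_1=u^3$ verifies the required congruence $a_1^2\equiv-c_6'\pmod4$ automatically. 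Thus an integral model with invariants $(c_4',c_6')$ exists over $K_\q$, and since its discriminant has $\vq(\Delta')=0$ the model has good reduction at $\q$.

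Next I would handle the odd primes. The only odd primes dividing $\Delta'=u^8(u+16)^2$ are those dividing $u+16$, and at such a prime $\p$ one has $u\equiv-16$, so $c_4'\equiv u^2\cdot256\pmod\p$ is a unit, i.e.\ $\vp(c_4')=0$ while $\vp(\Delta')>0$: the reduction is multiplicative. This shows that $E_u^{(-u)}$ is semistable with bad reduction exactly at the primes dividing $u+16$, completing the twisted case.

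Finally, for $E_u$ itself the computation at $\q\mid2$ is identical except that case~(1) of Proposition~\ref{prop:Kraus} now requires $a_1^2\equiv-c_6\equiv-u^3\pmod4$; since $u^2$ is a unit square modulo~$4$, this congruence is solvable precisely when $-u$ is a square modulo~$4$, which is exactly the stated hypothesis. Granting it, the identical discriminant and $c_4$ computations yield good reduction at every $\q\mid2$ and multiplicative reduction at the odd primes dividing $u+16$. I expect the only delicate step to be the bookkeeping in Kraus's criterion: checking that $\vq(c_4)=0$ uniformly places us in the first case across all the fields, and recognising that the twist by $-u$ is precisely what converts the obstruction $-c_6\equiv-u^3$ into the perfect square $-c_6'\equiv u^6$.
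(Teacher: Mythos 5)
Your proof is correct and takes essentially the same route as the paper's: Kraus's criterion in its first case (since $\vq(c_4)=0$) at the primes above $2$, exploiting the congruence $c_6\equiv u^3\pmod 4$ so that twisting by $-u$ turns the condition into the unconditional square $-c_6'\equiv u^6\pmod 4$, together with the coprimality of $c_4$ and $\Delta$ at odd primes to get multiplicative reduction exactly at the primes dividing $u+16$. The only cosmetic difference is that you spell out the reduction to the single curve $E_u^{(-u)}$ via isogeny-invariance of the conductor, which the paper leaves implicit.
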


\begin{proof}
From the invariants given above we see that $\Delta_u$ is only
divisible by primes dividing $u+16$ which is odd, and that $\Delta_u$
is coprime to $c_4$ so the reduction is multiplicative at all bad
primes.  Also since $c_4$ and $c_6$ are odd the condition that $c_4$
and $c_6$ are the invariants of an integral model, which then has good
reduction at primes dividing~$2$, is that $-c_6$ is a square
modulo~$4$, which is the case when $-u$ is a square modulo~$4$ since
$c_6\equiv u^3\pmod4$.  Twisting by $-u$ gives a curve whose
$c_6\equiv-u^6\pmod4$ which satisfies Kraus's condition
unconditionally.
\end{proof}

For example, over $\Q$ we take $u=1$ and find that $E_{1}^{(-1)}$ is
the elliptic curve \lmfdbec{17}{a}{2} of conductor~$17$, with $2$-isogenous curves
\lmfdbec{17}{a}{1}, \lmfdbec{17}{a}{3} and~\lmfdbec{17}{a}{4}. Since
$17 \equiv 1 \pmod 4$, its quadratic
twists by $17$ also have good reduction at $2$. Taking $u=-1$ gives
curves of conductor $15$, which are not relevant for us.

More generally we consider the curves given by this proposition over
imaginary quadratic fields, for units $u$ such that $u+16$ is a prime
power, so that we obtain curves of prime conductor.  When $\pm1$ are
the only units, the only case is the one just considered with $u=1$,
leading to curves whose conductors are divisible only by the primes
above~$17$, which are primes except when $17$ splits in~$K$. Since
$17 \equiv 1 \pmod 4$, the quadratic twists of such curves also have
good reduction at $2$.

Over $K=\Q(\sqrt{-1})$ we can also take $u=\pm \sqrt{-1}$ since $16\pm
\sqrt{-1}$ have prime norm~$257$.  This gives~$8$ elliptic
curves,
$4$ in one isogeny class \lmfdbecnfiso{2.0.4.1}{257.1}{a} with
conductor $\p=(16+\sqrt{-1})$, linked by $2$-isogenies, and their
Galois conjugates in isogeny class \lmfdbecnfiso{2.0.4.1}{257.2}{a}.
The quadratic twists by $1 \pm 16\sqrt{-1}$ have
good reduction at $2$ and give curves of conductor $\p^2$ in isogeny classes
\lmfdbecnfiso{2.0.4.1}{66049.1}{a} and \lmfdbecnfiso{2.0.4.1}{66049.3}{a}.

Over $K=\Q(\sqrt{-3})$ let $\varepsilon$ be a $6$th root of unity
generating the unit group.  Taking $u=\varepsilon^2$ or its Galois
conjugate, we obtain elliptic curves with prime conductors~$\p$ of
norm~$241$.  Again there are two Galois conjugate isogeny
classes \lmfdbecnfiso{2.0.3.1}{241.1}{a} and \lmfdbecnfiso{2.0.3.1}{241.3}{a},
each containing $4$ elliptic curves linked by $2$-isogenies. The
quadratic twists
by $16 \pm u$ have good reduction at $2$, conductor~$\p^2$, in isogeny
classes \lmfdbecnfiso{2.0.3.1}{58081.1}{a} and \lmfdbecnfiso{2.0.3.1}{58081.3}{a}.

The next result shows that, apart from these sporadic cases, all
elliptic curves with odd prime conductor and rational $2$-torsion come
from an analogue of the Setzer-Neumann family over~$\Q$.

\begin{thm}
  Let $K$ be an imaginary quadratic field with class number~$1$, and
  $\varepsilon$ a generator of its unit group.  Let $E$ be an elliptic
  curve defined over $K$ with conductor an odd prime power~$\p^r$ and
  a $K$-rational $2$-torsion point.  Assume that $E$ has a quadratic
  twist with multiplicative reduction at $\p$.  Then $E$ is either
\begin{enumerate}
\item one of the sporadic curves listed above, where $\p$ has
  norm~$17$ (over all fields), or $257$ (over $K=\Q(\sqrt{-1})$ only) or
  $241$ (over $K=\Q(\sqrt{-3})$ only); or
\item isomorphic or $2$-isogenous to $E_{a,b}$ where
  $b=16\varepsilon$ and $a$ satisfies an equation of the form
  \[
  a^2 = u\pi^r + 64\varepsilon,
  \]
  with $r$ odd, $u$ a unit and $u \pi^r \equiv 1
  \pmod{\frac{8}{e_2}}$; or
\item the quadratic twist by $u\pi$ of the previous case, without
  any congruence condition.
\label{thm:multiplicativecase}
\end{enumerate}
\end{thm}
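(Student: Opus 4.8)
The plan is to run the parametrization of Theorem~\ref{thm:classification2torsion} in the one remaining case $A=1$, where the base curve $E_{a,b}$ has multiplicative reduction at $\p$, and to reduce everything to the integral solutions of the single equation~(\ref{eq:twotorsioncases}), which now reads $\tilde a^2=B+C$. Since the conductor is a power of the single odd prime $\p=(\pi)$, the only primes in play are $\p$ and those dividing $2$; hence $B$ and $C$ are coprime, supported on $\{\p\}\cup\{\q\mid2\}$, and their sum is a perfect square. First I would read off from Tables~\ref{table:p-valuations} and~\ref{table:q-valuations} the admissible valuation patterns: at $\p$, multiplicative reduction forces $\vp(A)=0$ and $\vp(b)=0$, so exactly one of $B,C$ carries the factor $\pi^r$ while the other is a $\p$-unit; at the primes above $2$ the $2$-part either attaches wholly to one of $B,C$ with valuation $6e_2$ (the ordinary case) or both are $2$-adic units (the supersingular case). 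The two choices of which of $B,C$ receives $\pi^r$ are interchanged by the $2$-isogeny with kernel $(0,0)$, which is what produces the phrase ``isomorphic or $2$-isogenous'' in the statement.

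Next I would extract the generic family. Taking $C$ to be the $2$-heavy factor and unwinding $a=2A\tilde a/\gcd(2,C)$, $b=AC/\gcd(2,C)^2$ gives, up to squares of units, $C=64\varepsilon$ and $b=16\varepsilon$, with $B$ supported only on $\p$; coprimality then forces $B=u\pi^r$ for a genuine unit $u\in\Om_K^*$. Here $\tilde a=a$, and~(\ref{eq:twotorsioncases}) becomes exactly $a^2=u\pi^r+64\varepsilon$. The parity of $r$ is forced by this shape: if $r$ were even then $a^2-4b=u\pi^r$ would be a square times a unit, the curve would gain full $2$-torsion, and we would land instead in the universal family of Proposition~\ref{prop:sporadic-family}; so the genuinely multiplicative (single $2$-torsion) curves have $r$ odd. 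This is case~(2), and its $u\pi$-twist, which has additive reduction and conductor $\p^2$, is case~(3).

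The remaining solutions are those for which $a^2-4b$ is a square, so that the curve acquires full $2$-torsion and fits the universal family $E_u$ of Proposition~\ref{prop:sporadic-family} with Legendre parameter $(u+16)/u$. For these I would invoke that proposition directly: a curve of prime power conductor arises precisely when $u+16$ is a prime power with $u\in\Om_K^*$ a unit. As $\Om_K^*$ is finite, one simply enumerates the units and checks which $u+16$ is a prime power, recovering norm $17$ over every field, norm $257$ over $\Q(\sqrt{-1})$ (from $u=\pm\sqrt{-1}$), and norm $241$ over $\Q(\sqrt{-3})$ (from $u=\varepsilon^{\pm2}$). These are exactly the sporadic curves of case~(1), and the finiteness is immediate.

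Finally, in each family I would decide good reduction at the primes above $2$ by Kraus's criterion (Proposition~\ref{prop:Kraus}) applied to the invariants~(\ref{eqn:Eab-invariants}): because the base curve $E_{a,b}$ may be bad at $2$, one must determine which of the four twists $E^s$, $s\in\{\varepsilon,\pi,\varepsilon\pi\}$, is good there. I expect this $2$-adic step to be the main obstacle. The congruence $u\pi^r\equiv1\pmod{8/e_2}$ in case~(2) is precisely the translation of Kraus's condition (that $-c_6$ be congruent to a square) into a congruence on $u\pi^r$, whereas twisting by $u\pi$ multiplies $c_6$ by a square and so satisfies Kraus unconditionally, which is why case~(3) carries no congruence. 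The bookkeeping has to be carried out separately for the three behaviours of $2$ --- inert ($e_2=1$), ramified ($e_2=2$, where a supersingular subcase with $b$ a $2$-adic unit must be reconciled with the family of case~(2)), and split in $\Q(\sqrt{-7})$ --- and it is there that the argument is most delicate.
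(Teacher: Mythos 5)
Your overall strategy --- run Theorem~\ref{thm:classification2torsion} with $A=1$, sort the solutions of (\ref{eq:twotorsioncases}) by valuation patterns, and settle reduction above $2$ via Proposition~\ref{prop:Kraus} --- is the same as the paper's, but two of its essential steps are missing, and as written your case division is not exhaustive. First, the parity argument is wrong: if $r$ is even then $a^2-4b=u\pi^r$ is a \emph{unit times} a square, and this gives full $2$-torsion only when $u$ is trivial in $\Om_K^*/(\Om_K^*)^2$; the subcase $a^2-4b=\varepsilon Q^2$ lies in neither of your branches. (The paper eliminates it by its opening observation that $\varepsilon$ is not congruent to a square modulo~$4$, so $a^2=\varepsilon Q^2+64\varepsilon$ is impossible modulo~$4$.) Second, you normalize twice with a single $2$-isogeny: the isogeny swaps the pair $(B,C)$, so you may place the $2$-part in $C$ \emph{or} the $\pi$-part in $B$, but not both simultaneously. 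The configuration in which $\pi^r$ and the factor $64$ sit in the same coordinate --- the paper's equations (\ref{eqn:case2}) and (\ref{eqn:case4}), namely $a^2=1+64P$ and $a^2=\varepsilon+64P$ --- is a genuinely distinct case that your proposal never analyzes.

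This gap is not cosmetic, because it changes the classification. Consider $(a,b)=(9,16)$ with $\pi^r=17$ (valid over every $K$ in which $17$ is not split): it solves $a^2=u\pi^r+64$ with $r=1$ odd and trivial unit on the $2$-part, and $a^2-4b=17$ is not a square. Your dichotomy therefore either files this curve in the ``generic family'' (if your $\varepsilon$ is allowed to equal $1$), contradicting the theorem's requirement that $b=16\varepsilon$ with $\varepsilon$ the nontrivial generator, or misses it entirely; in fact it is one of the sporadic norm-$17$ curves. What separates the Setzer--Neumann family from the sporadic list is exactly the set of factorization arguments you omit: $(a-8)(a+8)=P$, $(a-1)(a+1)=64P$ and $(a-Q)(a+Q)=64\varepsilon$ each force one factor to be a unit, collapsing the trivial-unit and even-valuation branches to finitely many curves. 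Your substitute --- enumerating units $u$ in Proposition~\ref{prop:sporadic-family} --- yields only the \emph{existence} of the sporadic curves: that proposition does not assert (and it requires proof) that curves of prime power conductor arise ``precisely'' when $u+16$ is a prime power with $u$ a unit, so the completeness half of case~(1) is unproved. Finally, the supersingular case and the split case $K=\Q(\sqrt{-7})$, which in the paper require solving $\tilde a^2=P+1$, $\tilde a^2=P+\varepsilon$ and $a^2=\pm T+UP$ and then showing via Proposition~\ref{prop:Kraus} that they contribute no curves of odd conductor, are acknowledged in your final paragraph but deferred rather than carried out; they are part of the proof, not an afterthought.
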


\begin{proof}
We start with the observation that in each case $\varepsilon$ is not
congruent to a square modulo~$4$, which may be checked easily and
which will be used repeatedly.

As before we use Theorem~\ref{thm:classification2torsion} to first
find the curves with minimal parameters, arising from solutions to
(\ref{eq:twotorsioncases}) with $A=s=1$.  Up to $2$-isogeny, we may
assume that $a=\tilde{a}$ is odd, that $B$ is odd and $C$ divisible
by~$64$ with $C/64$ odd, except in the case $K=\Q(\sqrt{-7})$ where
$B$ and $C$ are each divisible by the $6$th power of one of the two primes
dividing~$2$, or the supersingular case over $\Q(\sqrt{-1})$ or
$\Q(\sqrt{-2})$.  We will leave these last cases to the end.

Scaling by squares of units, we must solve each of the following
equations:
\begin{eqnarray}
  a^2 = P+64 \label{eqn:case1}\\
  a^2 = 1+64P \label{eqn:case2}\\
  a^2 = P+64\varepsilon \label{eqn:case3}\\
  a^2 = \varepsilon +64P \label{eqn:case4}
\end{eqnarray}
where $P$ is an odd prime power, i.e. an element of $\Om_K$ with
precisely one prime factor. We immediately see that (\ref{eqn:case4})
has no solution modulo~$4$.

(\ref{eqn:case1}) factors as $(a-8)(a+8)=P$.  Without loss of
generality (changing $a$ for $-a$ if necessary) we have $P\mid(a-8)$;
writing $a=8+Pt$ leads to $t(16+Pt)=1$, so $t$ is a unit, and
$16-t^{-1}=-Pt$.  Setting $u=-t^{-1}$ leads to one of the sporadic
cases (we have one of the curves $2$-isogenous to $E_{-t^{-1}}$) and
its quadratic twists.

(\ref{eqn:case2}) factors as $(a-1)(a+1)=64P$.  Now $P$ divides one
factor, and also one factor is divisible exactly by~$2$, the other
by~$32$.  By symmetry this gives two cases to consider: if $a=1+32Pt$
with $t$ odd then $t(1+16Pt)=1$ so $t$ is a unit and $16Pt=t^{-1}-1$
which is impossible.  Otherwise $a=1+2Pt$ with $t$ odd, and
$t(1+Pt)=16$ so again $t$ is a unit and we have a sporadic case (a
twist of $E_{-t}$).

In (\ref{eqn:case3}) we divide according to whether the valuation~$r$
of $P$ is even or odd.  If even then we must have $P=Q^2$ with $Q$ a
prime power, since $P=\varepsilon Q^2$ gives a contradiction
modulo~$4$.  Now $(a-Q)(a+Q)=64\varepsilon$; by symmetry $a=Q+32t$
with $t$ odd, so $t(Q+16t)=\varepsilon$, leading to the third sporadic
case (a twist of $E_{-\varepsilon t^2}$).

Otherwise in (\ref{eqn:case3}) we have $P=u\pi^r$ with $u$ a unit and
$r$ odd, leading to the Setzer-Neumann family. Recall that $c_4$ is
odd and $2c_6=a(9b-2a^2)$, hence Proposition~\ref{prop:Kraus} implies
that $a \equiv \square \pmod 4$ so $a^2 \equiv 1 \pmod 8$ if $2$ is
unramified in $K$ and $a^2 \equiv 1 \pmod 4$ otherwise. In any case,
the same criterion implies that the quadratic twist by $u \pi$ has
good reduction at $2$.

Over $K=\Q(\sqrt{-7})$ we must also consider the equation
\begin{equation}
  a^2 = \pm T +UP
\end{equation}
(up to Galois conjugation and $2$-isogeny) where $T=\alpha^6$ with
$\alpha=(1+\sqrt{-7})/2$ and $U=\overline{T}$ so that $TU=64$; here
$P$ again denotes a prime power.  The minus sign is impossible
modulo~$\overline{\alpha}^2$, and with the plus sign we can factor as
$(a-\alpha^3)(a+\alpha^3)=UP$.  Arguing as in earlier cases one finds
that this equation has no solutions.

Lastly we consider curves which are supersingular at $\q \mid 2$,
which by Theorem \ref{thm:classification2torsion} and
Corollary~\ref{Cor:ab-q-valuations} arise from solutions to the
following equations:
\begin{eqnarray}
  \tilde{a}^2=P+1 \label{eqn:sscase1}\\
  \tilde{a}^2=P+\epsilon \label{eqn:sscase2}
  \end{eqnarray}
with $a=2\tilde{a}$.

(\ref{eqn:sscase1}) factors as $(\tilde{a}-1)(\tilde{a}+1)$, and one
of the factors is a unit. This gives solutions $P=3$ and $P=-1\pm 2i$
over $\Q(i)$ but the associated curves with $(a,b)=(4,1)$ and
$(2\pm2i,1)$ have bad reduction at~$1+i$ as do all their quadratic
twists.

In (\ref{eqn:sscase2}) the base curve has
$(a,b)=(2\tilde{a},\varepsilon)$ with
$(c_4,c_6)=(2^4(4P+\varepsilon),2^6\tilde{a}(-8P+\varepsilon))$.  We
scale by~$\tau=1+i$ (respectively $\sqrt{-2}$) to
get~$(c_4,c_6)=(\tau^4(4P+i),\tau^6\tilde{a}(8P-i))$ over $\Q(i)$
or~$(c_4,c_6)=(\tau^4(4P-1),\tau^6\tilde{a}(-8P-1))$ over
$\Q(\sqrt{-2})$ respectively.  We must test whether these, or their
twists by $s\in\{1,\varepsilon,\pi,\varepsilon\pi\}$ have good
reduction at~$\tau$.  Note that $\vq(c_4)=4$ and $\vq(c_6) \ge 7$, and
that we are in the second case of Proposition~\ref{prop:Kraus}, with
$a_1=\tau$ (since we are in the supersingular case).

Over $\Q(i)$ the first congruence in Proposition~\ref{prop:Kraus}
reduces to $1+is^2\equiv0\pmod2$ which is impossible.

Over $\Q(\sqrt{-2})$, in the notation of Proposition~\ref{prop:Kraus}
the first condition on~$d$ is always satisfied (since $s$ is odd),
while the second is that either $(1-s^2)/2$ or $(1-s^2)/2+2\tau$ is a
square modulo~$4$, depending on whether $\vq(\tilde{a})\ge2$ or
$\vq(\tilde{a})=1$.  At least one of these is satisfied provided that
$s\equiv\pm1\pmod{\tau^3}$, and in either case $d/16\equiv0\pmod4$.
But now the final condition implies $s^2\equiv-1\pmod4$,
contradiction.
\end{proof}

The above classification implies the following crucial fact, used in
the main theorem of the paper, and which was an important motivation
for this section.

\begin{coro}
  Every isogeny class of elliptic curves defined over $K$ with 
  prime conductor and a $K$-rational $2$-torsion point contains
  a curve whose discriminant has odd valuation.
\label{coro:2torsion-odddiscriminant}
\end{coro}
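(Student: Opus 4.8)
The plan is to reduce to the already-classified multiplicative-reduction curves and then read off discriminant valuations. First I would observe that if $E$ has prime conductor $\p$ then the exponent of $\p$ in $\cond(E)$ equals $1$, so $E$ has multiplicative reduction at $\p$; in particular the (trivial) quadratic twist of $E$ has multiplicative reduction, so $E$ falls under the hypothesis of Theorem~\ref{thm:multiplicativecase} rather than the good-twist and additive-twist cases of Theorems~\ref{thm:goodtwist} and~\ref{thm:additivetwist}, which produce only curves of conductor $\p^2$ (additive reduction). Moreover option~(3) of Theorem~\ref{thm:multiplicativecase} is a twist by $u\pi$, which acquires additive reduction at $\p$ and hence conductor $\p^2$; therefore $E$ must be either one of the sporadic curves of option~(1) or one of the base Setzer--Neumann curves of option~(2).

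In the Setzer--Neumann case I would compute the discriminant directly. Here $E$ is isomorphic or $2$-isogenous to $E_{a,b}$ with $b=16\varepsilon$ and $a^2=u\pi^r+64\varepsilon$, where $r$ is odd and $u$ a unit. Using the invariant formula~(\ref{eqn:Eab-invariants}) we have $\Disc(E_{a,b})=b^2(a^2-4b)$, and since $a^2-4b=a^2-64\varepsilon=u\pi^r$ this gives $\Disc(E_{a,b})=256\varepsilon^2 u\pi^r$, whose $\p$-valuation is exactly $r$. As $r$ is odd and $E_{a,b}$ lies in the isogeny class of $E$, this already exhibits the required curve of odd discriminant valuation.

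For the sporadic curves I would use the lemma immediately preceding Proposition~\ref{prop:sporadic-family}, which records that the four curves in the $2$-isogeny class of $E_u$ have discriminants $u^2(u+16)^2$, $-u(u+16)^4$, $u^4(u+16)$ and $u(u+16)$. In each sporadic case $\p$ is the prime generated by $u+16$ (of norm $17$, $257$ or $241$), so $\vp(u+16)=1$, while $u$ is a unit; hence these discriminants have $\p$-valuations $2$, $4$, $1$ and $1$ respectively. Passing to the quadratic twist by the unit $-u$ (needed to secure good reduction above~$2$) multiplies discriminants by a sixth power of a unit and so leaves all $\p$-valuations unchanged. Thus the isogeny class always contains a curve whose discriminant has the odd valuation~$1$.

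The only care needed is bookkeeping, and I do not expect a genuine obstacle here: the substantive classification has already been carried out in Theorem~\ref{thm:multiplicativecase}. The two points to verify are that prime conductor forces multiplicative reduction and thereby rules out every additive-reduction family (isolating exactly options~(1) and~(2) of that theorem), and that the $\p$-valuations extracted from the two discriminant formulas are correct; granting these, the statement follows immediately, since a prime-conductor curve with a rational $2$-torsion point must lie in the Setzer--Neumann or sporadic families, each of which visibly contains a member of odd discriminant valuation.
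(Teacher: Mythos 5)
Your proposal is correct and follows essentially the same route as the paper: prime conductor forces multiplicative reduction, so Theorem~\ref{thm:multiplicativecase} applies, and the Setzer--Neumann discriminant $2^8u\varepsilon^2\pi^r$ with $r$ odd settles option~(2), while the sporadic curves are checked directly. The only difference is that you explicitly verify the sporadic case via the discriminants $u^2(u+16)^2$, $-u(u+16)^4$, $u^4(u+16)$, $u(u+16)$ of the $2$-isogeny class of $E_u$ (valuations $2,4,1,1$ at $\p=(u+16)$, unchanged by the unit twist), whereas the paper records this only as a parenthetical assertion.
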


\begin{proof}
  If $E$ is a curve over $K$ of prime conductor $\id{p}$ and a
  $K$-rational $2$-torsion point, we are in the multiplicative
  case. By Theorem \ref{thm:multiplicativecase} $E$ is either a
  sporadic curve of conductor norm $17$, $241$ over $\Q(\sqrt{-1})$ or
  $257$ over $\Q(\sqrt{-3})$ (all of these have a curve with prime
  discriminant  in their isogeny class) or is isogenous to $E_{a,b}$ with
  $b=16 \varepsilon$ and $a^2=u \pi^r + 64 \varepsilon$ with $r$
  odd. Such curves have discriminant $2^8 u\varepsilon^2 \pi^r$, so odd valuation.
\end{proof}

\begin{remark}
  The computations done in this section could be generalized to other
  number fields of class number one, as the number of units modulo
  squares is always finite. The case of real quadratic fields is of
  particular interest, requiring almost no modification except to
  allow for $\Om_K^*/(\Om_K^*)^2$ having order~$4$. In this case,
  Proposition~\ref{prop:sporadic-family} gives a possibly infinite family of
  elliptic curves with bad reduction only at the primes dividing
  $u^k+16$, where $u$ is a fundamental unit. For example, over
  $\Q(\sqrt{5})$, we get curves of prime conductor with norms
  $1009, 35569, 1659169, \ldots$,  but to our knowledge it is not known
  whether we can get infinitely many curves of prime conductor in this
  way.
\end{remark}

We end this section with an interesting phenomenon concerning curves of prime
conductor and rational $2$-torsion.

\begin{thm}
  Let $K$ be an imaginary quadratic field with class number $1$, and
  $E/K$ be an elliptic curve of prime power conductor with a
  $K$-rational $2$-torsion point. Then $E$ has rank $0$.
\end{thm}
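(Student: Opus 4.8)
The plan is to use the classification just established to reduce to a handful of explicit families, and then to bound the rank of each by a descent through the rational $2$-isogeny that every such curve carries. By Theorems~\ref{thm:goodtwist}, \ref{thm:additivetwist} and~\ref{thm:multiplicativecase}, a curve $E/K$ of odd prime power conductor with a $K$-rational $2$-torsion point is, up to isomorphism and $2$-isogeny, one of: a member of one of the three good-twist CM families of conductor $\p^2$; a base-change of a curve in \lmfdbeciso{49}{a} (the additive-twist case, over the fields where $7$ is inert); one of the finitely many sporadic curves of conductor-norm $17$, $241$ or $257$; or a member of the Setzer--Neumann family $E_{a,b}$ with $b=16\varepsilon$ and $a^2-4b=u\pi^r$ (together with its twist by $u\pi$). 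The sporadic and additive-twist curves comprise finite lists over each field, whose ranks can be computed directly; for the base-changes of \lmfdbeciso{49}{a} one uses $\mathrm{rank}(E/K)=\mathrm{rank}(E/\Q)+\mathrm{rank}(E^{(-d)}/\Q)$ together with the fact that \lmfdbeciso{49}{a} and its relevant quadratic twists all have rank~$0$ over~$\Q$. The substance is therefore the remaining infinite families, which I treat uniformly.

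For these I use the $2$-isogeny $\phi\colon E\to E'$ with kernel the given $2$-torsion point, with dual $\hat\phi$. The standard descent inequality reads
\[
  \mathrm{rank}(E(K)) \le \dim_{\F_2}\mathrm{Sel}^{(\phi)}(E/K) + \dim_{\F_2}\mathrm{Sel}^{(\hat\phi)}(E'/K) - 2,
\]
where the two Selmer groups are subgroups of $K^\times/(K^\times)^2$, each containing the dimension-one ``trivial'' subgroup generated by the image of a rational $2$-torsion point. It therefore suffices to prove that each Selmer group equals exactly this trivial subgroup. Since $E$ has good reduction away from $\p$ and the primes above~$2$, every Selmer class is unramified outside $\{\p\}\cup\{\q\mid 2\}$ and so lies in an explicit finite group of square classes: for the $\hat\phi$-descent these divide~$b$, and for the $\phi$-descent they divide~$a^2-4b$. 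In each family $b$ and $a^2-4b$ are, modulo squares, a unit times a single power of~$\p$ (with odd exponent in the multiplicative case), so beyond the trivial class the only candidates are the finitely many products of~$\varepsilon$, a generator~$\pi$ of~$\p$, and the prime(s) dividing~$2$.

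The heart of the proof is to eliminate these candidate classes by checking that the associated homogeneous spaces fail to be locally soluble. At~$\p$ I would use the reduction type furnished by the classification---multiplicative in the Setzer--Neumann case, additive in the CM cases---to pin down the local image of the descent map, exactly as in Setzer's original argument over~$\Q$; the odd discriminant valuation forces the $\p$-part of any Selmer class to coincide with the distinguished class. At the primes dividing~$2$ I would invoke Kraus's criterion (Proposition~\ref{prop:Kraus}) together with the congruence conditions encoded in the families (for instance $u\pi^r\equiv1\pmod{8/e_2}$ in Theorem~\ref{thm:multiplicativecase}, or $b\equiv-1\pm2i\pmod8$ in the Gaussian good-twist family), which are exactly what force good reduction at~$2$ and simultaneously determine the local images there. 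Running these local computations over the nine fields, and through the small set of unit classes that arise, collapses each Selmer group to its trivial part and yields $\mathrm{rank}(E(K))=0$.

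I expect the decisive obstacle to be the local analysis at the primes above~$2$: because the isogeny degree equals the residue characteristic, the local image of the Kummer map is not merely the unramified subgroup and must be computed explicitly from the Weierstrass models, with the ramified fields $K=\Q(\sqrt{-1}),\Q(\sqrt{-2})$ (including the supersingular subcase) and the split field $K=\Q(\sqrt{-7})$ each demanding separate treatment. One should also record that in every family $b$ and $a^2-4b$ are nontrivial square classes---so that $E$ carries a unique rational $2$-torsion point and the image of $E'(K)[\hat\phi]$ in $E'(K)/\phi(E(K))$ is nontrivial---which is precisely what legitimises the subtraction of~$2$ in the rank inequality and hence, once both Selmer groups are shown to be trivial, forces $\mathrm{rank}(E(K))=0$ outright, with no appeal to the finiteness of $\sha$.
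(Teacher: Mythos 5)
Your overall route is essentially the paper's own: reduce, via the classification theorems of Section~\ref{section 2 torsion}, to the finite sporadic/additive lists plus the infinite families, dispatch the finite lists by direct computation (or the base-change rank formula), and prove rank zero for the Setzer--Neumann family by descent through the rational $2$-isogeny --- the paper's proof consists of exactly this, asserting that Setzer's $2$-descent over $\Q$ carries over verbatim and that the remaining curves can be looked up or computed in software. Where you go further is in running the isogeny descent on the three good-twist CM families of Theorem~\ref{thm:goodtwist} as well; this is a genuine improvement in rigour rather than a divergence, because those families are infinite (one curve per admissible prime $\pi$) and cannot literally be handled by table lookup, a point the paper's two-line proof elides. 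For those families you also have a safety net you do not mention: the curves have CM by an order in $K$ and are defined over $K$, so $E(K)\otimes\Q$ is a $K$-vector space and the rank is automatically even; hence even a descent bound of $\mathrm{rank}\le 1$ suffices. This matters because your stronger claim --- that both Selmer groups are \emph{exactly} the one-dimensional subgroups coming from torsion --- could fail for some member through a nontrivial $\sha(E/K)[\phi]$ without the theorem failing, and you have forsworn any appeal to $\sha$.

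There is, however, one concrete error in your uniform set-up. For the good-twist family over $K=\Q(\sqrt{-7})$, the base curve \lmfdbec{49}{a}{1} has $(a,b)=(21,112)$, so $a^2-4b=-7$, which \emph{is} a square in $K$: these curves, and all their quadratic twists in the family, have full $2$-torsion over $K$ (for the $2$-isogenous model $(a,b)=(-42,-7)$ it is $b=-7$ that becomes a square instead). So your closing claim --- that in every family $b$ and $a^2-4b$ are nontrivial square classes, that $E$ has a unique rational point of order $2$, and that each Selmer group contains, and should equal, a one-dimensional ``trivial'' subgroup --- fails for this entire infinite family; indeed, taken literally, your two Selmer claims there would yield
$\mathrm{rank}(E(K))\le 0+1-2<0$,
which is absurd. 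The subtraction of $2$ in the inequality is in fact valid unconditionally (also in the full $2$-torsion case), but the endgame must be rebalanced: for this family one must show that $\mathrm{Sel}^{(\phi)}(E/K)=0$ while $\mathrm{Sel}^{(\hat\phi)}(E'/K)$ equals the \emph{two}-dimensional image of $E(K)[2]$. This is a repairable bookkeeping gap, not a wrong method, but as written your uniform argument does not cover one of the families it claims to treat.
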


\begin{proof}
  A simple $2$-descent computation shows that this is the case for
  curves in the Setzer-Neumann family (this phenomenon also occurs for
  rational elliptic curves, and the proof is the same).  The remaining
  sporadic cases can be handled by looking at tables \cite{lmfdb} or
  computing the rank of the curve \lmfdbec{27}{a}{1} over the
  different fields $K$ directly, for example using SageMath
  \cite{sage} (using a Pari/GP implementation due to Denis Simon based on the
  article \cite{Simon}).
\end{proof}

\section{On some irreducible finite flat group schemes over $\Spec(\Om_K)$}
\label{section:evenexponents}
Let $E/K$ be a modular elliptic curve of odd prime conductor, whose
discriminant is a square. Then by Theorem~\ref{thm:Mazur} $E[2]$ is a
finite flat group scheme over $\Spec(\Om_K)$ of type $(2,2)$. It is
either reducible or irreducible. In the reducible case, it contains a
factor isomorphic to the multiplicative or the additive group
(\cite[Corollary page 21]{Oort-Tate}), hence the curve has a point of
order $2$ as studied in the previous section. The group
$\Aut_{G_K}(E[2])$ cannot be isomorphic to the whole of $\GL_2(\F_2)$ as
proved in Theorem~\ref{thm:mainthm}, so the only remaining possibility for it is the cyclic group
of order $3$. Note that such a group scheme $E[2]$ does not occur over
$\Spec(\ZZ)$ as there are no cubic extensions of $\Q$ unramified
outside $2$.  The same is true for four of the nine imaginary
quadratic fields under consideration here.

\begin{lemma}
  There are no elliptic curves $E/K$ of odd prime conductor and even
  discriminant valuation whose residual $2$-adic Galois representation
  is cyclic of order $3$ for $K=\Q(\sqrt{-d})$, $d=1, 2, 3$ or $7$.
\label{lemma:cyclic2image}
\end{lemma}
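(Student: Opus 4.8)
The plan is to convert the hypothesis on the residual image into the existence of a cyclic cubic extension of $K$ unramified outside $2$, and then to rule this out field by field using class field theory. First I would set up the contradiction: suppose $E/K$ has odd prime conductor $\p$ with $v_{\p}(\Disc(E))$ even. Since $\p$ is the only bad prime, this says that the ideal $(\Disc(E))$ is a square, so Theorem~\ref{thm:Mazur} shows that $E[2]$ is a finite flat group scheme over $\Spec(\Om_K)$; in particular the residual representation $\overline{\rho_{E,2}}\colon G_K\to\GL_2(\F_2)$ is unramified outside $2$. The assumption that its image is cyclic of order $3$ means that $\overline{\rho_{E,2}}$ factors through a surjection $G_K\twoheadrightarrow\Z/3$, and hence cuts out a cyclic cubic extension $L/K$ unramified outside $2$. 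It therefore suffices to prove that for $K=\Q(\sqrt{-d})$ with $d\in\{1,2,3,7\}$ no such extension exists.

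Since $[L:K]=3$ is coprime to the residue characteristic $2$, any ramification of $L/K$ above $2$ is tame, so the ramification index at a prime $\q\mid2$ is either $1$ or $3$, and $e=3$ can occur only if $3\mid(\norm\q-1)$. For $d=1,2,7$ every prime above $2$ has norm $2$ (the prime being $(1+\sqrt{-1})$, $(\sqrt{-2})$, or one of the two split primes, respectively), so $3\nmid(\norm\q-1)=1$ and $L/K$ must be unramified everywhere. As $K$ has class number $1$, its Hilbert class field is $K$ itself, forcing $L=K$, a contradiction.

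The remaining case $K=\Q(\sqrt{-3})$ is the only one requiring real work, since here $2$ is inert with $\norm(2)=4$ and $3\mid(4-1)$, so tame cubic ramification at $2$ is a priori allowed. Being tame, $L/K$ has conductor dividing $(2)$, so $L$ lies in the ray class field of conductor $(2)$. I would compute this from the exact sequence $\Om_K^\times\to(\Om_K/(2))^\times\to\Cl_{(2)}(K)\to\Cl(K)\to1$: here $\Cl(K)=1$ and $(\Om_K/(2))^\times\cong\F_4^\times$ is cyclic of order $3$, while the unit $\omega=\frac{-1+\sqrt{-3}}{2}$ reduces to a generator of $\F_4^\times$, so the left-hand map is already surjective. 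Hence $\Cl_{(2)}(K)=1$, again forcing $L=K$ and a contradiction. Equivalently, the absence of a cyclic cubic extension unramified outside $2$ can be verified directly in \cite{PARI2}.

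The one genuine obstacle is thus the inert case $d=3$, where the elementary norm argument fails and the conclusion hinges on the sixth roots of unity of $K$ exhausting $(\Om_K/(2))^\times$. This is precisely the feature that distinguishes these four fields from the five fields $d\in\{11,19,43,67,163\}$, which are also inert at $2$ but \emph{do} admit such a cubic extension, and for which the group scheme with image cyclic of order $3$ genuinely occurs.
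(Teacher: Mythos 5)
Your proof is correct, and it follows the paper's reduction exactly: Mazur's theorem (Theorem~\ref{thm:Mazur}) turns the hypotheses into the existence of a cyclic cubic extension $L/K$ unramified outside $2$, and the task becomes ruling such an $L$ out for $d\in\{1,2,3,7\}$. Where you diverge is in how that non-existence is established. The paper simply asserts that it ``is easy to verify (for example using explicit class field theory as implemented in PARI)'', i.e.\ it delegates the verification to a machine computation, consistent with how similar facts are handled elsewhere in the paper (e.g.\ in the proofs of Theorem~\ref{thm:mainthm} and Theorem~\ref{thm:Kraus}). You instead give a self-contained argument: since $[L:K]=3$ is prime to $2$, any ramification above $2$ is tame, and in an abelian extension the inertia group at $\q$ has order dividing $\norm\q-1$ (from $\phi\sigma\phi^{-1}=\sigma^{\norm\q}$ and commutativity); this kills ramification outright for $d=1,2,7$, where every prime above $2$ has residue field $\F_2$, reducing those cases to the triviality of the class group, while for $d=3$ tameness bounds the conductor by $(2)$ and the ray class group $\Cl_{(2)}(K)$ is trivial because the cube root of unity $\omega$ already surjects onto $(\Om_K/(2))^\times\cong\F_4^\times$. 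Both computations check out (in particular $\omega\not\equiv1\pmod 2$ since $\norm(\omega-1)=3$). What your route buys is transparency: it removes the computer dependence, and it pinpoints the mechanism distinguishing these four fields from $d\in\{11,19,43,67,163\}$ — there $2$ is inert of norm $4$ but the only units $\pm1$ die in $\F_4^\times$, so $\Cl_{(2)}(K)$ has order $3$ and the ring class field of $\Z[\sqrt{-d}]$ provides exactly the cubic extension that the paper exploits in Theorem~\ref{thm:absred2}. The cost is length; the paper's citation-style proof is shorter and uniform across all nine fields.
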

\begin{proof}
  By the aforementioned result of Mazur, the extension $L/K$ obtained
  by adjoining the $2$-torsion points of $E$ is a cyclic cubic
  extension unramified outside $2$. It is easy to verify (for example
  using explicit class field theory as implemented in \cite{PARI2})
  that there is no such extension for these particular fields $K$.
\end{proof}
For the remaining fields $K=\Q(\sqrt{-d})$ for
$d\in\{11,19,43,67,163\}$, there is a unique cyclic cubic extension
$L/K$ unramified outside $2$, namely the ring class field associated
to the order $\Z[\sqrt{-d}]$ of index~$2$, which has class number~$3$ (since $2$ is
inert).  These are the splitting fields of the following
polynomials~$p(x)$ of discriminant $-4d$:
\begin{enumerate}[(i)]
\item $p(x) = x^3-x^2+x+1$ over $\Q(\sqrt{-11})$,
\item $p(x) = x^3-2x-2$ over $\Q(\sqrt{-19})$,
\item $p(x) = x^3-x^2-x+3$ over $\Q(\sqrt{-43})$,
\item $p(x) = x^3-x^2-3x+5$ over $\Q(\sqrt{-67})$,
\item $p(x) = x^3-8x+10$ over $\Q(\sqrt{-163})$.
\end{enumerate}
In particular the splitting field of $E[2]$ must be one of these
fields~$L$. Moreover, for each of these five values of~$d$ there is an
elliptic curve $E$ defined over $\Q$ with prime conductor $d$ and
discriminant $-d$, namely \lmfdbec{11}{a}{3}, \lmfdbec{19}{a}{3},
\lmfdbec{43}{a}{1}, \lmfdbec{67}{a}{1} and \lmfdbec{163}{a}{1}.  In
each case the base-change to $\Q(\sqrt{-d})$ has prime conductor
$(\sqrt{-d})$ and square discriminant $-d=\sqrt{-d}^2$, and the
$2$-division field is the splitting field of the corresponding cubic
$p(x)$.

We can construct further examples over each field as follows.  Rubin
and Silverberg showed in \cite[Theorem 1]{R-S} how to parametrize all
elliptic curves with given level~$2$ structure: given one curve
$E:y^2=x^3+ax+b$, all curves with residual $2$-adic representation
isomorphic to that of $E$ are obtained by specializing the family of
curves
\begin{align}
  \label{eq:residual2-rep}
  y^2&=x^3+A_4(u,v)x+A_6(u,v)\\
  \noalign{\text{with}}
 A_4(u,v)&=3(3av^2+9buv-a^2u^2),\nonumber\\
 A_6(u,v)&=27bv^3-18a^2uv^2-27abu^2v-(2a^3+27b^2)u^3\nonumber
\end{align}
at a pair of elements $(u,v)$ of $K$.  Note that scaling $u,v$ by
$c\in K^*$ gives the quadratic twist by~$c$; hence, up to isomorphism,
we may assume that $u,v\in\Om_K$ with square-free gcd.  The
discriminant of (\ref{eq:residual2-rep}) is $2^43^6\Delta(F)F^2$ where
$F(u,v)=v^3+avu^2+bu^3$ with discriminant $\Delta(F)=-(4a^3+27b^2)$.

\begin{thm}Let $K$ be an imaginary quadratic field of class number $1$
  and $E/K$ be a semistable elliptic curve whose residual $2$-adic
  representation has image cyclic of order $3$. Then $K=\Q(\sqrt{-d})$
  for $d=11, 19, 43, 67, 163$ and the valuation of $\Disc(E)$ at each
  prime of bad reduction is exactly $2$.
\label{thm:absred2}
\end{thm}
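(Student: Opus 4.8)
The plan is to pin down the field $K$ and the residual representation, then to realise $E$ inside the Rubin--Silverberg family~(\ref{eq:residual2-rep}) and read the discriminant valuation off that parametrisation. The equality ``$=2$'' will split into an easy lower bound and a harder upper bound.

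First I would identify $K$ and show $E$ is good at~$2$. Since the image of $\overline{\rho_{E,2}}$ is cyclic of order~$3$, inside $\GL_2(\F_2)\cong S_3$ it consists of the identity and the two $3$-cycles; in particular it contains no nontrivial unipotent and fixes no line. At a prime~$\p$ of multiplicative reduction the Tate parametrisation shows inertia acts through $\left(\begin{smallmatrix}1 & \vp(\Disc(E))\\ 0 & 1\end{smallmatrix}\right)\bmod 2$, which must be trivial; hence $\vp(\Disc(E))$ is even and $E[2]$ is unramified at~$\p$. Thus $L=K(E[2])$ is a cyclic cubic extension of~$K$ unramified outside~$2$, and Lemma~\ref{lemma:cyclic2image} forces $d\in\{11,19,43,67,163\}$ with $L$ the unique such field, the splitting field of the associated cubic~$p(x)$. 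The same fixed-line argument at a prime above~$2$ shows that a multiplicative prime would have to split completely in~$L$; but splitting completely at~$2$ would make $L/K$ unramified everywhere, contradicting $h_K=1$. Hence $E$ has good reduction at~$2$, its conductor is odd, and every bad prime carries even discriminant valuation, so in particular $\vp(\Disc(E))\ge2$.

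Next I would exploit that the residual representation is now determined up to isomorphism by~$L$ (the faithful $2$-dimensional $\F_2$-representation of $\Z/3$ is unique), and is realised by the base-change $E_0$ to~$K$ of the rational curve of conductor~$d$ and discriminant~$-d$, namely \lmfdbec{11}{a}{3}, \lmfdbec{19}{a}{3}, \lmfdbec{43}{a}{1}, \lmfdbec{67}{a}{1} or \lmfdbec{163}{a}{1}. By the Rubin--Silverberg theorem, $E$ is then a specialisation of~(\ref{eq:residual2-rep}) at some $(u,v)\in\Om_K$ with square-free $\gcd$, whose model has discriminant $2^43^6\Delta(F)F(u,v)^2$ with $F(u,v)=v^3+avu^2+bu^3$. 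Since $\Delta(F)$ is supported only on the primes above~$6d$, at every odd prime where $\Delta(F)$ is a unit the minimal discriminant valuation equals $2\,\vp(F(u,v))$; for the bad prime this reduces the theorem to showing $\vp(F(u,v))=1$ (the case where the bad prime is $(\sqrt{-d})$ itself being exactly the base-change situation, with valuation~$2$).

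The hard part is this upper bound, and I expect the difficulty to sit at the inert prime~$2$ together with a global finiteness input. Good reduction at~$2$ imposes, via Kraus's criterion (Proposition~\ref{prop:Kraus}) applied to $c_4=A_4(u,v)$ and $c_6=A_6(u,v)$, a congruence on $(u,v)$ modulo a power of~$2$; combined with the requirement that the odd bad locus be a single prime~$\p$ (so that the fixed even-order contribution of $\Delta(F)$ at $(\sqrt{-d})$ is absorbed into a precise power of $(\sqrt{-d})$ dividing~$F$), this cuts the problem down to a Thue--Mahler-type equation $F(u,v)=\eta\,(\sqrt{-d})^{k}\pi^{m}$ with $\eta$ a unit. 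I would then argue, as elsewhere in the paper, that every solution of this equation has $m=\vp(F(u,v))=1$, so that $\vp(\Disc(E))=2$ at each prime of bad reduction. The genuine obstacle I anticipate is ruling out the ``higher tangency'' solutions with $\vp(F)\ge2$: the local cubic $F(\cdot,1)$ already splits completely at such a $\p$ (because $\p$ splits in~$L$), so nothing local prevents $v/u$ from being congruent to a root to second order, and excluding these seems to require either the explicit solution of the above equation or a descent showing such specialisations fail to be globally semistable with good reduction at~$2$.
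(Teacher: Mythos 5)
Your preliminary steps are sound and, if anything, are argued more explicitly than in the paper itself: the identification of the five fields via Lemma~\ref{lemma:cyclic2image}, the evenness of $\vp(\Disc(E))$ at multiplicative primes, the observation that multiplicative reduction at the inert prime $2$ would make $L/K$ unramified everywhere and contradict the class number one hypothesis, and the passage to the family~(\ref{eq:residual2-rep}) all stand. But the entire content of the theorem is the step you leave open, and the route you sketch for it cannot be made to work: your equation $F(u,v)=\eta(\sqrt{-d})^k\pi^m$ is not a single Thue--Mahler equation but a family of them indexed by \emph{all} primes $\pi$ of $K$, and the set of solutions with $m=1$ is expected to be infinite (the paper conjectures infinitely many such curves over each of the five fields), so no finiteness theorem can be the engine of the proof. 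What is needed is a constraint on the exponent $m$ alone, uniform in $\pi$, and nothing in your setup produces one.

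The paper supplies exactly such a constraint, in two pieces, neither of which appears in your plan. First, it quotes the odd-$\ell$ part of the proof of Theorem~\ref{thm:mainthm} (the elementary cube argument for $\ell=3$, and the torsion classification together with level-lowering for $\ell\ge5$) to conclude that $\vp(\Disc(E))$ is a power of $2$; this disposes wholesale of your ``higher tangency'' cases with $\vp(F)=3,5,\dots$, and reduces the theorem to showing that the discriminant is not a fourth power. Second---and this is the key idea you are missing---that exclusion is a purely local congruence at the inert prime $2$, with no Diophantine input. Writing $c_4=72P(u,v)$, $c_6=-216Q(u,v)$ for the model obtained after rescaling $(u,v)$ by $1/2$ if necessary (the square-free gcd condition guarantees one may take $Q(u,v)$ odd), Kraus's criterion (Proposition~\ref{prop:Kraus}) for good reduction at $2$ first rules out $v_2(c_4)=3$, forcing $P(u,v)$ to be even, and then in the remaining case $v_2(c_4)\ge4$ produces an odd $a_3$ with $a_3^2\equiv -27\,Q(u,v)\equiv Q(u,v)\pmod 4$, whence the discriminant of the resulting model satisfies
\[
27\bigl(8P(u,v)^3-Q(u,v)^2\bigr)\;\equiv\;5\,Q(u,v)^2\;\equiv\;5\,a_3^4 \pmod 8,
\]
and $5$ times a fourth power is never a fourth power modulo $8$ when $2$ is inert. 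In your notation this shows directly that $m=\vp(F(u,v))$ is \emph{odd}; combined with the power-of-$2$ input it forces $m=1$. So your instinct that the obstruction sits at the inert prime $2$ was correct, but the mechanism is a congruence modulo $8$ extracted from Kraus's good-reduction criterion and used only to exclude fourth powers --- not a resolution of the equations $F(u,v)=\eta(\sqrt{-d})^k\pi^m$, and not a descent.
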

\begin{proof}
  The first condition comes from Lemma~\ref{lemma:cyclic2image}.
  Secondly, by the proof of Theorem~\ref{thm:mainthm} the valuation of
  $\Disc(E)$ is a power of~$2$.  It remains to show that the
  discriminant cannot be a fourth power: to see this, we use the
  parametrized families (\ref{eq:residual2-rep}), recalling that $2$
  is inert in each case.

  From (\ref{eq:residual2-rep}) for each field, after a little
  simplification we get $A_4 = -6 P(u,v)$ and $A_6 = 2 Q(u,v)$, with
  discriminant $1728(8P(u,v)^3 - Q(u,v)^2)$ where
\begin{enumerate}
\item $P=2u^2-17uv-v^2$, $Q=586u^3+102u^2v+12uv^2-17v^3$ for $d=11$,
\item $P=2u^2+9uv+3v^2$, $Q=46u^3+54u^2v+36uv^2+27v^3$ for $d=19$,
\item $P=8u^2-35uv+2v^2$, $Q=-2386u^3+420u^2v-48uv^2+35v^3$ for $d=43$,
\item $P=50u^2-53uv+5v^2$, $Q=-4618u^3+1590u^2v-300uv^2+53v^3$ for $d=67$,
\item $P=32u^2+45uv+12v^2$, $Q=838u^3+1080u^2v+576uv^2+135v^3$ for $d=163$.
\end{enumerate}

If $E$ has good reduction at $2$, since $\Disc(E)$ is even, it must be
divisible by $2^{12}$, so $4 \mid Q(u,v)$. Since $Q(u,v) \equiv v^3
\pmod 2$, $v$ must be even, and the condition $4 \mid Q(u,v)$ implies
that $u$ is even as well. After the substitution $(u,v) \to
(u/2,v/2)$, the new $Q(u,v)$ is odd (from the minimality condition)
with invariants $c_4=72P(u,v)$ and $c_6=-216Q(u,v)$. Clearly $v_2(c_4)
\ge 3$, but if it equals 3, we cannot get good reduction at 2 by
Kraus's criterion since the conditions
\begin{eqnarray*}
d= -a_1^6 +3a_1^2c_4 + 2c_6 &\equiv& 0 \pmod{16}\\
4a_1^2 d &\equiv& (a_1^4-c_4)^2 \pmod{256},
\end{eqnarray*}
are not compatible. The first one implies that $a_1$ is even, hence
the left hand side of the second equation is zero, while the right
hand side is not. Then $2 \mid P(u,v)$ and $v_2(c_4) \ge 4$. Kraus's
criterion now implies that there exists $a_1$ such that
$a_1^2 \equiv c_6/8 = -27Q(u,v) \equiv Q(u,v)\pmod 4$.  The
discriminant is now $27(8P^3-Q^2)\equiv 5Q^2 \equiv 5a_1^4\pmod8$,
which cannot be a fourth power since $5$ is not a fourth power
modulo~$8$.
\end{proof}

A natural question is whether there are infinitely many curves of
prime conductor, whose discriminant is a prime square. They are all
obtained by evaluating the previous equations at suitable pairs
$(u,v)$. The model described above has discriminant
$-2^63^6dF(u,v)^2$, where $F(u,v)$ is an explicit cubic form of
discriminant~$-4d$.  The values of $(u,v)$ to get good reduction at
$2$, $3$ and $\sqrt{-d}$ are given by congruence conditions, each one
giving a potentially infinite family, where one expects the cubic
$F(u,v)$ to attain infinitely many prime values.

To end the paper we give an example of an elliptic curve of this type
over each field, in addition to the base-change examples given above.

\begin{enumerate}
  \item
    $K=\Q(\sqrt{-11})=\Q(\alpha)$ where $\alpha^2-\alpha+3=0$:
    $$E:\quad y^2+y=x^3+\alpha x^2-x$$
    (with LMFDB label\footnote{The other curves here do not yet have
    LMFDB labels.} \lmfdbecnf{2.0.11.1}{47.1}{a}{1}) has prime
    conductor $\p=(\pi)$ with $\pi=7-2\alpha$ of norm $47$, and
    discriminant~$\pi^2$.
  \item
    $K=\Q(\sqrt{-19})=\Q(\alpha)$ where $\alpha^2-\alpha+5=0$:
    $$E:\quad y^2+y=x^3+(-\alpha-1)x^2+(2\alpha)x+(-\alpha-1)$$ has
    prime conductor $\p=(\pi)$ with $\pi=18\alpha-7$ of norm $1543$, and
    discriminant~$\pi^2$.
  \item
    $K=\Q(\sqrt{-43})=\Q(\alpha)$ where $\alpha^2-\alpha+11=0$:
    $$E:\quad y^2+y=x^3+(\alpha-1)x^2+(-\alpha-2)x+2$$ has
    prime conductor $\p=(\pi)$ with $\pi=29-2\alpha$ of norm $827$, and
    discriminant~$\pi^2$.
  \item
    $K=\Q(\sqrt{-67})=\Q(\alpha)$ where $\alpha^2-\alpha+17=0$:
    $$E:\quad y^2+y=x^3+(\alpha+1)x^2+2\alpha x+(\alpha-1)$$ has
    prime conductor $\p=(\pi)$ with $\pi=6\alpha-65$ of norm $4447$, and
    discriminant~$\pi^2$.
  \item
    $K=\Q(\sqrt{-163})=\Q(\alpha)$ where $\alpha^2-\alpha+41=0$:
    $$E:\quad y^2+y=x^3+(\alpha+1)x^2+(\alpha-18)x+(-3\alpha-4)$$ has
    prime conductor $\p=(\pi)$ with $\pi=47+6\alpha$ of norm $3967$, and
    discriminant~$\pi^2$.
\end{enumerate}

\bibliographystyle{plain}

\end{document}